\documentclass[11pt]{amsart}
\usepackage{amsmath,amssymb,amsthm}
\usepackage{graphicx,float,url,color}
\usepackage{mathrsfs}
\usepackage{enumitem}
\usepackage{soul}
\usepackage[normalem]{ulem}
\usepackage{cancel}
\usepackage{subcaption}
\usepackage[colorlinks=true]{hyperref}
\hypersetup{
	colorlinks=true,    
	linkcolor=blue,    
	citecolor=blue,    
	filecolor=magenta,   
	urlcolor=blue     
}

\usepackage[left=1in,right=1in,top=1in,bottom=1in]{geometry}
\usepackage{etoolbox}

\patchcmd{\section}{\scshape}{\bfseries}{}{}

\newcommand{\noperiod}[1]{}

\makeatletter
\renewcommand{\section}{\@startsection{section}{1}%
 \z@{.7\linespacing\@plus\linespacing}{.5\linespacing}%
 {\normalfont\large\bfseries\centering}} 
\renewcommand{\@secnumfont}{\bfseries} 
\makeatother

\def\R{\mathrm{I\kern-0.21emR}}
\def\N{\mathrm{I\kern-0.21emN}}

\newcommand{\E}{\mathbb{E}}

\newcommand{\Qbb}{\mathbf{Q}}
\newcommand{\W}{\mathbf{W}}
\newcommand{\Ent}{\operatorname{Ent}}
\newcommand{\dd}{\,\mathrm{d}}
\newcommand{\Prob}{\mathcal{P}}
\newcommand{\Law}{\operatorname{Law}}
\newcommand{\Cov}{\operatorname{Cov}_{\eta_\lambda}}
\newcommand{\Covtx}{\Cov(t,x)}  

\renewcommand{\geq}{\geqslant}
\renewcommand{\leq}{\leqslant}
\renewcommand{\Vert}{|}

\numberwithin{equation}{section}
\numberwithin{figure}{section}

\newtheorem{theorem}{Theorem}[section] 
\newtheorem{proposition}[theorem]{Proposition}
\newtheorem{corollary}[theorem]{Corollary}
\newtheorem{lemma}[theorem]{Lemma}
\theoremstyle{definition}\newtheorem{remark}[theorem]{Remark}
\newtheorem{assumption}[theorem]{Assumption}

\title[Optimal drift optimizer for non-convex optimization]{Optimal drift optimizer for non-convex optimization}

\author{Qin Li}
\address{Department of Mathematics, University of Wisconsin-Madison, Madison WI 53706, USA}
\email{qinli@math.wisc.edu}

\author{Sixu Li}
\address{Department of Mathematics, University of Wisconsin-Madison, Madison WI 53706, USA}
\email{sli739@wisc.edu}

\author{Eitan Tadmor}
\address{Department of Mathematics and Institute for Physical Science \& Technology\newline \hspace*{0.3cm} University of Maryland, College Park, USA}
\email{tadmor@umd.edu}

\author{Emmanuel Tr\'{e}lat}
\address{Sorbonne Universit\'e, Universit\'e Paris Cit\'e, CNRS, Inria\newline \hspace*{0.3cm} Laboratoire Jacques-Louis Lions, LJLL, F-75005 Paris, France}
\email{emmanuel.trelat@sorbonne-universite.fr}

\date{\today}

\subjclass{90C26, 35Q84, 49L20}

\keywords{Global optimization, optimal control, drift diffusion, Fokker-Planck equations}

\thanks{\textbf{Acknowledgment.} 
Research of Q. Li and S. Li was supported by NSF-DMS 2308440 and Vilas Associate Award. Research of E. Tadmor was supported by ONR grant N00014-2412659, by NSF grant DMS-2508407 and by the Fondation Sciences Math\'ematiques de Paris (FSMP) while being hosted by
LJLL at Sorbonne University. E. Tr\'elat acknowledges the support by FA8655-25-1-7469 of the EOARD-AFOSR}

\begin{document}
\begin{abstract}
We study a finite-horizon stochastic control criterion for non-convex optimization in which Brownian exploration is balanced against a quadratic control cost. Rather than emphasizing the classical Hopf--Cole representation, we isolate the exact drift selected by the criterion and reorganize it in a form adapted to optimization. The key object is the conditional terminal law of the optimal process. We show that this law is a Gibbs measure for a proximally penalized energy, yielding three exact representations of the drift: potential, averaged-gradient, and barycentric. We then analyze two asymptotic regimes relevant for optimization. As terminal time is approached, the drift recovers a scaled gradient-descent field. In the low-temperature regime, assuming a unique global minimizer, the conditional terminal law concentrates on it even in the presence of nonglobal local minima, and the drift converges to an affine attraction field toward it. In the nondegenerate case we also derive Laplace asymptotics for the drift, the value function, and the covariance of the conditional terminal law. Finally, we record a simple gradient-free discretization suggested by the barycentric formula.
\end{abstract}
\maketitle

\setcounter{tocdepth}{1}
\tableofcontents

\section{Introduction}\label{intro_sec}
We are interested in the global minimization problem
$$
x^* \in \operatorname*{argmin}_{x \in \R^d} f(x),
\qquad
f_* = \min_{x \in \R^d} f(x),
$$
for a possibly non-convex objective function $f : \R^d \to \R$.

The starting point of the paper is the finite-horizon stochastic control criterion
\begin{equation}\label{criterion_intro}
J^\lambda(u) = \E \bigg[ f(X_T) + \frac{\lambda}{2T}\int_0^T \Vert u_t \Vert^2 \dd t \bigg],
\quad
X_t = x_0 + \int_0^t u_s \dd s + \sqrt{2\beta}B_t, \quad u_s = \mathbf u(s,X_s).
\end{equation}
We use \emph{control}, \emph{drift}, and --- when speaking from the optimization viewpoint --- \emph{optimizer}, for the process $u$. In the deterministic warm-up below, $u$ is an open-loop control depending only on time. In the stochastic problem, the relevant objects will be time-dependent Markov feedbacks of the form $u_t = \mathbf u(t,X_t)$, which is standard terminology in finite-horizon control theory (see, e.g., \cite{FlemingSoner06, TrelatBook, YongZhou99}). Throughout the paper, $u_s$ denotes the realized admissible control process,  $\mathbf u(t,x)$ denotes a Markov feedback vector field, while $\mathbf u_\lambda^*$ will denote the optimal feedback selected by the finite-horizon criterion associated with $J^\lambda$ (see Theorem \ref{main_value} below).
We keep the expression ``optimal drift optimizer'' as a convenient shorthand for the feedback selected by \eqref{criterion_intro}, but always relative to this precise criterion. The paper does not claim a universal optimal algorithm for non-convex optimization. 

From the viewpoint of stochastic control, the analytic core of \eqref{criterion_intro} is classical. The logarithmic transform reducing the Hamilton-Jacobi-Bellman equation to a linear parabolic equation belongs to the linearly solvable control literature \cite{Kappen05,Todorov06,Todorov09}. Closely related exponential reweightings of Brownian motion appear in reciprocal diffusion and Schr\"odinger-F\"ollmer constructions \cite{DaiJiaoKangLuYang23, DaiPra91, Follmer85, HuangJiaoKangLiaoLiuLiu21, TzenRaginsky19, ZhangChen22}. The same forward-backward structure also appears in mean-field games \cite{BensoussanFrehseYam13, LasryLions07}. On the PDE and optimization side, related kernels have recently been interpreted through Wasserstein proximal operators and Hamilton-Jacobi regularizations \cite{HuangKaliseKouhkouh25,HeatonFungOsher24, LiLiuOsher23, OsherHeatonFung23}. The present viewpoint should also be compared with Gaussian homotopies, consensus-based optimization, and gradient-free integration methods \cite{AndrieuChopinFincatoGerber24, CarrilloChoiTotzeckTse18, FornasierKlockRiedl24, IwakiriWangItoTakeda22}.

The closed form itself is therefore not the main contribution of the paper. The point of the present note is to isolate the \emph{optimization content} of that classical formula in a short and coherent framework on $\R^d$. More precisely, the paper emphasizes the following facts.
\begin{enumerate}[label=(\roman*)]
\item The criterion \eqref{criterion_intro} fixes its own scales. It selects both the Gaussian variance $2\beta(T-t)$ and the effective temperature $\frac{2\lambda\beta}{T}$, instead of taking them as external schedules. This distinguishes the present construction from Gaussian homotopy and consensus-type methods, where the smoothing scale or the consensus temperature is prescribed by hand.
\item The stochastic problem is best read as a feedback optimal control problem, and the law-level Fokker-Planck formulation explains naturally why a feedback field appears. The deterministic open-loop toy problem already contains the proximal mechanism behind the whole construction.
\item The central object is the conditional terminal law of the optimal process. It is a Gibbs law for a penalized energy, and it simultaneously yields the three useful forms of the drift: potential, averaged-gradient, and barycentric.
\item The low-temperature regime $\lambda \to 0$ is the global-optimization regime. Under the sole assumption that the global minimizer is unique, with no exclusion of nonglobal local minima, the terminal law concentrates on that minimizer and the drift becomes an affine attraction toward it. The regime $t \to T$ is the local consistency regime: it shows that the same drift reconnects with gradient descent near the end of the horizon. The two limits are generically non-commutative, reflecting the tension between global exploration and local exploitation.
\end{enumerate}

The main text is written on $\R^d$ with an integrable Gibbs density. We do not mix a free diffusion on $\R^d$ with a hard state constraint encoded by setting $f = +\infty$ outside a bounded set. The bounded-domain framework, namely reflected diffusions and no-flux boundary conditions, is mentioned as a perspective in Section \ref{further_comments_sec}.

Before turning to the stochastic feedback problem, it is useful to begin with the deterministic open-loop toy model behind the whole construction.

\begin{proposition}[Deterministic warm-up]\label{warmup}
Fix $x_0 \in \R^d$, $T > 0$, and $\lambda > 0$. Consider
$$
\inf \bigg\{ f(x(T)) + \frac{\lambda}{2T}\int_0^T \Vert u(t) \Vert^2 \dd t \ \ \big\vert\ \ \dot x(t) = u(t),\ x(0) = x_0 \bigg\}.
$$
Then
\begin{equation}\label{optim_warmup}
\inf_{u \in L^2(0,T;\R^d)} \bigg\{ f(x(T)) + \frac{\lambda}{2T}\int_0^T \Vert u(t) \Vert^2 \dd t \bigg\}
= \inf_{y \in \R^d} \bigg\{ f(y) + \frac{\lambda}{2T^2}\Vert y - x_0 \Vert^2 \bigg\}.
\end{equation}
If the right-hand side admits a minimizer $y_\lambda(x_0)$, then an optimal control is the constant control
$$
u_\lambda(t) = \frac{y_\lambda(x_0) - x_0}{T},
\qquad
x_\lambda(t) = x_0 + \frac{t}{T}\big( y_\lambda(x_0) - x_0 \big).
$$
\end{proposition}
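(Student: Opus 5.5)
The plan is to split the infimum over controls according to the endpoint $y = x(T)$, and to show that for each fixed endpoint the cheapest control costs exactly the proximal penalty $\frac{\lambda}{2T^2}\Vert y-x_0\Vert^2$. Write $V$ for the left-hand side of \eqref{optim_warmup} and $W$ for the right-hand side. The argument then has three steps.

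\textbf{Step 1: Cauchy--Schwarz lower bound.} For any $u \in L^2(0,T;\R^d)$, the trajectory is $x(t) = x_0 + \int_0^t u(s)\dd s$, so $y := x(T)$ satisfies $y - x_0 = \int_0^T u(s)\dd s$. By Cauchy--Schwarz (or Jensen),
$$
\Vert y - x_0\Vert^2 \leq T\int_0^T \Vert u(s)\Vert^2 \dd s,
$$
with equality exactly when $u$ is a.e. constant. Consequently
$$
f(x(T)) + \frac{\lambda}{2T}\int_0^T \Vert u\Vert^2 \dd t \geq f(y) + \frac{\lambda}{2T^2}\Vert y-x_0\Vert^2 \geq W.
$$
Taking the infimum over $u$ gives $V \geq W$.

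\textbf{Step 2: Matching upper bound.} Fix any $y \in \R^d$ and take the constant control $u \equiv (y-x_0)/T$, which lies in $L^2$ and steers $x_0$ to $x(T) = y$. Its cost is
$$
f(y) + \frac{\lambda}{2T}\cdot T\cdot\frac{\Vert y-x_0\Vert^2}{T^2} = f(y) + \frac{\lambda}{2T^2}\Vert y-x_0\Vert^2.
$$
Taking the infimum over $y$ gives $V \leq W$. This holds even when $W = -\infty$, since no finiteness is needed in either direction.

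\textbf{Step 3: Optimality of the constant control.} If $y_\lambda(x_0)$ attains $W$, then the constant control $u_\lambda = (y_\lambda(x_0) - x_0)/T$ from Step 2 achieves cost exactly $W = V$, so it is optimal. Integrating $\dot x = u_\lambda$ gives the stated straight-line trajectory $x_\lambda(t) = x_0 + \frac{t}{T}\bigl(y_\lambda(x_0) - x_0\bigr)$.

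There is no real obstacle; the only point needing care is to phrase both inequalities without assuming the infima are finite or attained. As an optional addition, the equality case of Cauchy--Schwarz shows that every optimal control must be a.e. constant and end at a minimizer of the proximal problem.
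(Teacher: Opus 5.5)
Your proof is correct and follows the paper's argument: the Cauchy--Schwarz bound $\|y-x_0\|^2 \leq T\int_0^T \|u(t)\|^2\,\mathrm{d}t$, with equality exactly for constant controls, reduces the problem to minimizing over the endpoint $y$. Your version only spells out separately the two inequalities $V \geq W$ and $V \leq W$, which the paper combines into the single sentence ``minimizing over $y$ gives the result.''
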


\begin{proof}
Let $y = x(T)$. Since $y - x_0 = \int_0^T u(t)\dd t$, the Cauchy-Schwarz inequality gives
$$
\int_0^T \Vert u(t) \Vert^2 \dd t \geq \frac{1}{T}\bigg\Vert \int_0^T u(t)\dd t \bigg\Vert^2 = \frac{1}{T}\Vert y - x_0 \Vert^2,
$$
with equality if and only if $u$ is constant. Therefore, among all trajectories ending at $y$, the minimal kinetic cost equals $\frac{\lambda}{2T^2}\Vert y - x_0 \Vert^2$. Minimizing over $y$ gives the result.
\end{proof}

Proposition \ref{warmup} already shows what the quadratic running cost does: it turns the endpoint optimization into a proximal penalization. Equivalently, the right-hand side of \eqref{optim_warmup} is the Moreau envelope of $f$ with parameter $\mu = T^2/\lambda$, evaluated at $x_0$. The stochastic problem below is its noisy feedback analogue, with a terminal law instead of a single terminal point.

The same warm-up already contains the basic low-$\lambda$ intuition.

\begin{corollary}[Low-$\lambda$ limit in the warm-up]\label{warmup_small_lambda}
Assume that $f$ is continuous, bounded from below, coercive, and has a unique minimizer $x^*$. Let $y_\lambda(x_0)$ be any minimizer of the problem \eqref{optim_warmup}. Then $y_\lambda(x_0) \to x^*$ as $\lambda \downarrow 0$. Consequently, $u_\lambda(t) \to \frac{x^* - x_0}{T}$ for every $t \in [0,T]$.
\end{corollary}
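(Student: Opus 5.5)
The plan is a standard compactness-plus-uniqueness argument for the proximal problem in \eqref{optim_warmup}. Write $\mu = T^2/\lambda$ and $F_\lambda(y) = f(y) + \frac{\lambda}{2T^2}\Vert y - x_0\Vert^2$. Let $y_\lambda = y_\lambda(x_0)$ be any minimizer of $F_\lambda$; existence follows from continuity and coercivity of $f$, though the statement assumes it anyway.

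The first step is an upper bound obtained by comparing with $x^*$. Minimality gives
$$
f(y_\lambda) + \frac{\lambda}{2T^2}\Vert y_\lambda - x_0\Vert^2 \leq f_* + \frac{\lambda}{2T^2}\Vert x^* - x_0\Vert^2 .
$$
Dropping the nonnegative penalty on the left yields $f_* \leq f(y_\lambda) \leq f_* + \frac{\lambda}{2T^2}\Vert x^*-x_0\Vert^2$. Hence $f(y_\lambda) \to f_*$ as $\lambda \downarrow 0$.

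The second step shows boundedness of the family. For $\lambda \leq 1$, the values $f(y_\lambda)$ lie below $f_* + \frac{1}{2T^2}\Vert x^*-x_0\Vert^2$. Coercivity means the sublevel sets of $f$ are bounded, so $\{y_\lambda\}_{\lambda\leq 1}$ is contained in a fixed compact set $K$.

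The third step identifies the limit. Argue by contradiction: if $y_\lambda \not\to x^*$, there exist $\varepsilon>0$ and a sequence $\lambda_n\downarrow 0$ with $\Vert y_{\lambda_n}-x^*\Vert\geq\varepsilon$. Extract a convergent subsequence $y_{\lambda_n}\to \bar y\in K$, which still satisfies $\Vert\bar y - x^*\Vert\geq\varepsilon$. Continuity of $f$ gives $f(\bar y)=\lim f(y_{\lambda_n}) = f_*$, so $\bar y$ is a global minimizer. Uniqueness then forces $\bar y = x^*$, which is a contradiction.

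The consequence for the control is immediate from Proposition \ref{warmup}: $u_\lambda(t) = (y_\lambda(x_0)-x_0)/T \to (x^*-x_0)/T$, uniformly in $t$ since the control is constant in time. There is no serious obstacle. The only points needing care are, first, that the boundedness in the second step comes from coercivity applied to the sublevel set rather than from the vanishing penalty, and second, that uniqueness of the minimizer is what upgrades subsequential convergence to convergence of the whole family, for any selection of minimizers.
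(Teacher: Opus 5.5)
Your proof is correct and follows essentially the same route as the paper: you compare with $x^*$ to get $f(y_\lambda) \leq f_* + \frac{\lambda}{2T^2}\Vert x^*-x_0\Vert^2$, use coercivity for boundedness, and identify subsequential limits via continuity and uniqueness. Your contradiction argument spells out the passage from subsequential to full convergence, which the paper states in one line.
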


\begin{proof}
For every $\lambda > 0$,
$$
f\big( y_\lambda(x_0) \big) + \frac{\lambda}{2T^2}\Vert y_\lambda(x_0) - x_0 \Vert^2 \leq f(x^*) + \frac{\lambda}{2T^2}\Vert x^* - x_0 \Vert^2.
$$
Hence
$$
f\big( y_\lambda(x_0) \big) \leq f_* + \frac{\lambda}{2T^2}\Vert x^* - x_0 \Vert^2.
$$
By coercivity, the family $\{ y_\lambda(x_0) \}_{\lambda > 0}$ is bounded. Let $\lambda_n \downarrow 0$ and assume $y_{\lambda_n}(x_0) \to \bar y$ along a subsequence. Passing to the limit in the previous inequality gives $f(\bar y) \leq f_*$. Hence $\bar y = x^*$ by uniqueness of the minimizer. Every convergent subsequence has the same limit, so the whole family converges to $x^*$.
\end{proof}

The deterministic warm-up is open-loop. The finite-horizon optimizer selected by \eqref{criterion_intro} will instead be a time-dependent Markov feedback. 

\medskip

The rest of the paper is organized as follows. Section \ref{control_sec} solves the finite-horizon optimal control problem and identifies the exact terminal Gibbs law and the three equivalent forms of the drift. Section \ref{asymp_sec} contains the asymptotic analysis: the regime $t \to T$ explains the local gradient behavior, while the regime $\lambda \to 0$ gives the global-selection mechanism and the Laplace asymptotics. Section \ref{numerics_sec} records a gradient-free discretization. Section \ref{further_comments_sec} concludes with further comments and perspectives.

\section{Optimal control and exact drift}\label{control_sec}
This section solves the finite-horizon optimal control problem associated with \eqref{criterion_intro}, more generally with the translated criterion 
$J^\lambda_{t,x}$ (defined by \eqref{def_Jtx} further) starting from an arbitrary time-space point $(t,x)$. We record both the Hamilton-Jacobi-Bellman and the law-level Fokker-Planck formulations, and we identify the conditional terminal law that carries the whole drift. The closed form itself is classical; what matters here is the way it is reorganized for optimization.

\subsection{Fokker-Planck formulation and Gibbs objects}

We fix throughout two parameters $T > 0$ and $\beta > 0$.

\begin{assumption}\label{basic_ass}
The objective function $f : \R^d \to \R$ satisfies the following conditions:
\begin{enumerate}[label=\roman*)]
\item $f \in C^2(\R^d)$, $f$ is bounded from below, and $f(x) \to +\infty$ as $\Vert x \Vert \to +\infty$;
\item for every $c > 0$, the functions $e^{-cf}$, $\Vert \nabla f \Vert e^{-cf}$, and\footnote{Here $\| \cdot \|$ denotes the usual induced matrix norm.} $\| D^2 f \| e^{-cf}$ belong to $L^1(\R^d)$.
\end{enumerate}

\end{assumption}

Assumption \ref{basic_ass} keeps the whole discussion on $\R^d$ and avoids artificial hard walls. It is satisfied, for instance, by coercive $C^2$ functions whose first two derivatives have at most polynomial growth. The low-temperature concentration arguments of Section \ref{asymp_sec} use less than this, but these assumptions are convenient for the exact formulas. The bounded-domain case with reflection is mentioned as a perspective in Section \ref{further_comments_sec}.

On a filtered probability space carrying a $d$-dimensional Brownian motion $B$, an admissible control on $[t,T]$ from $x$ is a progressively measurable process $(u_s)_{s \in [t,T]}$ with values in $\R^d$ such that $\displaystyle \E \int_t^T \Vert u_s \Vert^2 \dd s < +\infty$.
Given such a control, we set
$$
X_s = x + \int_t^s u_r \dd r + \sqrt{2\beta}(B_s - B_t),
\qquad
s \in [t,T],
$$
and
\begin{equation}\label{def_Jtx}
J^\lambda_{t,x}(u) = \E \bigg[ f(X_T) + \frac{\lambda}{2T}\int_t^T \Vert u_s \Vert^2 \dd s \bigg].
\end{equation}
A Markov control, also called a time-dependent feedback control in the finite-horizon setting, is a control of the special form $u_s = \mathbf u(s,X_s)$ for some Borel map $\mathbf u : [t,T] \times \R^d \to \R^d$ (see \cite{FlemingSoner06, YongZhou99}).

If the control is Markov and $\mathbf u$ is regular enough, then the law $\rho(s,\cdot)$ of $X_s$ solves the Fokker-Planck equation
\begin{equation}\label{FP}
\partial_s \rho + \nabla \cdot \big( \rho \, \mathbf u \big) = \beta \Delta \rho
\end{equation}
in the distributional sense; under smoothness assumptions the derivation is classical, see, for example, \cite[Chapter 6]{Pavliotis14} or \cite[Chapters 4 and 5]{Risken96}. Consequently, the criterion $J^\lambda_{t,x}$ may be rewritten at the level of densities as
\begin{equation}\label{FP_control}
\inf_{\mathbf u} \bigg\{ \int_{\R^d} f(y)\rho(T,y)\dd y + \frac{\lambda}{2T}\int_t^T \int_{\R^d} \Vert \mathbf u(s,y) \Vert^2 \rho(s,y)\dd y\dd s \bigg\},
\end{equation}
subject to the dynamical constraint \eqref{FP}. This formulation is the natural feedback analogue of Proposition \ref{warmup}.
Thus, starting from an initial density $\rho(t,\cdot) = \rho_0$, one seeks a feedback vector field $\mathbf u$ and an associated path of probability densities $\rho(s,\cdot)$ solving \eqref{FP}. The density is not an additional modeling assumption; it is the law of the controlled diffusion and therefore records the evolution of mass in configuration space. In the global-optimization regime described in Section \ref{asymp_sec}, the terminal mass is expected to concentrate near $x^*$. This is the sense in which the drift selected by \eqref{FP}--\eqref{FP_control} may be called an optimal drift optimizer.

\subsection{Exact value function and optimal feedback}

We begin with standard notation. For $r > 0$, let
$$
G_r(z) = \frac{1}{(4\pi \beta r)^{d/2}} \exp \bigg( - \frac{\Vert z \Vert^2}{4\beta r} \bigg)
$$
denote the heat kernel with diffusivity $\beta$.
Fix $\lambda > 0$ and let 
\begin{equation*}
\pi_\lambda(y) = \frac{1}{M_\lambda} e^{-\alpha_\lambda f(y)}, \qquad \alpha_\lambda = \frac{T}{2\lambda\beta},
\quad
M_\lambda = \int_{\R^d} e^{-\alpha_\lambda f(y)}\dd y.
\end{equation*}
$\pi_\lambda$ denotes the normalized Gibbs density associated with $f$ at inverse temperature $\alpha_\lambda$, or equivalently at effective temperature $\frac{2\lambda\beta}{T}$; see, for example, \cite[Chapter 2]{DemboZeitouni}. In the present setting these quantities are not inserted by hand: they are selected by the Hopf-Cole transform of the HJB equation, as in the linearly solvable control literature \cite{Kappen05, Todorov06, Todorov09}, and they also arise from the static free-energy problem
\begin{equation*}
- \frac{2\lambda\beta}{T}\log M_\lambda
= \inf \bigg\{ \int_{\R^d} f(y)\rho(y)\dd y + \frac{2\lambda\beta}{T}\int_{\R^d} \rho(y)\log \rho(y)\dd y \ \mid\ \rho \geq 0,\ \int_{\R^d} \rho(y)\dd y = 1 \bigg\},
\end{equation*}
whose unique minimizer is $\pi_\lambda$ (see \cite[Chapter 4]{DemboZeitouni}). In this sense $\pi_\lambda$ is the Gibbs law that balances objective value and entropy at temperature $\frac{2\lambda\beta}{T}$.

Finally, for $0 \leq t < T$ and $x \in \R^d$, we define

\begin{equation}\label{def_h_phi}
h_\lambda(t,x) = (G_{T-t} * \pi_\lambda)(x),
\qquad
\Phi_\lambda(t,x) = - 2\beta \log h_\lambda(t,x),
\end{equation}
and
\begin{equation}\label{def_V_lambda}
V_\lambda(t,x) = \frac{\lambda}{T}\Phi_\lambda(t,x) - \frac{2\lambda\beta}{T}\log M_\lambda.
\end{equation}
Equivalently,
\begin{equation}\label{direct_value_formula}
V_\lambda(t,x) = - \frac{2\lambda\beta}{T}\log \int_{\R^d} G_{T-t}(x-y)e^{-\alpha_\lambda f(y)}\dd y.
\end{equation}
\begin{remark}[Role of $T$ and $\beta$]\label{roleT}
Formula \eqref{direct_value_formula} makes the role of $T$ completely explicit. The Gaussian factor has covariance $2\beta(T-t)I_d$, so the typical radius of the terminal averaging at time $t$ is $\sqrt{2\beta(T-t)}$. This is the time-dependent search radius. At the same time, the Gibbs factor uses inverse temperature $\alpha_\lambda = \frac{T}{2\lambda\beta}$. Hence increasing $T$ simultaneously enlarges the spatial search region and sharpens the Gibbs concentration. With the present normalization, the limit $T \to +\infty$ is therefore not a neutral passage to a larger horizon but a different optimal control problem. An infinite-horizon theory would require a differently scaled criterion, typically discounted or ergodic. By a time rescaling one may normalize $\beta = 1$, but keeping $\beta$ explicit makes both scales visible.
\end{remark}

\begin{theorem}\label{main_value}
Under Assumption \ref{basic_ass}, the function $V_\lambda$ defined by \eqref{def_V_lambda} belongs to $C^{1,2}([0,T) \times \R^d) \cap C([0,T] \times \R^d)$ and is the classical solution of the backward viscous eikonal equation
\begin{equation}\label{hjb}
\partial_t V_\lambda = \frac{T}{2\lambda}\Vert \nabla V_\lambda \Vert^2 - \beta \Delta V_\lambda
\quad
\text{on } [0,T) \times \R^d,
\end{equation}
with terminal condition $V_\lambda(T,x) = f(x)$. Moreover,
$$
V_\lambda(t,x) = \inf_u J^\lambda_{t,x}(u),
$$
where the infimum is taken over all admissible controls. The optimal control is the Markov feedback
\begin{equation}\label{optimal_feedback_formula}
\mathbf u_\lambda^*(t,x) = - \frac{T}{\lambda}\nabla V_\lambda(t,x) = - \nabla \Phi_\lambda(t,x) = 2\beta \nabla \log h_\lambda(t,x).
\end{equation}
\end{theorem}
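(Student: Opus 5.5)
The plan is the standard Hopf--Cole route followed by a verification argument.

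\emph{Step 1: the linear equation.} Set $w(t,x) = \int_{\R^d} G_{T-t}(x-y)e^{-\alpha_\lambda f(y)}\dd y = M_\lambda h_\lambda(t,x)$. By Assumption \ref{basic_ass}, $e^{-\alpha_\lambda f}\in L^1\cap L^\infty$, so $w$ is smooth on $[0,T)\times\R^d$ and strictly positive. It solves the backward heat equation $\partial_t w + \beta\Delta w = 0$, with $w(t,\cdot)\to e^{-\alpha_\lambda f}$ locally uniformly as $t\to T$ because $e^{-\alpha_\lambda f}$ is continuous and bounded.

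By \eqref{direct_value_formula}, $V_\lambda = -\frac{1}{\alpha_\lambda}\log w$. Using $\nabla w = -\alpha_\lambda w\nabla V_\lambda$ and $\Delta w = w(\alpha_\lambda^2\Vert\nabla V_\lambda\Vert^2 - \alpha_\lambda\Delta V_\lambda)$, we get
\[
\partial_t V_\lambda = \beta\alpha_\lambda\Vert\nabla V_\lambda\Vert^2 - \beta\Delta V_\lambda .
\]
Since $\beta\alpha_\lambda = T/(2\lambda)$, this is exactly \eqref{hjb}. The regularity $C^{1,2}$ on $[0,T)$ and continuity up to $t=T$, with $V_\lambda(T,\cdot)=f$, follow from positivity and local uniform convergence of $w$. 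The chain of identities in \eqref{optimal_feedback_formula} is immediate from \eqref{def_h_phi}--\eqref{def_V_lambda}: $\frac{T}{\lambda}\nabla V_\lambda = \nabla\Phi_\lambda = -2\beta\nabla\log h_\lambda$. Note also that \eqref{hjb} can be written as $\partial_tV_\lambda+\beta\Delta V_\lambda+\min_{a}\{a\cdot\nabla V_\lambda+\frac{\lambda}{2T}\Vert a\Vert^2\}=0$, with the minimum attained at $a=-\frac{T}{\lambda}\nabla V_\lambda$.

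\emph{Step 2: verification inequality.} Fix an admissible $u$ and apply Itô's formula to $V_\lambda(s,X_s)$ on $[t,T-\varepsilon]$, stopped at the exit time $\tau_R$ of a ball of radius $R$. The HJB equation gives $\partial_sV_\lambda+\beta\Delta V_\lambda+u_s\cdot\nabla V_\lambda \geq -\frac{\lambda}{2T}\Vert u_s\Vert^2$. Taking expectations kills the stopped martingale and yields
\[
V_\lambda(t,x)\leq \E\Big[V_\lambda(T-\varepsilon\wedge\tau_R,X_{\cdot})+\tfrac{\lambda}{2T}\int_t^{\cdot}\Vert u_s\Vert^2\dd s\Big].
\]
To pass $R\to\infty$ and $\varepsilon\to0$, I need a lower bound on $V_\lambda$: since $f\geq f_{\min}$, we have $w\leq e^{-\alpha_\lambda f_{\min}}$, so $V_\lambda\geq f_{\min}$. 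I also need growth control from above. By Jensen's inequality applied to the Gaussian measure, $V_\lambda(t,x)\leq \E[f(x+\sqrt{2\beta(T-t)}Z)]$. Under polynomial-type integrability this is enough to use Fatou's lemma and dominated arguments together with the continuity of $V_\lambda$ up to $T$. This gives $V_\lambda\leq J^\lambda_{t,x}(u)$.

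This limit passage is the delicate point. If $\E f(X_T)=+\infty$ the inequality is trivial. Otherwise I would use Fatou's lemma on the lower-bounded quantity $V_\lambda - f_{\min}$, along a subsequence, for the $\varepsilon\to0$ limit, combined with $V_\lambda(s,X_s)\to f(X_T)$ almost surely. Note that Fatou goes the wrong way here. So I would instead write the Itô identity with equality and handle the upper side via uniform integrability, using the Jensen bound and the fact that $\E\sup_s\Vert X_s\Vert^2<\infty$.

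\emph{Step 3: optimality of the feedback.} For $\mathbf u_\lambda^*=2\beta\nabla\log h_\lambda$, the cleanest route is Girsanov. The process $h_\lambda(s,x+\sqrt{2\beta}(B_s-B_t))/h_\lambda(t,x)$ is a positive martingale on $[t,T)$ with terminal value $e^{-\alpha_\lambda f}/w(t,x)$; it is a true martingale since $e^{-\alpha_\lambda f}$ is bounded. Under the tilted measure, the canonical process solves the SDE with drift $\mathbf u_\lambda^*$. This gives a weak solution with explicit terminal law $\propto G_{T-t}(x-y)e^{-\alpha_\lambda f(y)}$, and relative entropy $\E\frac{1}{4\beta}\int\Vert u^*\Vert^2 = \E[-\alpha_\lambda f(X_T)] - \log w(t,x)$. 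Rearranging gives exactly
\[
J^\lambda_{t,x}(\mathbf u_\lambda^*)=-\tfrac1{\alpha_\lambda}\log w(t,x)=V_\lambda(t,x).
\]
In particular $\mathbf u_\lambda^*$ is admissible, since its energy is finite thanks to the $L^1$ hypotheses on $f e^{-cf}$ implied by Assumption \ref{basic_ass}(ii) and coercivity. Combined with Step 2, this proves $V_\lambda=\inf_u J^\lambda_{t,x}$ and the optimality of $\mathbf u_\lambda^*$.

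The main obstacle is the integrability near $t=T$, where $\nabla\log h_\lambda$ may blow up. The Girsanov/entropy identity sidesteps this for the optimal control; for the verification inequality I would use the Gibbs variational (Donsker–Varadhan) inequality as an alternative to Itô's formula. That inequality reads $-\log\E e^{-\alpha_\lambda f(Y)}\leq \E_Q[\alpha_\lambda f]+H(Q|P)$, where $P$ is Wiener measure and $Q$ is the law of the controlled path. Since $H(Q|P)\leq\frac1{4\beta}\E\int\Vert u\Vert^2$, this gives the inequality directly with no boundary issues.
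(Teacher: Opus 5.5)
Your Step 1 is the paper's Hopf--Cole computation, but your verification step takes a different route. Once \eqref{hjb} holds classically with the explicit minimizer $-\frac{T}{\lambda}\nabla V_\lambda$, the paper simply invokes the textbook verification theorem for controlled diffusions. You instead prove both halves by entropy arguments on path space:
\begin{itemize}
\item The lower bound $V_\lambda(t,x)\le J^\lambda_{t,x}(u)$ for every admissible $u$ comes from the Gibbs variational inequality combined with $\Ent(Q\,\vert\,P)\le\frac{1}{4\beta}\E\int_t^T\Vert u_s\Vert^2\dd s$ and $\frac{1}{\alpha_\lambda}\cdot\frac{1}{4\beta}=\frac{\lambda}{2T}$.
\item Attainment comes from the Doob $h$-transform, i.e.\ the Girsanov tilt by the closed martingale $h_\lambda(s,x+\sqrt{2\beta}(B_s-B_t))/h_\lambda(t,x)$. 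Its entropy identity gives $J^\lambda_{t,x}(\mathbf u_\lambda^*)=-\frac{1}{\alpha_\lambda}\log w(t,x)=V_\lambda(t,x)$ exactly.
\end{itemize}
What this buys is robustness. The verification theorems the paper cites need growth conditions, such as polynomial growth of the value function and well-posedness and moment bounds for the closed-loop SDE. These are neither checked nor automatic under Assumption \ref{basic_ass}, which lets $f$ grow faster than any polynomial while $\mathbf u_\lambda^*(t,\cdot)\to-\frac{T}{\lambda}\nabla f$ as $t\uparrow T$. Your argument needs no growth hypothesis. It also yields for free the path-space formulation of Remark \ref{path_space_remark} and the terminal law of Lemma \ref{kernel_lemma}. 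The paper's route is shorter, and its HJB-plus-verification template applies beyond the linearly solvable setting.

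Drop your It\^o-with-localization paragraph rather than keeping it as the main line. As you noticed, Fatou goes the wrong way. Moreover, uniform integrability of $V_\lambda(\tau,X_\tau)$ cannot be extracted from the Jensen bound and $\E\sup_s\Vert X_s\Vert^2<\infty$ once $f$ grows faster than quadratically, which Assumption \ref{basic_ass} allows. The Donsker--Varadhan step is what actually closes the argument.

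The two entropy facts you use are standard but need localization:
\begin{itemize}
\item The bound $\Ent(Q\,\vert\,P)\le\frac{1}{4\beta}\E\int\Vert u\Vert^2$, for controls progressively measurable in a possibly larger filtration, follows from a localized Girsanov change of measure plus the data-processing inequality.
\item The identity $\Ent(Q^*\,\vert\,P)=\frac{1}{4\beta}\E_{Q^*}\int\Vert\mathbf u_\lambda^*\Vert^2$ requires stopping times, since $\nabla\log h_\lambda$ is unbounded.
\end{itemize}
Finally, Girsanov only produces a weak solution. To realize the feedback on the given filtered space, combine local Lipschitz continuity of $\mathbf u_\lambda^*$ (for pathwise uniqueness) with non-explosion inherited from $Q^*\ll P$.
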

\noindent
The notation of $\mathbf u_\lambda^*$ as the optimal feedback will be used consistently below.

\begin{proof}
Since $\pi_\lambda \in C^2(\R^d) \cap L^1(\R^d)$, the heat convolution $h_\lambda$ belongs to $C^\infty([0,T) \times \R^d)$ and satisfies
\begin{equation}\label{h_lambda_heat}
\partial_t h_\lambda + \beta \Delta h_\lambda = 0
\qquad
\text{on } [0,T) \times \R^d,
\end{equation}
with terminal trace $h_\lambda(T,\cdot) = \pi_\lambda$. Hence
$$
\nabla V_\lambda = - \frac{2\lambda\beta}{T}\frac{\nabla h_\lambda}{h_\lambda},
\qquad
\Delta V_\lambda = - \frac{2\lambda\beta}{T}\frac{\Delta h_\lambda}{h_\lambda} + \frac{2\lambda\beta}{T}\frac{\Vert \nabla h_\lambda \Vert^2}{h_\lambda^2}.
$$
Using \eqref{h_lambda_heat}, we obtain \eqref{hjb}. Since $h_\lambda(t,\cdot) \to \pi_\lambda$ locally uniformly as $t \uparrow T$, the definition of $V_\lambda$ yields
$$
V_\lambda(T,x) = - \frac{2\lambda\beta}{T}\log \pi_\lambda(x) - \frac{2\lambda\beta}{T}\log M_\lambda = f(x).
$$
The Hamiltonian of the finite-horizon optimal control problem is (see, e.g., \cite{PontryaginBook, TrelatBook, YongZhou99})
$$
H(x,p,u) = u \cdot p+ \frac{\lambda}{2T}\Vert u \Vert^2 .
$$
For each $(x,p)$ it is minimized at $u = - \frac{T}{\lambda}p$, and the minimum value equals $- \frac{T}{2\lambda}\Vert p \Vert^2$. Therefore the HJB equation is \eqref{hjb}, and the minimizing selector is \eqref{optimal_feedback_formula}. Since $V_\lambda$ is a classical solution and the minimizer is explicit, the verification theorem for finite-horizon controlled diffusions applies (see \cite[Chapter IV]{FlemingSoner06} or \cite[Chapter 4]{YongZhou99}), hence $V_\lambda$ is the value function and \eqref{optimal_feedback_formula} is an optimal feedback.
\end{proof}

Theorem \ref{main_value} is classical in several neighboring literatures. The contribution of the present paper is not the existence of the closed form itself. What is specific here is the optimization reading of the same formula: the deterministic warm-up, the feedback interpretation through the Fokker-Planck equation and the Pontryagin maximum principle, the explicit conditional terminal law, the endogenous schedules set by the criterion, and the low-$\lambda$ asymptotics stated in a form directly relevant for global optimization.

\begin{remark}[Regularized potential]\label{regularized_potential}
The potential $\Phi_\lambda(t,\cdot) = - 2\beta \log \big( G_{T-t} * \pi_\lambda \big)$ is a heat-regularized Gibbs landscape. The Gaussian scale is $\sqrt{2\beta(T-t)}$, so earlier times correspond to broader smoothing. Proposition \ref{terminal_limit} further shows that this regularized landscape recovers $f$ as $t \uparrow T$. This is the natural connection with Gaussian homotopies and Hamilton-Jacobi regularizations, with the important difference that the present variance schedule is selected by \eqref{criterion_intro} rather than prescribed externally.
\end{remark}

\begin{remark}[Monotonicity in $\lambda$]\label{monotonicity_lambda}
The value function $V_\lambda(t,x)$ is nondecreasing in $\lambda$ for each fixed $(t,x)$. 
In particular, the two limiting values match the monotonicity: as $\lambda \downarrow 0$ the value function decreases to $f_*$ (Theorem \ref{small_lambda}), and as $\lambda \to +\infty$ the control cost forces $u \approx 0$, so $V_\lambda(t,x) \uparrow (G_{T-t} * f)(x)$, the free-diffusion average of $f$.
\end{remark}

\begin{remark}\label{pmp_remark}
The same feedback law also emerges from an optimal control problem posed directly on the density. Consider \eqref{FP_control} subject to \eqref{FP}. Applying the Pontryagin maximum principle (see \cite{LiYong95, TrelatBook}) on the infinite-dimensional state space of densities, with adjoint field $p(s,y)$, gives the adjoint equation
$$
- \partial_s p = \mathbf u \cdot \nabla p + \beta \Delta p + \frac{\lambda}{2T}\Vert \mathbf u \Vert^2,
\qquad
p(T,\cdot) = f,
$$
and pointwise minimization of the Hamiltonian yields $\mathbf u = - \frac{T}{\lambda}\nabla p$. Substituting back gives
$$
\partial_s p = \frac{T}{2\lambda}\Vert \nabla p \Vert^2 - \beta \Delta p,
$$
which is the backward Hamilton-Jacobi equation \eqref{hjb}. Above, we have used HJB and verification for the derivation in the present finite-dimensional diffusion setting, but the Pontryagin maximum principle could have been applied as well. Conceptually, it explains why the feedback formula is natural once the state variable is the whole density rather than a single trajectory.
\end{remark}

\subsection{Conditional terminal law and three equivalent formulas}

The next step is to identify the optimal Markov process and its terminal conditional law. This is where the optimization mechanism becomes transparent.

\medskip
\paragraph{\bf The optimal process as a Doob transform.}

\begin{lemma}\label{kernel_lemma}
For $0 \leq t < s \leq T$ define
$$
p_\lambda^*(t,x;s,y) = \frac{G_{s-t}(y-x) h_\lambda(s,y)}{h_\lambda(t,x)}.
$$
Then $p_\lambda^*$ is a Markov transition kernel. The associated Markov process has infinitesimal generator
$$
\mathcal L_t^* \varphi(x) = \beta \Delta \varphi(x) + \mathbf u_\lambda^*(t,x) \cdot \nabla \varphi(x),
$$
where $\mathbf u_\lambda^*$ is the feedback from Theorem \ref{main_value}. In particular, this process realizes the optimal controlled dynamics.
At terminal time, we have
\begin{equation}\label{terminal_eta}
p_\lambda^*(t,x;T,y) = \eta_{\lambda,t,x}(y),
\qquad
\eta_{\lambda,t,x}(y) = \frac{G_{T-t}(x-y)\pi_\lambda(y)}{h_\lambda(t,x)}.
\end{equation}
\end{lemma}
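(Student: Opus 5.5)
This is a Doob $h$-transform of the heat semigroup by the space-time harmonic function $h_\lambda$. The plan is to verify the kernel properties directly, then compute the generator, then read off the terminal case.

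\textbf{Step 1: $p_\lambda^*$ is a transition kernel.}
\begin{itemize}
\item \emph{Positivity.} Since $\pi_\lambda>0$, we have $h_\lambda>0$ on $[0,T)\times\R^d$, and $h_\lambda(T,\cdot)=\pi_\lambda>0$. Hence $p_\lambda^*>0$.
\item \emph{Normalization.} Use the semigroup property $G_{s-t}*G_{T-s}=G_{T-t}$ to get
$$
\int_{\R^d} G_{s-t}(y-x)h_\lambda(s,y)\dd y=(G_{s-t}*G_{T-s}*\pi_\lambda)(x)=h_\lambda(t,x).
$$
For $s=T$ this is the definition of $h_\lambda$. Dividing by $h_\lambda(t,x)$ gives total mass one.
\item \emph{Chapman--Kolmogorov.} For $t<r<s$, the factor $h_\lambda(r,z)$ cancels in the product of the two kernels. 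The relation then reduces to $\int G_{r-t}(z-x)G_{s-r}(y-z)\dd z=G_{s-t}(y-x)$.
\end{itemize}
Measurability and Fubini are justified by positivity and $\pi_\lambda\in L^1$.

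\textbf{Step 2: the generator.} Fix $\varphi\in C_c^\infty(\R^d)$ and write
$$
\frac{1}{h_\lambda(t,x)}\,\E_x\big[\varphi(x+\sqrt{2\beta}B_{\delta})\,h_\lambda(t+\delta,x+\sqrt{2\beta}B_\delta)\big],
$$
which expresses the action of $p_\lambda^*(t,x;t+\delta,\cdot)$ on $\varphi$. Differentiate at $\delta=0^+$. The heat generator $\beta\Delta$ acts on the product $\varphi h_\lambda$, and the time derivative acts on $h_\lambda$. This gives
$$
\frac{\beta\Delta(\varphi h_\lambda)+\varphi\,\partial_t h_\lambda}{h_\lambda}
=\beta\Delta\varphi+2\beta\frac{\nabla h_\lambda}{h_\lambda}\cdot\nabla\varphi+\varphi\,\frac{\partial_t h_\lambda+\beta\Delta h_\lambda}{h_\lambda}.
$$
The last term vanishes by the backward heat equation \eqref{h_lambda_heat}. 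The drift is $2\beta\nabla\log h_\lambda=\mathbf u_\lambda^*$ by \eqref{optimal_feedback_formula}.

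Rigor here needs $h_\lambda\in C^\infty([0,T)\times\R^d)$, which holds as in the proof of Theorem \ref{main_value}, together with Taylor expansion of the smooth function $\varphi h_\lambda(t+\delta,\cdot)$. Only local bounds are needed because $\varphi$ has compact support.

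\textbf{Step 3: identification with the optimal dynamics.} Equivalently, check that $\partial_s p_\lambda^* = \beta\Delta_y p_\lambda^* - \nabla_y\cdot(p_\lambda^*\mathbf u_\lambda^*)$ in $(s,y)$. Then $p_\lambda^*$ solves the Fokker--Planck equation \eqref{FP} with drift $\mathbf u_\lambda^*$ and initial Dirac mass at $x$.

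\begin{itemize}
\item \emph{Main obstacle: uniqueness.} Concluding that this is the law of the SDE $\dd X=\mathbf u_\lambda^*\dd s+\sqrt{2\beta}\dd B$ requires uniqueness for that SDE, or for \eqref{FP}, on $[t,T)$. The drift $\mathbf u_\lambda^*$ is smooth, hence locally Lipschitz on $[t,T-\epsilon]\times\R^d$, so strong solutions exist up to explosion. Non-explosion follows because the $h$-transformed process has a genuine probability kernel: mass is conserved by Step 1. Alternatively, invoke Girsanov with density $h_\lambda(s,X_s)/h_\lambda(t,x)$, which is a positive martingale under Brownian motion with mean one. This identifies the law on each $[t,s]$ and, by letting $s\uparrow T$, on $[t,T]$. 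Optimality then follows from Theorem \ref{main_value}.
\item \emph{Terminal case.} Setting $s=T$ with $h_\lambda(T,\cdot)=\pi_\lambda$ and $G$ even gives \eqref{terminal_eta} immediately.
\end{itemize}
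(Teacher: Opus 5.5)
Your proposal is correct and follows essentially the same route as the paper: positivity, normalization via the heat semigroup property, Chapman--Kolmogorov through cancellation of $h_\lambda(r,z)$, the generator obtained by differentiating $\frac{1}{h_\lambda(t,x)}\big(G_{s-t}*(h_\lambda(s,\cdot)\varphi)\big)(x)$ at $s=t$ and using \eqref{h_lambda_heat}, and the terminal formula by setting $s=T$. Your Step 3 (the forward Fokker--Planck check and the identification with the SDE) justifies the claim that the process realizes the optimal dynamics, which the paper states without argument; of your two non-explosion arguments, only the Girsanov one with the martingale density $h_\lambda(s,X_s)/h_\lambda(t,x)$ closes the point, because mass conservation of $p_\lambda^*$ alone says nothing about the SDE's possibly defective kernel without a separate uniqueness statement.
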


\begin{proof}
The positivity of $p_\lambda^*$ is obvious. Moreover,
\begin{multline*}
\int_{\R^d} p_\lambda^*(t,x;s,y)\dd y
= \frac{1}{h_\lambda(t,x)} \int_{\R^d} G_{s-t}(y-x)h_\lambda(s,y)\dd y \\
= \frac{1}{h_\lambda(t,x)} (G_{s-t} * h_\lambda(s,\cdot))(x)
= \frac{1}{h_\lambda(t,x)} (G_{T-t} * \pi_\lambda)(x)
= 1.
\end{multline*}
Similarly, for $t < r < s$,
\begin{multline*}
\int_{\R^d} p_\lambda^*(t,x;r,z)p_\lambda^*(r,z;s,y)\dd z
= \frac{h_\lambda(s,y)}{h_\lambda(t,x)} \int_{\R^d} G_{r-t}(z-x)G_{s-r}(y-z)\dd z \\
= \frac{G_{s-t}(y-x)h_\lambda(s,y)}{h_\lambda(t,x)}
= p_\lambda^*(t,x;s,y).
\end{multline*}
Hence $p_\lambda^*$ is a Markov kernel.

Let $P_{t,s}^*$ denote the corresponding semigroup:
$$
P_{t,s}^* \varphi(x) = \int_{\R^d} \varphi(y)p_\lambda^*(t,x;s,y)\dd y = \frac{1}{h_\lambda(t,x)} P_{s-t}\big( h_\lambda(s,\cdot)\varphi \big)(x),
$$
where $P_r = G_r * \cdot$ is the heat semigroup. Differentiating at $s = t$ and using \eqref{h_lambda_heat}, we obtain
\begin{multline*}
\lim_{s \downarrow t} \frac{P_{t,s}^* \varphi(x) - \varphi(x)}{s-t}
= \frac{1}{h_\lambda(t,x)} \Big( \beta \Delta \big( h_\lambda(t,\cdot)\varphi \big)(x) + \partial_t h_\lambda(t,x)\varphi(x) \Big) \\
= \beta \Delta \varphi(x) + 2\beta \frac{\nabla h_\lambda(t,x)}{h_\lambda(t,x)} \cdot \nabla \varphi(x)
= \beta \Delta \varphi(x) + \mathbf u_\lambda^*(t,x) \cdot \nabla \varphi(x).
\end{multline*}
This gives the generator. Setting $s = T$ yields \eqref{terminal_eta}.
\end{proof}

\medskip
\paragraph{\bf Penalized Gibbs interpretation of the conditional terminal law.}
The next proposition is the bridge with Proposition \ref{warmup}. It rewrites the conditional terminal law \eqref{terminal_eta} as a Gibbs law on a penalized energy.

\begin{proposition}\label{penalized_gibbs}
For every $0 \leq t < T$ and $x \in \R^d$, the density $\eta_{\lambda,t,x}$ from \eqref{terminal_eta} can be written as
\begin{equation}\label{penalized_eta}
\eta_{\lambda,t,x}(y) = \frac{1}{Z_{\lambda,t,x}} \exp \big( - \alpha_\lambda \mathcal E_{\lambda,t,x}(y) \big),
\end{equation}
where
\begin{equation}\label{penalized_energy}
\mathcal E_{\lambda,t,x}(y) = f(y) + \frac{\lambda}{2T(T-t)}\Vert y - x \Vert^2,
\end{equation}
and $Z_{\lambda,t,x}$ is the normalizing constant
$$
Z_{\lambda,t,x} = (4\pi \beta (T-t))^{-d/2}\int_{\R^d} \exp \big( - \alpha_\lambda \mathcal E_{\lambda,t,x}(z) \big)\dd z.
$$
\end{proposition}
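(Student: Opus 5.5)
The plan is a direct algebraic rewriting of the formula \eqref{terminal_eta} from Lemma \ref{kernel_lemma}; no analysis beyond integrability is needed. First I will expand the numerator $G_{T-t}(x-y)\pi_\lambda(y)$ using the explicit heat kernel and the definition of $\pi_\lambda$:
$$
G_{T-t}(x-y)\pi_\lambda(y) = \frac{(4\pi\beta(T-t))^{-d/2}}{M_\lambda}\exp\Big( -\frac{\Vert y-x\Vert^2}{4\beta(T-t)} - \alpha_\lambda f(y) \Big).
$$
The key identity is
$$
\alpha_\lambda \cdot \frac{\lambda}{2T(T-t)} = \frac{T}{2\lambda\beta}\cdot\frac{\lambda}{2T(T-t)} = \frac{1}{4\beta(T-t)}.
$$
It shows that the Gaussian exponent is exactly $\alpha_\lambda$ times the proximal penalty in \eqref{penalized_energy}. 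Hence the numerator equals $(4\pi\beta(T-t))^{-d/2}M_\lambda^{-1}\exp(-\alpha_\lambda \mathcal E_{\lambda,t,x}(y))$.

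Next I will treat the denominator. By definition \eqref{def_h_phi}, $h_\lambda(t,x) = \int G_{T-t}(x-z)\pi_\lambda(z)\dd z$. The same computation, applied under the integral sign, gives $h_\lambda(t,x) = (4\pi\beta(T-t))^{-d/2}M_\lambda^{-1}\int \exp(-\alpha_\lambda\mathcal E_{\lambda,t,x}(z))\dd z$. To see that this is finite and positive, note that $\mathcal E_{\lambda,t,x} \geq f$, so the integrand is dominated by $e^{-\alpha_\lambda f} \in L^1$ by Assumption \ref{basic_ass}. It is also strictly positive everywhere.

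Taking the quotient, the factor $M_\lambda^{-1}$ cancels, as does the Gaussian prefactor $(4\pi\beta(T-t))^{-d/2}$, which appears in both numerator and denominator. This yields \eqref{penalized_eta}, with $\eta_{\lambda,t,x}$ a probability density whose normalizing constant is $\int \exp(-\alpha_\lambda \mathcal E_{\lambda,t,x})$.

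The only point requiring care, and the place I would double-check, is the bookkeeping of the constant $Z_{\lambda,t,x}$. If the prefactor $(4\pi\beta(T-t))^{-d/2}$ is placed in $Z_{\lambda,t,x}$ as in the statement, then the same factor must be kept in the numerator for consistency. Otherwise the normalization is simply the full integral. I would therefore state explicitly that $Z_{\lambda,t,x}$ is whatever constant makes $\eta_{\lambda,t,x}$ integrate to one, which equals $M_\lambda h_\lambda(t,x)$ up to the Gaussian prefactor convention. Finally, I would remark that the resulting energy is the stochastic analogue of the proximal functional in Proposition \ref{warmup}, with $T^2$ replaced by $T(T-t)$.
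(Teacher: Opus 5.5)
Your proposal is correct and follows the paper's proof: expand $G_{T-t}(x-y)\pi_\lambda(y)$, use $\alpha_\lambda\cdot\frac{\lambda}{2T(T-t)}=\frac{1}{4\beta(T-t)}$ to absorb the Gaussian exponent into $\alpha_\lambda\mathcal E_{\lambda,t,x}$, and divide by $h_\lambda(t,x)$. Your caution about the constant is also justified: the stated $Z_{\lambda,t,x}$ equals $M_\lambda h_\lambda(t,x)$, so for \eqref{penalized_eta} to hold literally, either the numerator must carry the factor $(4\pi\beta(T-t))^{-d/2}$ or $Z_{\lambda,t,x}$ must be the plain integral $\int_{\R^d} e^{-\alpha_\lambda\mathcal E_{\lambda,t,x}(z)}\dd z$, a prefactor that the paper's one-line ``dividing by $h_\lambda(t,x)$'' passes over.
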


\begin{proof}
Using the definitions of $G_{T-t}$, $\pi_\lambda$, and $\alpha_\lambda$,
\begin{align*}
G_{T-t}(x-y)\pi_\lambda(y)
&= \frac{1}{(4\pi \beta (T-t))^{d/2}M_\lambda} \exp \bigg( - \frac{\Vert x - y \Vert^2}{4\beta(T-t)} - \alpha_\lambda f(y) \bigg) \\
&= \frac{1}{(4\pi \beta (T-t))^{d/2}M_\lambda} \exp \bigg( - \alpha_\lambda \Big( f(y) + \frac{\lambda}{2T(T-t)}\Vert x - y \Vert^2 \Big) \bigg).
\end{align*}
Dividing by $h_\lambda(t,x)$ yields \eqref{penalized_eta}.
\end{proof}

Proposition \ref{penalized_gibbs} should be read literally. At the current state $(t,x)$, each candidate terminal point $y$ receives a weight that rewards low objective values through $f(y)$ and penalizes kinematic distance from the current position through the quadratic term $\displaystyle \frac{\lambda}{2T(T-t)}\Vert y - x \Vert^2$. At the initial time $t = 0$, the penalized energy in \eqref{penalized_energy} is 
$$
y \mapsto f(y) + \frac{\lambda}{2T^2}\Vert y - x \Vert^2,
$$
namely the same proximal energy as in Proposition \ref{warmup}. In that sense the stochastic problem is a Gibbs relaxation of the deterministic warm-up.

\medskip

\paragraph{\bf Three equivalent formulas for the optimal drift.}
We next give three representations of the optimal feedback. The potential form is the Hamilton-Jacobi representation; the averaged-gradient and barycentric forms are the two conditional-expectation representations. The differential and one-sided Lipschitz structure of the barycenter is recorded separately afterwards, because it is a consequence of the barycentric representation rather than a fourth formula for the drift.

\begin{proposition}[Three representations of the optimal drift]\label{three_forms}
Under Assumption \ref{basic_ass}, the optimal feedback admits the following equivalent representations on $[0,T)\times\R^d$. First, it is induced by the potential
\begin{equation}\label{potential_form}
\mathbf u_\lambda^*(t,x) = - \nabla \Phi_\lambda(t,x), \qquad 
\Phi_\lambda(t,x) = -2\beta \log (G_{T-t} * \pi_\lambda)(x).
\end{equation}
Second, it has the averaged-gradient form
\begin{equation}\label{gradient_and_barycenter}
\mathbf u_\lambda^*(t,x) = - \frac{T}{\lambda}\int_{\R^d} \nabla f(y)\,\eta_{\lambda,t,x}(y)\dd y.
\end{equation}
Third, it has the barycentric form
\begin{equation}\label{def_a_lambda}
\mathbf u_\lambda^*(t,x) = - \frac{x - a_\lambda(t,x)}{T-t}, \qquad a_\lambda(t,x) = \int_{\R^d} y\, \eta_{\lambda,t,x}(y)\dd y.
\end{equation}
\end{proposition}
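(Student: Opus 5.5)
The potential form \eqref{potential_form} is already contained in \eqref{optimal_feedback_formula} of Theorem \ref{main_value}, together with the definition \eqref{def_h_phi} of $\Phi_\lambda$, so nothing new needs to be proved there. The other two forms follow by differentiating $h_\lambda(t,x) = \int_{\R^d} G_{T-t}(x-y)\pi_\lambda(y)\dd y$ in $x$ in two different ways, and then dividing by $h_\lambda(t,x)>0$. This recognizes the quotient as an expectation against $\eta_{\lambda,t,x}$ from \eqref{terminal_eta}. Throughout, $\mathbf u_\lambda^* = 2\beta\nabla h_\lambda/h_\lambda$.

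For the barycentric form \eqref{def_a_lambda}, I would let the gradient fall on the Gaussian kernel: $\nabla_x G_{T-t}(x-y) = -\frac{x-y}{2\beta(T-t)}G_{T-t}(x-y)$. Differentiation under the integral is legitimate because, for $x$ in a compact set, $\Vert x-y\Vert G_{T-t}(x-y)$ is bounded uniformly, and $\pi_\lambda\in L^1$ by Assumption \ref{basic_ass}. This gives
$$
2\beta\frac{\nabla h_\lambda(t,x)}{h_\lambda(t,x)} = -\frac{1}{T-t}\int_{\R^d}(x-y)\,\eta_{\lambda,t,x}(y)\dd y = -\frac{x-a_\lambda(t,x)}{T-t}.
$$
The first moment of $\eta_{\lambda,t,x}$ is finite because of the Gaussian factor. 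Equivalently, one can write $\nabla\Phi_\lambda$ as the expectation of $\nabla_x$ of the penalized energy \eqref{penalized_energy}, using Proposition \ref{penalized_gibbs}.

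For the averaged-gradient form \eqref{gradient_and_barycenter}, I would instead write $h_\lambda(t,x) = \int G_{T-t}(z)\pi_\lambda(x-z)\dd z$ and let the gradient fall on $\pi_\lambda$. Since $\nabla\pi_\lambda = -\alpha_\lambda\nabla f\,\pi_\lambda$, we get $\nabla h_\lambda = -\alpha_\lambda\,G_{T-t}*(\nabla f\,\pi_\lambda)$. Using $2\beta\alpha_\lambda = T/\lambda$, this is exactly
$$
\mathbf u_\lambda^* = -\frac{T}{\lambda}\int_{\R^d}\nabla f(y)\,\eta_{\lambda,t,x}(y)\dd y.
$$
Alternatively, one can integrate by parts in $y$, using $\nabla_x G = -\nabla_y G$.

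The main point needing care is the justification of this second differentiation, or equivalently the vanishing of boundary terms in the integration by parts. Here Assumption \ref{basic_ass}(ii) is used. The bound $G_{T-t}\leq (4\pi\beta(T-t))^{-d/2}$ together with $\Vert\nabla f\Vert e^{-\alpha_\lambda f}\in L^1$ gives an integrable dominating function that is uniform in $x$. So the convolution derivative identity $\nabla(G*\pi_\lambda) = G*\nabla\pi_\lambda$ holds, since both $\pi_\lambda$ and $\nabla\pi_\lambda$ are in $L^1$. Finally, equating the two expressions gives a consistency check, namely a Stein-type identity for $\eta_{\lambda,t,x}$: $\frac{x-a_\lambda}{T-t} = \frac{T}{\lambda}\E_{\eta}[\nabla f]$.
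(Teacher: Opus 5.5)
Your proposal is correct and follows essentially the same route as the paper. The potential form is read off from Theorem~\ref{main_value}, and the barycentric form comes from letting $\nabla_x$ act on the Gaussian kernel. The averaged-gradient form comes from moving the derivative onto $\pi_\lambda$ via $\nabla_x G_{T-t}(x-y)=-\nabla_y G_{T-t}(x-y)$ and $\nabla\pi_\lambda=-\alpha_\lambda\pi_\lambda\nabla f$, together with $2\beta\alpha_\lambda=T/\lambda$. Your justifications of differentiation under the integral sign, which use Assumption~\ref{basic_ass}(ii), supply details that the paper leaves implicit.
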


\begin{proof}
The potential form is just the definition of $\Phi_\lambda$ together with $\mathbf u_\lambda^* = 2\beta \nabla_x \log h_\lambda$ and $h_\lambda = G_{T-t}*\pi_\lambda$. For the averaged-gradient form, we integrate by parts:
\begin{equation*}
\begin{split}
\nabla_x h_\lambda(t,x)
&=\int_{\R^d} \nabla_x G_{T-t}(x-y)\pi_\lambda(y)\dd y\\
&=- \int_{\R^d} \nabla_y G_{T-t}(x-y)\pi_\lambda(y)\dd y 
=\int_{\R^d} G_{T-t}(x-y)\nabla_y \pi_\lambda(y)\dd y.
\end{split}
\end{equation*}
Since $\nabla \pi_\lambda = - \alpha_\lambda \pi_\lambda \nabla f$, we obtain
$$
\nabla_x h_\lambda(t,x) = - \alpha_\lambda \int_{\R^d} G_{T-t}(x-y)\pi_\lambda(y)\nabla f(y)\dd y.
$$
Multiplying by $\frac{2\beta}{h_\lambda(t,x)}$ and using $2\beta\alpha_\lambda = \frac{T}{\lambda}$ yields \eqref{gradient_and_barycenter}. Finally,
$$
\nabla_x h_\lambda(t,x)
= \int_{\R^d} \nabla_x G_{T-t}(x-y)\pi_\lambda(y)\dd y
= - \frac{1}{2\beta(T-t)} \int_{\R^d} (x-y)G_{T-t}(x-y)\pi_\lambda(y)\dd y.
$$
Hence
$$
\mathbf u_\lambda^*(t,x) = 2\beta \frac{\nabla_x h_\lambda(t,x)}{h_\lambda(t,x)} = - \frac{x - a_\lambda(t,x)}{T-t},
$$
which proves \eqref{def_a_lambda}.
\end{proof}

\paragraph{\bf Differential structure of the barycenter.}
We now turn to the monotonicity and one-sided Lipschitz properties of $a_\lambda(t,\cdot)$. If $\eta$ is a probability density on $\R^d$, we denote by $Y$ the canonical random variable with law $\eta(y)\dd y$, by
$$
\E_\eta[\psi(Y)] = \int_{\R^d}\psi(y)\eta(y)\dd y,
\qquad m_\eta = \E_\eta[Y],
$$
and by
$$
\operatorname{Cov}_\eta(Y)
= \E_\eta\big[(Y-m_\eta)(Y-m_\eta)^\top\big]
= \int_{\R^d}(y-m_\eta)(y-m_\eta)^\top\eta(y)\dd y
$$
the covariance matrix. For $\xi\in\R^d$,
$$
\operatorname{Var}_\eta(\xi\cdot Y)
= \E_\eta\big[(\xi\cdot(Y-m_\eta))^2\big]
= \xi^\top\operatorname{Cov}_\eta(Y)\xi.
$$
When $\eta=\eta_{\lambda,t,x}$, this convention gives $m_\eta=a_\lambda(t,x)$.

\begin{proposition}[Barycentric monotonicity and covariance-Hessian identity]\label{barycenter_differential}
Under Assumption \ref{basic_ass}, $a_\lambda(t,\cdot)$ is $C^1$ and satisfies the matrix identity
\begin{equation}\label{Ceta_factorization}
D_xa_\lambda(t,x)
= I_d-(T-t)D_x^2\Phi_\lambda(t,x) 
= \frac{1}{2\beta(T-t)}\operatorname{Cov}_{\eta_{\lambda,t,x}}(Y).
\end{equation}
Here the covariance is computed under the law $\eta_{\lambda,t,x}(y)\dd y$. In particular, $D_xa_\lambda(t,x)$ is symmetric nonnegative definite and $a_\lambda(t,\cdot)$ is monotone.
\end{proposition}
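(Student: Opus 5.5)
The plan is to differentiate the barycentric representation \eqref{def_a_lambda} directly, i.e. compute the derivative of the first moment of the Gibbs-type density $\eta_{\lambda,t,x}$ with respect to the parameter $x$. By \eqref{def_a_lambda} and \eqref{potential_form}, $a_\lambda(t,x) = x + (T-t)\mathbf u_\lambda^*(t,x) = x - (T-t)\nabla\Phi_\lambda(t,x)$. Since $h_\lambda(t,\cdot)$ is $C^\infty$ and positive for $t<T$ (proof of Theorem \ref{main_value}), $\Phi_\lambda(t,\cdot)$ is smooth. Hence $a_\lambda(t,\cdot)$ is $C^1$ and $D_x a_\lambda = I_d - (T-t)D_x^2\Phi_\lambda$. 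This gives the first equality in \eqref{Ceta_factorization} at once, together with symmetry of $D_xa_\lambda$.

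For the covariance identity, I would write $a_\lambda(t,x) = N(x)/h_\lambda(t,x)$, where
\[
N(x)=\int_{\R^d} y\,G_{T-t}(x-y)\pi_\lambda(y)\dd y .
\]
I would then differentiate under the integral using $\nabla_x G_{T-t}(x-y) = -\frac{x-y}{2\beta(T-t)}G_{T-t}(x-y)$. This gives
\[
D_x N = \frac{1}{2\beta(T-t)}\int_{\R^d} y\,(y-x)^\top G_{T-t}(x-y)\pi_\lambda(y)\dd y,
\qquad
\nabla_x h_\lambda = \frac{1}{2\beta(T-t)}\int_{\R^d} (y-x)\,G_{T-t}(x-y)\pi_\lambda(y)\dd y .
\]
The quotient rule then yields
\[
D_x a_\lambda = \frac{1}{2\beta(T-t)}\Big(\E_\eta[Y(Y-x)^\top] - \E_\eta[Y]\,\E_\eta[(Y-x)^\top]\Big),
\]
with $\eta=\eta_{\lambda,t,x}$. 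The terms involving $x$ cancel because $\E_\eta[Y]x^\top - \E_\eta[Y]x^\top = 0$. What remains is $\E_\eta[YY^\top]-m_\eta m_\eta^\top = \operatorname{Cov}_\eta(Y)$, which is the second equality in \eqref{Ceta_factorization}.

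Positive semidefiniteness then follows from $\xi^\top\operatorname{Cov}_\eta(Y)\xi = \operatorname{Var}_\eta(\xi\cdot Y)\ge 0$. For monotonicity, I would apply the mean value theorem along segments:
\[
(a_\lambda(t,x)-a_\lambda(t,x'))\cdot(x-x') = \int_0^1 (x-x')^\top D_xa_\lambda\big(t,x'+s(x-x')\big)(x-x')\dd s \ge 0 .
\]

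The main obstacle is justifying differentiation under the integral and the finiteness of second moments of $\eta_{\lambda,t,x}$, locally uniformly in $x$. I would handle this by dominated convergence, using a Gaussian bound on $\Vert y\Vert^2 G_{T-t}(x-y)$ uniformly for $x$ in a compact set. This is of the form $C(1+\Vert y\Vert^2)e^{-c\Vert y\Vert^2}$, and it multiplies $\pi_\lambda$, which is bounded because $f$ is bounded below and integrable by Assumption \ref{basic_ass}. The product is therefore integrable, which also gives continuity of $D_xa_\lambda$ and hence the $C^1$ claim.
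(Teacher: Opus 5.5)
Your proposal is correct and follows essentially the paper's route. The paper differentiates the first moment of $\eta_{\lambda,t,x}$ through the score identity $\nabla_x\log\eta_{\lambda,t,x}(y)=\frac{y-a_\lambda(t,x)}{2\beta(T-t)}$, which is a compact repackaging of your quotient-rule computation, and it gets the Hessian form from $a_\lambda=x-(T-t)\nabla\Phi_\lambda$ exactly as you do; your dominated-convergence justification for differentiating under the integral is a detail the paper leaves implicit.
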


\begin{proof}
Fix $(t,x)$ and let $Y$ have law $\eta_{\lambda,t,x}(y)\dd y$. Writing $\log \eta_{\lambda,t,x}(y) = -\frac{\Vert x-y\Vert^2}{4\beta(T-t)} + \log\pi_\lambda(y) - \log h_\lambda(t,x)$ and using $\nabla_x \log h_\lambda(t,x) = -\frac{1}{2\beta}\nabla\Phi_\lambda(t,x) = \frac{a_\lambda(t,x) - x}{2\beta(T-t)}$, we obtain
$$
\nabla_x\log \eta_{\lambda,t,x}(y)
= \frac{y-a_\lambda(t,x)}{2\beta(T-t)}.
$$
Therefore, for every direction $\xi\in\R^d$,
\begin{multline*}
D_xa_\lambda(t,x)\xi
= \int_{\R^d} y\,\xi\cdot\nabla_x\log\eta_{\lambda,t,x}(y)\,\eta_{\lambda,t,x}(y)\dd y \\
= \frac{1}{2\beta(T-t)}\int_{\R^d} y\,\xi\cdot\big(y-a_\lambda(t,x)\big)\eta_{\lambda,t,x}(y)\dd y 
= \frac{1}{2\beta(T-t)}\operatorname{Cov}_{\eta_{\lambda,t,x}}(Y)\xi.
\end{multline*}
This proves the covariance identity. On the other hand, the barycentric formula and the potential representation give $a_\lambda(t,x)=x-(T-t)\nabla\Phi_\lambda(t,x)$, hence $D_xa_\lambda(t,x)=I_d-(T-t)D_x^2\Phi_\lambda(t,x)$. Since covariance matrices are nonnegative definite, $D_xa_\lambda(t,x)$ is nonnegative definite. The monotonicity follows by integrating $D_xa_\lambda$ along line segments.
\end{proof}

\paragraph{\bf Factorized one-sided Lipschitz identities.}
Let $t<T$, $x_1,x_2\in\R^d$, and $x(\theta)=x_1+\theta(x_2-x_1)$. We define the trace covariance along this segment by
\begin{equation}\label{eq:Ceta}
C_{\eta_\lambda}(t,x_1,x_2)
= \int_0^1 \operatorname{tr}\operatorname{Cov}_{\eta_{\lambda,t,x(\theta)}}(Y)\dd\theta 
= \int_0^1\int_{\R^d}\Vert y-a_\lambda(t,x(\theta))\Vert^2\eta_{\lambda,t,x(\theta)}(y)\dd y\dd\theta.
\end{equation}
Taking the trace in \eqref{Ceta_factorization} gives, for every $x$,
$$
\operatorname{tr}\operatorname{Cov}_{\eta_{\lambda,t,x}}(Y)
= 2\beta(T-t)\big(d-(T-t)\Delta\Phi_\lambda(t,x)\big).
$$
After integration along the segment, this yields the exact factorization
\begin{equation}\label{Ceta_terminal_factorized}
C_{\eta_\lambda}(t,x_1,x_2)
= 2\beta(T-t)\int_0^1\big(d-(T-t)\Delta\Phi_\lambda(t,x(\theta))\big)\dd\theta.
\end{equation}

\begin{corollary}[Directional, variance and trace forms of the one-sided Lipschitz bound]\label{factorized_osl_cor}
Let $t<T$ and $x_1,x_2\in \R^d$. With $x(\theta)=x_1+\theta(x_2-x_1)$, the directional identity
\begin{equation}\label{eq:OSL_directional}
\big\langle a_\lambda(t,x_2)-a_\lambda(t,x_1),x_2-x_1\big\rangle 
= \int_0^1 (x_2-x_1)^\top \big(I_d-(T-t)D_x^2\Phi_\lambda(t,x(\theta))\big)(x_2-x_1)\dd\theta
\end{equation}
holds. Equivalently, in covariance form,
\begin{equation}\label{eq:OSL_variance}
\big\langle a_\lambda(t,x_2)-a_\lambda(t,x_1),x_2-x_1\big\rangle
= \frac{1}{2\beta(T-t)}\int_0^1 \operatorname{Var}_{\eta_{\lambda,t,x(\theta)}}\big((x_2-x_1)\cdot Y\big)\dd\theta,
\end{equation}
where, for each $\theta$, the random variable $Y$ has law $\eta_{\lambda,t,x(\theta)}(y)\dd y$. Moreover,
\begin{equation}\label{eq:OSLC}
0 \leq \big\langle a_\lambda(t,x_2)-a_\lambda(t,x_1),x_2-x_1\big\rangle
\leq \Vert x_2-x_1\Vert^2\int_0^1\big(d-(T-t)\Delta\Phi_\lambda(t,x(\theta))\big)\dd\theta.
\end{equation}
\end{corollary}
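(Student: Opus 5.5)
The plan is to derive all three statements from the fundamental theorem of calculus along the segment, combined with the matrix identity \eqref{Ceta_factorization} of Proposition \ref{barycenter_differential}.

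\textbf{Directional identity.} Set $v=x_2-x_1$. Since $a_\lambda(t,\cdot)$ is $C^1$ by Proposition \ref{barycenter_differential}, we have
$$
a_\lambda(t,x_2)-a_\lambda(t,x_1)=\int_0^1 D_xa_\lambda(t,x(\theta))v\dd\theta .
$$
Taking the inner product with $v$ and inserting $D_xa_\lambda=I_d-(T-t)D_x^2\Phi_\lambda$ gives \eqref{eq:OSL_directional}. Inserting instead the covariance expression $D_xa_\lambda=\frac{1}{2\beta(T-t)}\operatorname{Cov}_{\eta_{\lambda,t,x(\theta)}}(Y)$ and using $v^\top\operatorname{Cov}_\eta(Y)v=\operatorname{Var}_\eta(v\cdot Y)$ gives \eqref{eq:OSL_variance}. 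Continuity in $\theta$ of the integrand, which justifies the integral, follows from $a_\lambda(t,\cdot)\in C^1$.

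\textbf{Lower bound in \eqref{eq:OSLC}.} It follows immediately from \eqref{eq:OSL_variance}, since variances are nonnegative.

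\textbf{Upper bound in \eqref{eq:OSLC}.} I bound the quadratic form of the positive semidefinite matrix $D_xa_\lambda$ by its largest eigenvalue. That eigenvalue is in turn at most the trace, again by positive semidefiniteness. This gives
$$
v^\top D_xa_\lambda(t,x(\theta))v\le \|v\|^2\operatorname{tr}D_xa_\lambda(t,x(\theta)) .
$$
Taking the trace of $I_d-(T-t)D_x^2\Phi_\lambda$ gives $d-(T-t)\Delta\Phi_\lambda(t,x(\theta))$, equivalently $\operatorname{tr}\operatorname{Cov}/(2\beta(T-t))$. Integrating over $\theta\in[0,1]$ yields the upper bound in \eqref{eq:OSLC}, which is also $\|v\|^2 C_{\eta_\lambda}(t,x_1,x_2)/(2\beta(T-t))$ by \eqref{Ceta_terminal_factorized}.

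\textbf{Main obstacle.} The only real point is the eigenvalue-versus-trace bound, which relies on positive semidefiniteness; this is exactly what Proposition \ref{barycenter_differential} provides. Otherwise one only needs the integrability and differentiability already established there.
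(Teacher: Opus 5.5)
Your proposal is correct and follows essentially the same route as the paper. Both integrate $D_xa_\lambda$ along the segment, insert the two expressions from \eqref{Ceta_factorization}, and bound the quadratic form of the symmetric nonnegative definite matrix $I_d-(T-t)D_x^2\Phi_\lambda$ by $\Vert x_2-x_1\Vert^2$ times its trace; the lower bound, which you obtain from nonnegativity of the variance, is left implicit in the paper.
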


\begin{proof}
Integrating \eqref{Ceta_factorization} along the segment from $x_1$ to $x_2$ gives
$$
 a_\lambda(t,x_2)-a_\lambda(t,x_1) = \int_0^1D_xa_\lambda(t,x(\theta))(x_2-x_1)\dd\theta.
$$
Taking the scalar product with $x_2-x_1$ and using the identity
$D_xa_\lambda(t,x)=I_d-(T-t)D_x^2\Phi_\lambda(t,x)$
gives \eqref{eq:OSL_directional}. Using instead
$D_xa_\lambda(t,x)=\frac{1}{2\beta(T-t)}\operatorname{Cov}_{\eta_{\lambda,t,x}}(Y)$
gives \eqref{eq:OSL_variance}, because
$$
(x_2-x_1)^\top\operatorname{Cov}_{\eta_{\lambda,t,x(\theta)}}(Y)(x_2-x_1)
= \operatorname{Var}_{\eta_{\lambda,t,x(\theta)}}\big((x_2-x_1)\cdot Y\big).
$$
It remains to prove the trace bound. For every $\theta$, the matrix
$$
A_\theta=I_d-(T-t)D_x^2\Phi_\lambda(t,x(\theta))
= \frac{1}{2\beta(T-t)}\operatorname{Cov}_{\eta_{\lambda,t,x(\theta)}}(Y)
$$
is symmetric nonnegative definite. Therefore, for every $z\in\R^d$,
$z^\top A_\theta z \leq \Vert z\Vert^2\operatorname{tr} A_\theta$.
Applying this with $z=x_2-x_1$ and using
$\operatorname{tr}A_\theta=d-(T-t)\Delta\Phi_\lambda(t,x(\theta))$
yields \eqref{eq:OSLC} after integration in $\theta$.
\end{proof}

\begin{remark}[Near-terminal behavior and Hamilton-Jacobi interpretation]\label{terminal_semiconcavity_rem}
We spell out the near-terminal content of the preceding identities. As $t\uparrow T$, the conditional law $\eta_{\lambda,t,x}$ localizes near $x$ at scale $\sqrt{T-t}$. Under Assumption \ref{basic_ass}, $\pi_\lambda$ is $C^2$ on $\R^d$ with locally bounded derivatives. The Gaussian semigroup $G_\tau\,*$ is a smooth approximation of the identity, so $G_{T-t}*\pi_\lambda \to \pi_\lambda$ in $C^2_{\mathrm{loc}}(\R^d)$ as $t\uparrow T$. Since $\pi_\lambda$ is positive and the logarithm is smooth on positive functions bounded away from zero on compact sets, this convergence also holds after applying $-2\beta\log$. Therefore
\begin{equation}\label{terminal_hessian_phi}
D_x^2\Phi_\lambda(t,\cdot)
\longrightarrow -2\beta D_x^2\log\pi_\lambda = \frac{T}{\lambda}D^2f
\qquad\hbox{locally uniformly as } t\uparrow T.
\end{equation}
Inserting \eqref{terminal_hessian_phi} into \eqref{eq:OSL_directional} gives, uniformly for $x_1,x_2$ in compact sets,
\begin{multline}\label{terminal_directional_osl}
\big\langle a_\lambda(t,x_2)-a_\lambda(t,x_1),x_2-x_1\big\rangle \\
= \Vert x_2-x_1\Vert^2 - \frac{T(T-t)}{\lambda}\int_0^1 (x_2-x_1)^\top D^2f(x(\theta))(x_2-x_1)\dd\theta 
+ \Vert x_2-x_1\Vert^2\mathrm{o}(T-t).
\end{multline}
Thus the sharp directional coefficient tends to $1$, consistently with the terminal identity $a_\lambda(T,x)=x$. Similarly, \eqref{Ceta_terminal_factorized} yields the trace expansion
\begin{equation}\label{Ceta_terminal_expansion}
C_{\eta_\lambda}(t,x_1,x_2)
= 2d\beta(T-t) - \frac{2\beta T}{\lambda}(T-t)^2\int_0^1\Delta f(x(\theta))\dd\theta + \mathrm{o}\big((T-t)^2\big).
\end{equation}
Thus the directional coefficient in \eqref{eq:OSL_directional} converges to $1$, whereas the trace majorant in \eqref{eq:OSLC} converges to $d$. The trace bound is less sharp, but it is useful when only the scalar trace covariance $C_{\eta_\lambda}$ is tracked.

The Hamilton-Jacobi interpretation is as follows. From \eqref{Ceta_factorization},
$$
I_d-(T-t)D_x^2\Phi_\lambda(t,x)\geq 0,
\qquad
D_x^2\Phi_\lambda(t,x)\leq \frac{1}{T-t}I_d.
$$
This is the usual one-sided semiconcavity barrier for a quadratic Hamilton-Jacobi flow; see, for example, \cite{BardiCapuzzoDolcetta97, CannarsaSinestrari04, lions1982generalized}. It is an upper barrier, not an assertion that the Hessian actually blows up. Indeed, \eqref{Ceta_factorization} is equivalently
$$
D_x^2\Phi_\lambda(t,x) = \frac{1}{T-t}I_d - \frac{1}{2\beta(T-t)^2}\operatorname{Cov}_{\eta_{\lambda,t,x}}(Y).
$$
Combining this identity with \eqref{terminal_hessian_phi} gives, componentwise and locally uniformly in $x$,
$$
\operatorname{Cov}_{\eta_{\lambda,t,x}}(Y_i,Y_j)
= 2\beta(T-t)\delta_{ij} - \frac{2\beta T}{\lambda}(T-t)^2\partial_{ij}f(x) + \mathrm{o}\big((T-t)^2\big).
$$
Thus the diagonal covariance contains the leading heat-kernel term $2\beta(T-t)$, which cancels the diagonal semiconcavity barrier $\frac{1}{T-t}I_d$ in the Hessian formula; the off-diagonal covariance is only of order $(T-t)^2$, so no off-diagonal singularity is present.

From the geometric viewpoint of optimal control, the inviscid analogue would correspond to a Riccati equation for the Hessian along characteristics, whose finite-time blow-up marks the formation of caustics or conjugate points (see, e.g., \cite{AgrachevSachkov04,BonnardCaillauTrelat07}). In the present viscous whole-space setting, the Cole-Hopf representation together with the covariance identity \eqref{Ceta_factorization} shows directly that no such terminal singularity occurs.
\end{remark}

\begin{corollary}[Conditional terminal expectations]\label{conditional_cor}
Let $X^*$ denote the optimal Markov process from Lemma \ref{kernel_lemma}. Then, conditionally on $X_t^* = x$, the terminal law of $X_T^*$ is $\eta_{\lambda,t,x}$. Consequently,
$$
a_\lambda(t,x) = \E \big[ X_T^* \,\vert\, X_t^* = x \big],
$$
and
$$
\mathbf u_\lambda^*(t,x) = - \frac{T}{\lambda}\E \big[ \nabla f(X_T^*) \,\vert\, X_t^* = x \big] = - \frac{x - \E \big[ X_T^* \,\vert\, X_t^* = x \big]}{T-t}.
$$
\end{corollary}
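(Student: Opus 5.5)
The plan is to read the corollary directly off Lemma \ref{kernel_lemma} and Proposition \ref{three_forms}, with essentially no new computation.

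\textbf{Step 1: the conditional terminal law.} Let $X^*$ be the Markov process with transition kernel $p_\lambda^*$ constructed in Lemma \ref{kernel_lemma}. By the Markov property, the regular conditional law of $X_T^*$ given $X_t^* = x$ is the kernel $p_\lambda^*(t,x;T,\cdot)$. By \eqref{terminal_eta}, this kernel equals $\eta_{\lambda,t,x}(y)\dd y$. This identification holds for every $x$, because the kernel is defined pointwise rather than only for almost every $x$, so no null-set issue arises.

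\textbf{Step 2: the barycenter.} Taking expectations of $y$ under $\eta_{\lambda,t,x}$ and comparing with the definition of $a_\lambda$ in \eqref{def_a_lambda} gives
\[
a_\lambda(t,x) = \E\big[X_T^* \,\vert\, X_t^* = x\big].
\]

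\textbf{Step 3: the two forms of the drift.} The same substitution in the averaged-gradient formula \eqref{gradient_and_barycenter} and in the barycentric formula \eqref{def_a_lambda} yields both expressions for $\mathbf u_\lambda^*$.

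\textbf{Integrability.} The only point needing care is that the conditional expectations $\E[X_T^*\,\vert\,X_t^*=x]$ and $\E[\nabla f(X_T^*)\,\vert\,X_t^*=x]$ are finite, so that they are genuine integrals.

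For the gradient term, write $\eta_{\lambda,t,x}(y) = C\,G_{T-t}(x-y)\,e^{-\alpha_\lambda f(y)}$. Since the Gaussian factor is bounded, Assumption \ref{basic_ass} ii), which gives $\Vert\nabla f\Vert e^{-\alpha_\lambda f}\in L^1$, implies $\Vert\nabla f\Vert\,\eta_{\lambda,t,x}\in L^1$.

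For the first moment, the bound $\Vert y\Vert\, G_{T-t}(x-y)\le C'$ holds for fixed $x$, and $e^{-\alpha_\lambda f}\in L^1$, so $\Vert y\Vert\,\eta_{\lambda,t,x}\in L^1$.

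\textbf{Identification with the controlled SDE.} Finally, one should make precise that $X^*$ is the state process of the SDE driven by the feedback $\mathbf u_\lambda^*$. This follows from the generator identification in Lemma \ref{kernel_lemma}, using that $\mathbf u_\lambda^*$ is smooth on $[0,T)\times\R^d$ (it is locally Lipschitz, so pathwise uniqueness holds up to time $T$). Alternatively, we can simply take the corollary's $X^*$ to be the process from the lemma, as the statement does.

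I expect no step to be a real obstacle. The mildest subtlety is this identification of the SDE solution with the kernel-defined process near $t=T$, where the drift may be unbounded in $x$. For the purposes of the statement, however, referring to the process of Lemma \ref{kernel_lemma} suffices.
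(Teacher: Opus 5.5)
Your proposal is correct and follows the paper's proof, which reads the conditional terminal law off \eqref{terminal_eta} and obtains the two drift formulas by substituting this law into Proposition \ref{three_forms}. Your integrability checks via Assumption \ref{basic_ass} ii) and your remark on identifying $X^*$ with the SDE solution are details the paper leaves implicit.
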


\begin{proof}
The first statement is \eqref{terminal_eta}. The formulas then follow from Proposition \ref{three_forms}.
\end{proof}

Corollary \ref{conditional_cor} is the main message of the section. At time $t$ and current position $x$, the optimal drift is determined by a weighted family of candidate terminal points. The three formulas say the same thing in three different languages:
\begin{enumerate}[label=\arabic*)]
\item {\bf Potential language}. The optimal feedback is dictated as a heat/Hamilton-Jacobi regularization of the terminal landscape. Namely,  $\mathbf u_\lambda^* = - \nabla \Phi_\lambda$, where $\Phi_\lambda$ is the negative logarithm of a heat-regularized Gibbs density, or equivalently, $\mathbf u_\lambda^*$ is the negative gradient of $\frac{T}{\lambda}V_\lambda$, up to an irrelevant additive constant in the potential.  As $t \uparrow T$, Proposition \ref{terminal_limit} gives $V_\lambda(t,\cdot) \to f$ locally uniformly; equivalently, $\frac{\lambda}{T}\Phi_\lambda(t,\cdot) - \frac{2\lambda\beta}{T}\log M_\lambda \to f$ locally uniformly.

\begin{figure}[!htb]
	\centering
	\subcaptionbox{\label{subfig:t=0.1}}{
		\includegraphics[width=0.25\textwidth]{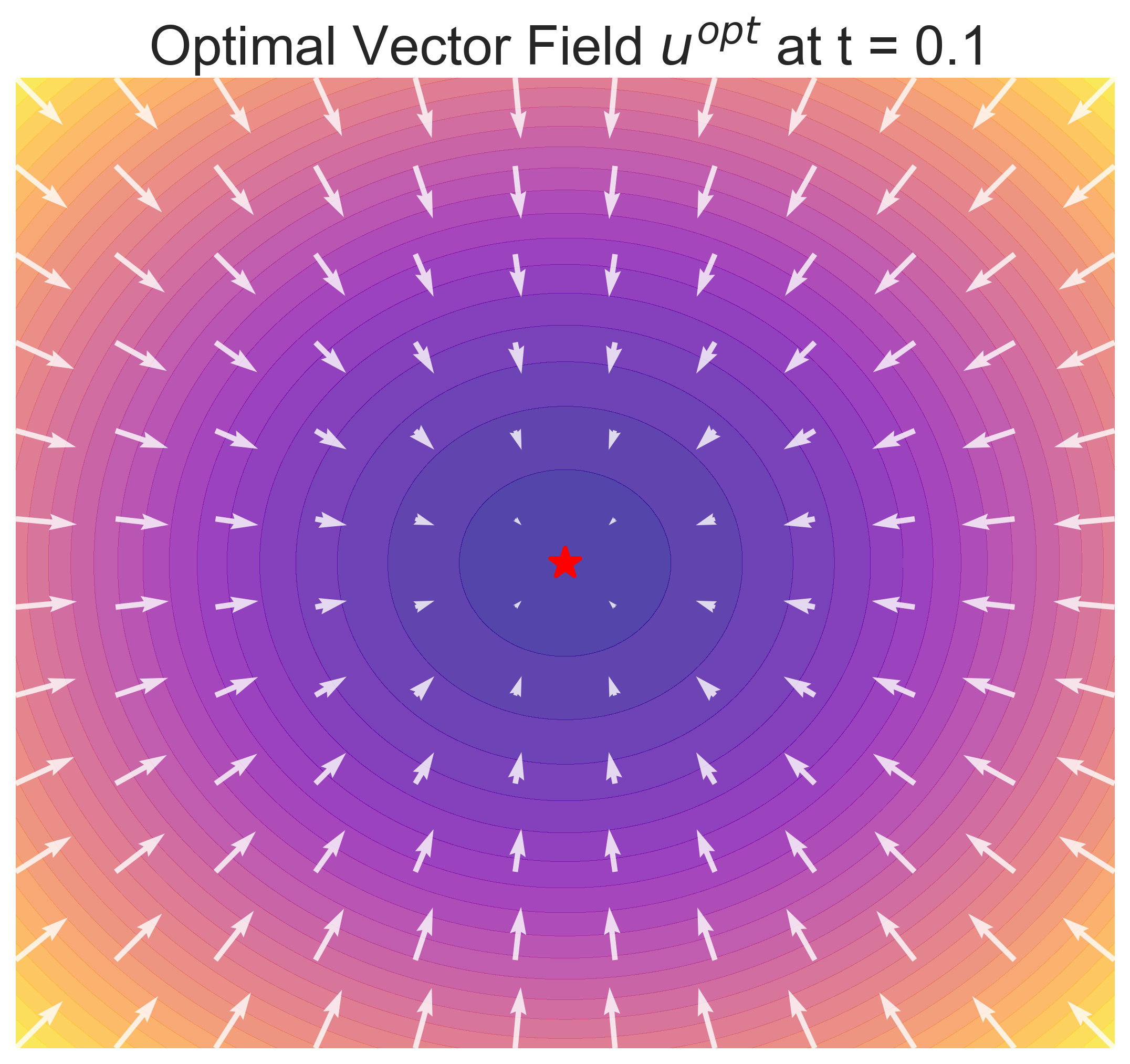}
	} 
	\subcaptionbox{\label{subfig:t=0.9}}{
		\includegraphics[width=0.25\textwidth]{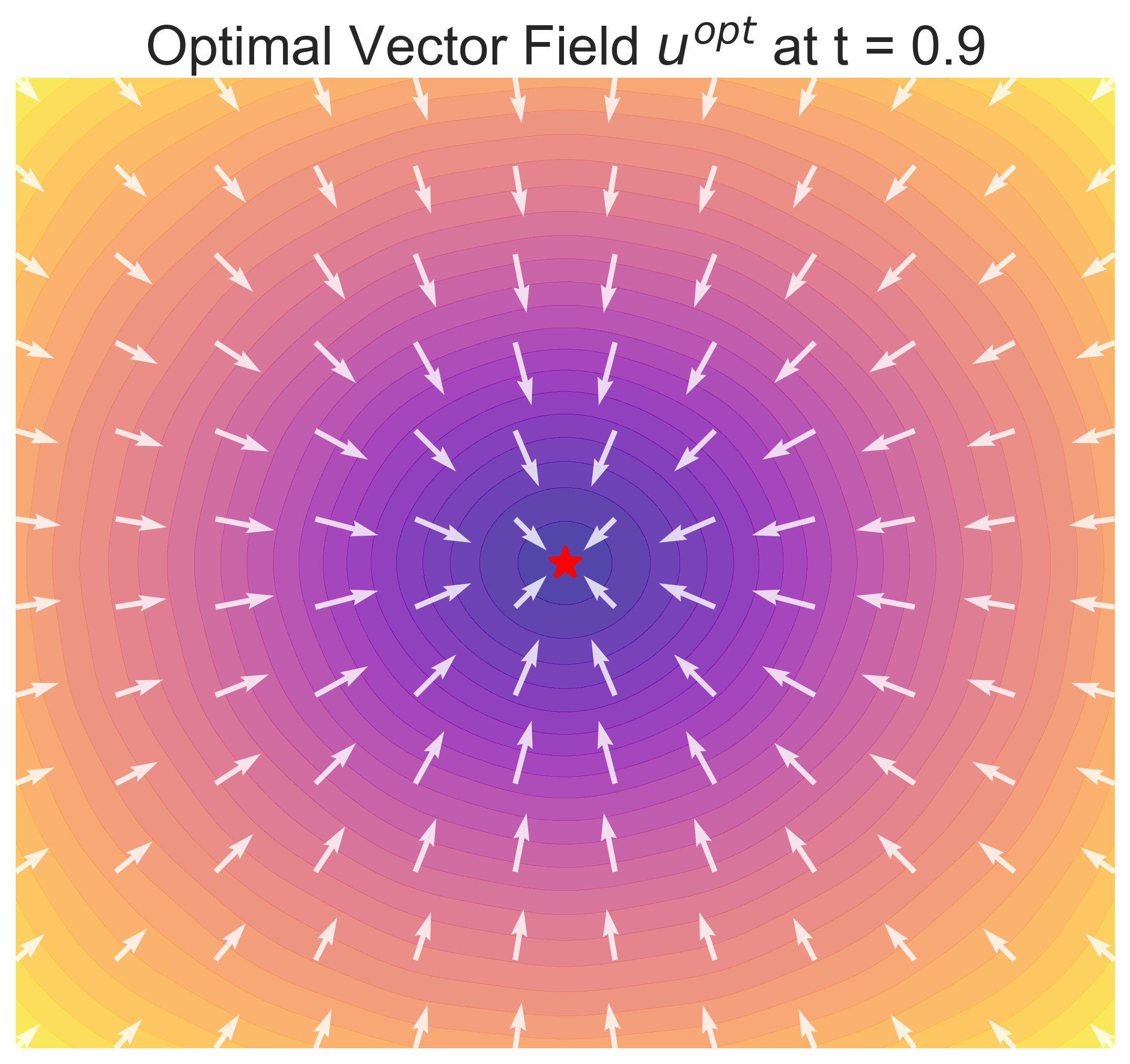}
	} 
	\caption{Visualization of $\Phi_\lambda$ and the control $\mathbf u_\lambda^* = -\nabla_x \Phi_\lambda(t,x)$ at different time $t$.
    We choose $f$ to be the 2-dimensional Ackley function.
    The parameters are set as $\lambda = 1$.
    The color-contour represents the landscape of $\Phi_\lambda(t, x)$ defined in \eqref{direct_value_formula}.
    The red star marks the target minimizer $x^*$ of $f$.
    The white arrows indicate the vector field. 
    The sequence of plots suggests that $\Phi_\lambda(t,x)$ is a strongly smoothed effective landscape when $t$ is small, and that this smoothing decreases as $t$ approaches $T=1$.
    }
	\label{fig:vf_vis}
\end{figure}

\item {\bf Averaged-gradient language}. The drift is the Gibbs-Gaussian average of the local gradients $\nabla f(y)$.
This weighted form of $\mathbf u_\lambda^*$ involves the normalized weight
$$
\eta_{\lambda,t,x}(y) \propto \exp \bigg( - \frac{T}{2\lambda\beta}f(y) - \frac{\Vert y - x \Vert^2}{4\beta(T-t)} \bigg).
$$
The first exponential rewards small values of the objective. The second rewards terminal points compatible with the current position and the remaining time. 
As $t \uparrow T$, the optimal feedback recovers a scaled negative gradient field; specifically, Proposition \ref{terminal_limit} gives
$$
\lim_{t \uparrow T} \mathbf u_\lambda^*(t,x) = -\frac{T}{\lambda}\nabla f(x),
$$
locally uniformly in $x$. In the normalization $T=1$, this becomes $-\frac{1}{\lambda}\nabla f(x)$. This alignment near $T=1$ is illustrated in Figure \ref{fig:vf_vis_compared}.

\begin{figure}[!htb]
	\centering
	\subcaptionbox{\label{subfig:t=0.99}}{
		\includegraphics[width=0.25\textwidth]{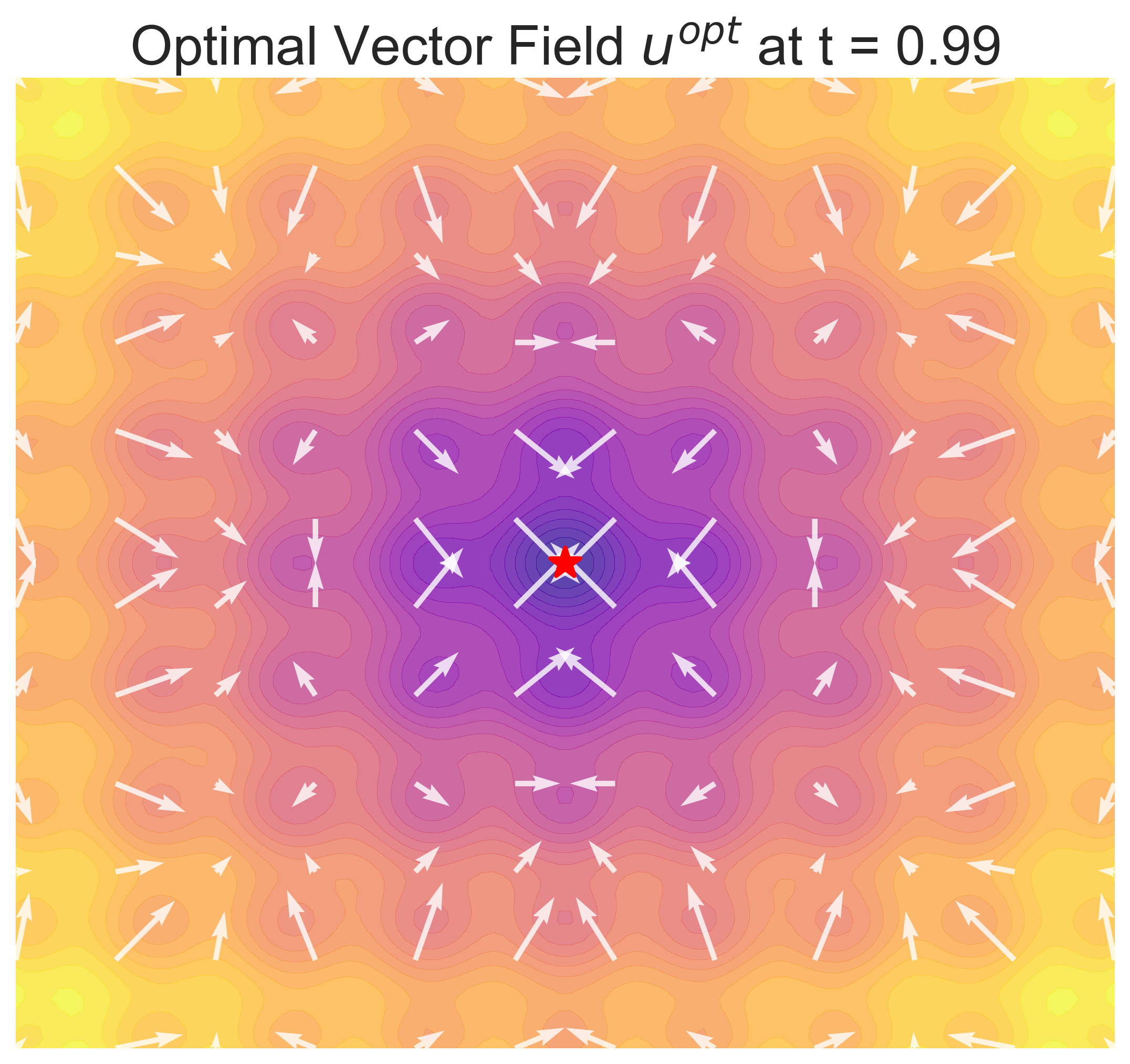}
	} 
	\subcaptionbox{\label{subfig:neg_grad}}{
		\includegraphics[width=0.25\textwidth]{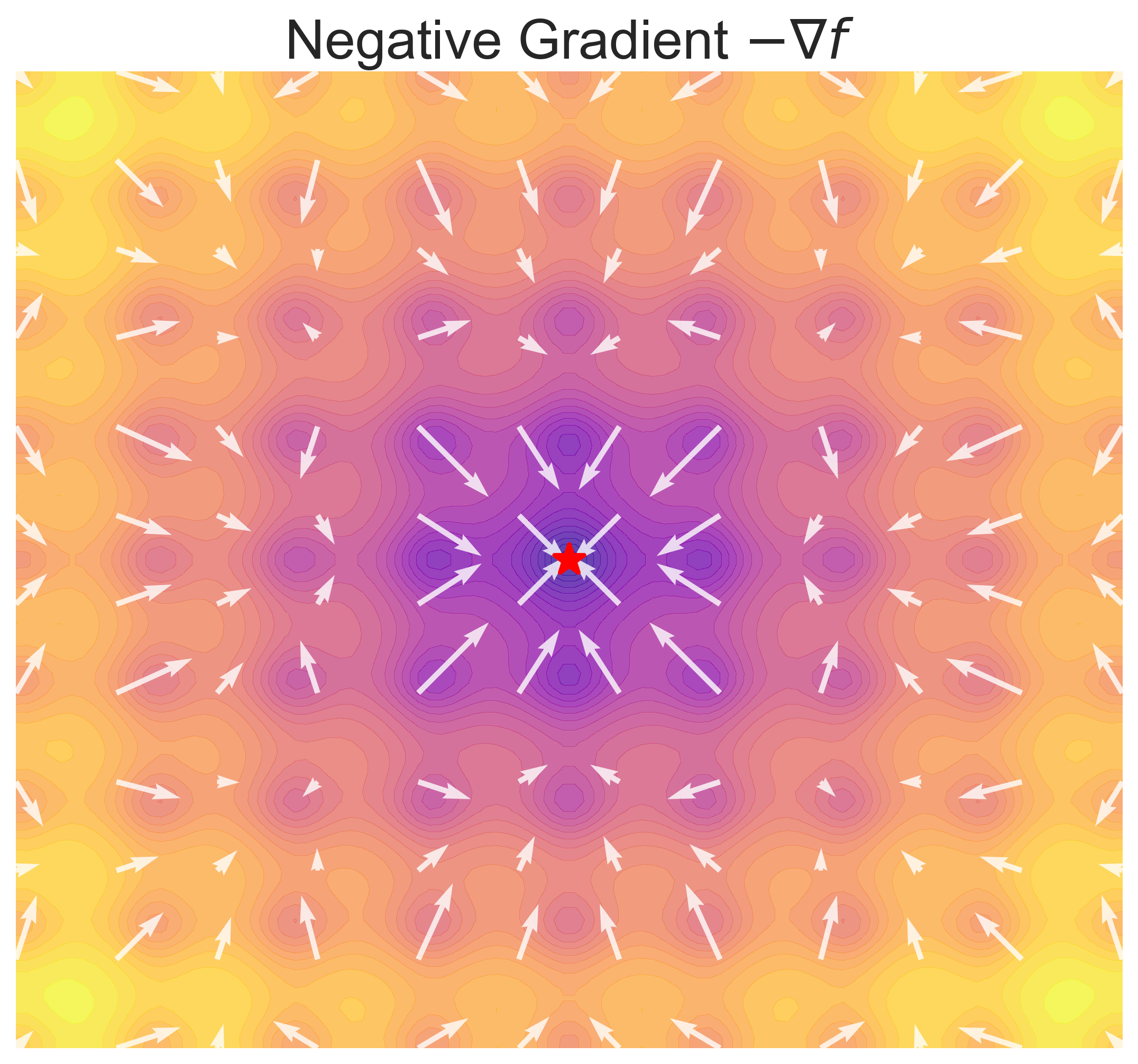}
	}
	\caption{Visualization of $(\Phi_\lambda, \mathbf u_\lambda^*)$ for $t = 0.99$ and $(f, -\nabla f)$.
         They are visually almost indistinguishable. Here $f$ is the two-dimensional Ackley function.
      }
	\label{fig:vf_vis_compared}
\end{figure}   

\item {\bf Barycentric language}. The drift points from the current state $x$ toward the weighted average terminal point $a_\lambda(t,x)$.
The barycentric formula then says: compute the weighted average of all candidate terminal points and move from $x$ toward that average.
It is this gradient-free formulation of $\mathbf u_\lambda^*$ that leads to the computational discretization recorded in Section \ref{numerics_sec} below.
The barycentric form also carries the factorized OSL and semiconcavity structure of $a_\lambda(t,\cdot)$ described in Proposition \ref{barycenter_differential} and Corollary \ref{factorized_osl_cor}.
\end{enumerate}

Two consequences are immediate. First, the drift may be evaluated either from function values alone through the barycentric formula or, when gradients are available, through the averaged-gradient formula. Second, unlike in Gaussian homotopy or consensus-based optimization, the variance $2\beta(T-t)$ and the effective temperature $\displaystyle \frac{2\lambda\beta}{T}$ are not chosen manually: they are selected by the control criterion itself.

\begin{corollary}[Exact optimal initialization]\label{exact_init}
If $X_0$ has density $h_\lambda(0,\cdot)$ and evolves under the optimal feedback $\mathbf u_\lambda^*$, then the law of $X_s$ is $h_\lambda(s,\cdot)$ for every $s \in [0,T]$. In particular,
$\Law(X_T) = \pi_\lambda(y)\dd y$.
\end{corollary}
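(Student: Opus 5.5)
The natural route is to use the Markov kernel $p_\lambda^*$ of Lemma \ref{kernel_lemma}, which realizes the optimal feedback dynamics. Mixing it against the initial density $h_\lambda(0,\cdot)$ should reproduce $h_\lambda(s,\cdot)$ directly. For $s\in(0,T]$, the law of $X_s$ is
\[
\rho(s,y)=\int_{\R^d} h_\lambda(0,x)\,p_\lambda^*(0,x;s,y)\dd x
= h_\lambda(s,y)\int_{\R^d} G_s(y-x)\dd x = h_\lambda(s,y).
\]
The factor $h_\lambda(0,x)$ cancels the denominator in $p_\lambda^*$, and the Gaussian integrates to one. At $s=T$ the same computation uses \eqref{terminal_eta} with $h_\lambda(T,\cdot)=\pi_\lambda$, which gives $\Law(X_T)=\pi_\lambda(y)\dd y$. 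A preliminary check is that $h_\lambda(0,\cdot)$ is a probability density. This holds because $\int G_T*\pi_\lambda=\int\pi_\lambda=1$ by Fubini, and positivity is clear.

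As an independent analytic check, I would verify that $h_\lambda$ solves the Fokker--Planck equation \eqref{FP} with drift $\mathbf u_\lambda^*=2\beta\nabla\log h_\lambda$. Indeed $\nabla\cdot(h_\lambda\mathbf u_\lambda^*)=2\beta\Delta h_\lambda$. Combined with the backward heat equation \eqref{h_lambda_heat}, $\partial_s h_\lambda=-\beta\Delta h_\lambda$, this gives
\[
\partial_s h_\lambda+\nabla\cdot(h_\lambda\mathbf u_\lambda^*)=\beta\Delta h_\lambda .
\]
This is the forward Fokker--Planck equation, now with the correct sign, and its initial datum is $h_\lambda(0,\cdot)$.

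The main obstacle is identifying the law of the solution of the SDE driven by the feedback $\mathbf u_\lambda^*$ with the kernel mixture, or equivalently with this solution of \eqref{FP}. Two points need care. First, the drift is singular as $s\uparrow T$, because it has the form $-(x-a_\lambda)/(T-s)$. Second, one needs well-posedness, or at least uniqueness, in the relevant class. I would rely on Lemma \ref{kernel_lemma}, which asserts that $p_\lambda^*$ is the transition kernel of the process with generator $\mathcal L_t^*$, that is, of the optimal controlled dynamics. For the endpoint, I would argue on $[0,s]$ with $s<T$, where $\mathbf u_\lambda^*$ is smooth, and then pass to $s\uparrow T$. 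This limit uses continuity of paths together with the convergence $h_\lambda(s,\cdot)\to\pi_\lambda$ in $L^1$, which follows from $\pi_\lambda\in L^1$ and the approximation-of-identity property of $G_{T-s}$.
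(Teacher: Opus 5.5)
Your proposal is correct and is essentially the paper's proof: integrate the Doob-transform kernel $p_\lambda^*(0,x;s,\cdot)$ from Lemma \ref{kernel_lemma} against $h_\lambda(0,x)$. The factor $h_\lambda(0,x)$ cancels, and the Gaussian $G_s$ integrates to one. Your added checks go beyond what the paper records but do not change the argument: that $h_\lambda(0,\cdot)$ is a probability density, that $h_\lambda$ solves \eqref{FP} with drift $\mathbf u_\lambda^*$, and the $s\uparrow T$ limit.
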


\begin{proof}
For $s \in [0,T]$, the law at time $s$ is
$$
\int_{\R^d} p_\lambda^*(0,x;s,\cdot)h_\lambda(0,x)\dd x
= h_\lambda(s,\cdot)\int_{\R^d} G_s(\cdot - x)\dd x
= h_\lambda(s,\cdot),
$$
by the definition of $p_\lambda^*$ and the heat semigroup property.
\end{proof}

\begin{remark}[Path-space reweighting]\label{path_space_remark}
There is also an equivalent path-space formulation behind the same formulas. Let $\W_{t,x}$ be Wiener measure on the trajectory space $C([t,T],\R^d)$ associated with the driftless diffusion $X_s = x + \sqrt{2\beta}(B_s - B_t)$. Then the value function may be rewritten as
$$
V_\lambda(t,x)
=
\inf \left\{ \E_{\Qbb}[f(X_T)] + \frac{2\lambda\beta}{T}\Ent(\Qbb \vert \W_{t,x}) \ \ \big\vert\ \ \Qbb \in \Prob\big( C([t,T],\R^d) \big) \right\},
$$
and the minimizer is the exponentially reweighted Brownian law
$$
\frac{\dd \Qbb_{\lambda,t,x}^*}{\dd \W_{t,x}}
=
\frac{\exp \big( - \alpha_\lambda f(X_T) \big)}{\E_{\W_{t,x}}\big[ \exp \big( - \alpha_\lambda f(X_T) \big) \big]}.
$$
This means that Brownian trajectories are reweighted according to their terminal value: paths ending at smaller values of $f(X_T)$ receive larger weight. This is the bridge with the reciprocal-diffusion and Schr\"odinger-F\"ollmer literature \cite{DaiJiaoKangLuYang23, DaiPra91, Follmer85, HuangJiaoKangLiaoLiuLiu21, TzenRaginsky19, ZhangChen22}.
\end{remark}

\section{Asymptotics and optimization meaning}\label{asymp_sec}

This is the main optimization section. The low-$\lambda$ regime is the global regime: the effective temperature $\frac{2\lambda\beta}{T}$ tends to $0$, the terminal Gibbs law concentrates on the global minimizer, and the drift becomes an affine attraction toward it. The regime $t \to T$ is local: it shows that the same finite-horizon drift reconnects with a scaled gradient descent near terminal time. We present the local regime first because its proof is immediate, then the low-temperature regime, and finally the nondegenerate Laplace expansion.

\subsection{Near-terminal regime}

The limit $t \to T$ explains the local meaning of the finite-horizon construction. The Gaussian factor in \eqref{terminal_eta} shrinks to a Dirac mass, so the optimizer stops averaging over distant candidate terminal points and recovers local first-order information.

\begin{proposition}\label{terminal_limit}
Fix $\lambda > 0$. Under Assumption \ref{basic_ass}, we have
$$
V_\lambda(t,x) \to f(x),
\qquad
\mathbf u_\lambda^*(t,x) \to - \frac{T}{\lambda}\nabla f(x),
$$
as $t \uparrow T$, locally uniformly in $x$.
\end{proposition}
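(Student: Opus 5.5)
The plan is to reduce both limits to the convergence of the heat convolutions $h_\lambda(t,\cdot)=G_{T-t}*\pi_\lambda$ and $\nabla_x h_\lambda(t,\cdot)$ as $t\uparrow T$. By \eqref{def_V_lambda} and the definition of $\Phi_\lambda$,
$$V_\lambda(t,x)=-\frac{2\lambda\beta}{T}\log h_\lambda(t,x)-\frac{2\lambda\beta}{T}\log M_\lambda .$$
By \eqref{optimal_feedback_formula},
$$\mathbf u_\lambda^*(t,x)=2\beta\,\frac{\nabla_x h_\lambda(t,x)}{h_\lambda(t,x)} .$$
We also have $-\frac{2\lambda\beta}{T}\log(M_\lambda\pi_\lambda)=f$ and $2\beta\nabla\log\pi_\lambda=-\frac{T}{\lambda}\nabla f$. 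It therefore suffices to show two things, locally uniformly in $x$: $h_\lambda(t,\cdot)\to\pi_\lambda$, and $\nabla h_\lambda(t,\cdot)\to\nabla\pi_\lambda=-\alpha_\lambda\pi_\lambda\nabla f$.

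For the gradient, I will use the averaged-gradient computation from the proof of Proposition \ref{three_forms}. Integration by parts gives $\nabla_x h_\lambda(t,\cdot)=G_{T-t}*\nabla\pi_\lambda$, so both quantities are heat convolutions $G_\tau*g$ with $\tau=T-t\downarrow0$. Here $g=\pi_\lambda$ or $g=\nabla\pi_\lambda=-\alpha_\lambda\pi_\lambda\nabla f$. Each such $g$ is continuous and belongs to $L^1(\R^d)$ by Assumption \ref{basic_ass} ii). Moreover, $f$ is bounded below, so $\pi_\lambda$ is bounded. The term $\nabla\pi_\lambda$ is not automatically bounded, so I handle it by splitting.

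\textbf{Key step, and main obstacle.} I need $G_\tau*g\to g$ uniformly on a compact set $K$ for continuous $g\in L^1$ that is not necessarily bounded. Fix $\delta>0$ and write
$$(G_\tau*g)(x)-g(x)=\int_{\Vert z\Vert\le\delta}G_\tau(z)\big(g(x-z)-g(x)\big)\dd z+\int_{\Vert z\Vert>\delta}G_\tau(z)g(x-z)\dd z-g(x)\int_{\Vert z\Vert>\delta}G_\tau(z)\dd z.$$
The first term is at most the modulus of continuity of $g$ on the $\delta$-neighborhood of $K$. The third term tends to zero because $g$ is bounded on $K$ and the Gaussian tail mass vanishes. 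For the second term, use $\sup_{\Vert z\Vert>\delta}G_\tau(z)\to0$ as $\tau\to0$ together with $g\in L^1$; this bounds it by $\Vert g\Vert_{L^1}\sup_{\Vert z\Vert>\delta}G_\tau(z)\to0$. This is exactly where the integrability hypotheses of Assumption \ref{basic_ass} are used.

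Finally, $\pi_\lambda>0$ is continuous, so it is bounded below by a positive constant on $K$. Hence $h_\lambda(t,\cdot)$ is uniformly bounded away from zero on $K$ for $t$ close to $T$. Continuity of $\log$ and of the quotient then transfers the uniform convergence to $V_\lambda(t,\cdot)\to f$ and $\mathbf u_\lambda^*(t,\cdot)\to2\beta\nabla\pi_\lambda/\pi_\lambda=-\frac{T}{\lambda}\nabla f$ on $K$.
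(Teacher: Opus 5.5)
Your proposal is correct and follows the paper's proof. You reduce both limits to the locally uniform convergences $G_{T-t}*\pi_\lambda\to\pi_\lambda$ and $G_{T-t}*\nabla\pi_\lambda\to\nabla\pi_\lambda$, using $\nabla\pi_\lambda\in L^1$ from Assumption~\ref{basic_ass}, and then pass through the logarithm and the quotient using positivity of $\pi_\lambda$ on compacts; your three-term splitting for continuous, possibly unbounded $L^1$ functions only makes explicit the approximate-identity step that the paper asserts without detail.
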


\begin{proof}
Since $h_\lambda(t,\cdot) = G_{T-t} * \pi_\lambda$, the family $h_\lambda(t,\cdot)$ converges locally uniformly to $\pi_\lambda$ as $t \uparrow T$. Because $\nabla \pi_\lambda \in L^1(\R^d)$ by Assumption \ref{basic_ass}, we also have
$\nabla h_\lambda(t,\cdot) = G_{T-t} * \nabla \pi_\lambda \to \nabla \pi_\lambda$
locally uniformly. Hence
$$
V_\lambda(t,x)
=
- \frac{2\lambda\beta}{T}\log h_\lambda(t,x) - \frac{2\lambda\beta}{T}\log M_\lambda
\longrightarrow
- \frac{2\lambda\beta}{T}\log \pi_\lambda(x) - \frac{2\lambda\beta}{T}\log M_\lambda
= f(x),
$$
locally uniformly. Likewise,
$$
\mathbf u_\lambda^*(t,x) = 2\beta \frac{\nabla h_\lambda(t,x)}{h_\lambda(t,x)} 
\longrightarrow
2\beta \frac{\nabla \pi_\lambda(x)}{\pi_\lambda(x)} = - \frac{T}{\lambda}\nabla f(x),
$$
locally uniformly.
\end{proof}

Thus, for fixed $\lambda$, the finite-horizon drift always recovers a scaled local gradient descent direction at the end of the interval. The nonlocal part of the optimizer is therefore concentrated away from terminal time.

\subsection{Low-temperature regime}

We now turn to the main global-optimization regime. The effective temperature of the terminal Gibbs law is $\displaystyle \frac{2\lambda\beta}{T}$, so the limit $\lambda \downarrow 0$ is a low-temperature limit. This is the regime in which the terminal law collapses on the global minimizer.

\begin{assumption}\label{unique_min}
The function $f$ has a unique minimizer $x^* \in \R^d$.
\end{assumption}

\begin{remark}[Local minima are allowed]\label{local_min_remark}
Assumption \ref{unique_min} requires only that the global minimizer be unique. The function $f$ may still possess arbitrarily many nonglobal local minima and saddle points. The results below therefore describe a global selection mechanism for the exact finite-horizon optimal control criterion.
\end{remark}

The first ingredient is the Laplace principle (see, for example, \cite{Hwang80,DemboZeitouni}) for the partition function $M_\lambda$.

\begin{lemma}\label{partition_laplace}
Under Assumption \ref{basic_ass}, we have $- \frac{2\lambda\beta}{T}\log M_\lambda \to f_*$ as $\lambda \downarrow 0$.
\end{lemma}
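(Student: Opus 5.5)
Write $\varepsilon = \frac{2\lambda\beta}{T} = \alpha_\lambda^{-1}$, so the claim reads $-\varepsilon\log\int_{\R^d} e^{-f/\varepsilon}\dd y \to f_*$ as $\varepsilon\downarrow 0$. I would prove matching upper and lower bounds for $\limsup$ and $\liminf$ separately, using only continuity of $f$, the fact that $f$ attains its infimum $f_*$ (coercivity plus continuity), and the integrability in Assumption~\ref{basic_ass}(ii). The uniqueness in Assumption~\ref{unique_min} is not needed.

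\emph{Upper bound on $-\varepsilon\log M_\lambda$.} Fix $\delta>0$ and a minimizer $x^*$. By continuity there is $r>0$ with $f\le f_*+\delta$ on the ball $B(x^*,r)$. Then
$$
M_\lambda \geq |B(x^*,r)|\, e^{-(f_*+\delta)/\varepsilon},
$$
hence
$$
-\varepsilon\log M_\lambda \le f_*+\delta-\varepsilon\log|B(x^*,r)|.
$$
Letting $\varepsilon\to0$ and then $\delta\to0$ gives $\limsup \le f_*$.

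\emph{Lower bound.} This is the step that needs care, because the domain is unbounded and we must control the tail uniformly in small $\varepsilon$. I would fix some $c>0$ (say $c=1$) and use that $e^{-cf}\in L^1$ by Assumption~\ref{basic_ass}(ii). For $\varepsilon\le 1/c$, write $1/\varepsilon = c + (1/\varepsilon - c)$ and use $f\ge f_*$:
$$
e^{-f/\varepsilon} = e^{-cf}\,e^{-(1/\varepsilon-c)f} \le e^{-cf}\,e^{-(1/\varepsilon-c)f_*}.
$$
Integrating gives
$$
M_\lambda\le \|e^{-cf}\|_{L^1}\,e^{-(1/\varepsilon-c)f_*},
$$
so
$$
-\varepsilon\log M_\lambda \ge (1-c\varepsilon)f_* - \varepsilon\log\|e^{-cf}\|_{L^1},
$$
which tends to $f_*$. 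Hence $\liminf\ge f_*$, and combined with the upper bound the lemma follows. The tail control looked like the obstacle, but the integrability assumption reduces it to this one-line factorization trick, so no truncation argument is required.
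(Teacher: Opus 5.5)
Your proof is correct and follows essentially the same route as the paper. The paper also takes $c=1$, using $e^{-\alpha_\lambda f}=e^{-(\alpha_\lambda-1)f}e^{-f}\leq e^{-(\alpha_\lambda-1)f_*}e^{-f}$ for the lower bound, and it uses a small ball around a minimizer on which $f\leq f_*+\varepsilon$ for the upper bound.
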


\begin{proof}
Recall that $\alpha_\lambda = \frac{T}{2\lambda\beta}$. Since $f \geq f_*$, for every $\alpha_\lambda \geq 1$ we have
$$
M_\lambda
=
\int_{\R^d} e^{-\alpha_\lambda f(y)}\dd y
=
\int_{\R^d} e^{-(\alpha_\lambda - 1)f(y)}e^{-f(y)}\dd y
\leq
e^{-(\alpha_\lambda - 1)f_*}\int_{\R^d} e^{-f(y)}\dd y.
$$
Hence
$$
- \frac{1}{\alpha_\lambda}\log M_\lambda \geq f_* - \frac{1}{\alpha_\lambda}\bigg( \log \int_{\R^d} e^{-f(y)}\dd y + f_* \bigg),
$$
and the right-hand side tends to $f_*$.

Conversely, fix $\varepsilon > 0$. By continuity of $f$ at any minimizer point $x^*$, there exists $\rho > 0$ such that $f(y) \leq f_* + \varepsilon$ for every $y \in B_\rho(x^*)$.
Therefore $M_\lambda \geq \vert B_\rho \vert e^{-\alpha_\lambda(f_* + \varepsilon)}$, which yields
$$
- \frac{1}{\alpha_\lambda}\log M_\lambda \leq f_* + \varepsilon - \frac{1}{\alpha_\lambda}\log \vert B_\rho \vert.
$$
Letting $\lambda \downarrow 0$ and then $\varepsilon \downarrow 0$ gives the conclusion.
\end{proof}

The next lemma strengthens the usual weak concentration statement by providing an exponential estimate for $\pi_\lambda$.

\begin{lemma}\label{gibbs_concentration}
Under Assumptions \ref{basic_ass} and \ref{unique_min}, for every $r > 0$ there exist constants $C_r > 0$, $c_r > 0$, and $\lambda_r > 0$ such that
$$
\pi_\lambda\big( \Vert y - x^* \Vert \geq r \big) \leq C_r e^{-c_r/\lambda}
\qquad
\text{for every } \lambda \in (0,\lambda_r].
$$
In particular, $\pi_\lambda(y)\dd y \Rightarrow \delta_{x^*}$ (narrow convergence) as $\lambda \downarrow 0$.
\end{lemma}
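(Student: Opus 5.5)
The plan is to compare the Gibbs weight of the far region $\{\Vert y-x^*\Vert\geq r\}$ with the weight of a small ball around $x^*$. Two ingredients are needed: a strictly positive energy gap away from $x^*$, and the splitting trick from Lemma \ref{partition_laplace}, which pulls out a fixed integrable factor $e^{-f}$.

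\textbf{Energy gap.} Fix $r>0$ and set
$$
\delta_r = \inf_{\Vert y-x^*\Vert\geq r} f(y) - f_*.
$$
We need $\delta_r>0$. By coercivity (Assumption \ref{basic_ass}\,i)), there is $R>r$ with $f(y)\geq f_*+1$ for $\Vert y\Vert\geq R$. On the compact annulus $\{r\leq \Vert y-x^*\Vert\}\cap\{\Vert y\Vert\leq R\}$, the continuous function $f$ attains its minimum. That minimum is strictly larger than $f_*$ by Assumption \ref{unique_min}. Hence $\delta_r>0$. This is the only place where uniqueness of the minimizer enters, and it is the step that requires some care, since without coercivity the infimum could be approached at infinity.

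\textbf{Numerator bound.} For $\alpha_\lambda\geq 1$, write $e^{-\alpha_\lambda f}=e^{-(\alpha_\lambda-1)f}e^{-f}$. On the far region this gives
$$
\int_{\Vert y-x^*\Vert\geq r} e^{-\alpha_\lambda f}\dd y \leq e^{-(\alpha_\lambda-1)(f_*+\delta_r)}\int_{\R^d}e^{-f}\dd y .
$$
The last integral is finite by Assumption \ref{basic_ass}\,ii) with $c=1$.

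\textbf{Denominator bound.} As in Lemma \ref{partition_laplace}, choose $\rho>0$ such that $f\leq f_*+\delta_r/2$ on $B_\rho(x^*)$. Then
$$
M_\lambda\geq \vert B_\rho\vert e^{-\alpha_\lambda(f_*+\delta_r/2)}.
$$

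\textbf{Conclusion.} Dividing the two bounds, the $f_*$ terms combine to the constant $e^{f_*}$, and the $\alpha_\lambda$-dependence leaves
$$
\pi_\lambda\big(\Vert y-x^*\Vert\geq r\big)\leq \frac{e^{f_*+\delta_r}\int e^{-f}}{\vert B_\rho\vert}\, e^{-\alpha_\lambda\delta_r/2}.
$$
Since $\alpha_\lambda=\frac{T}{2\lambda\beta}$, this has the required form with
$$
c_r=\frac{T\delta_r}{4\beta},\qquad C_r=\frac{e^{f_*+\delta_r}\int e^{-f}}{\vert B_\rho\vert},
$$
and $\lambda_r$ chosen so that $\alpha_{\lambda_r}\geq1$.

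\textbf{Narrow convergence.} Let $\varphi$ be bounded and continuous, and fix $\varepsilon>0$. Choose $r$ such that $\vert\varphi(y)-\varphi(x^*)\vert\leq\varepsilon$ on $B_r(x^*)$. Then
$$
\Big\vert\int\varphi\,\pi_\lambda-\varphi(x^*)\Big\vert\leq \varepsilon+2\Vert\varphi\Vert_\infty C_re^{-c_r/\lambda},
$$
which tends to $\varepsilon$ as $\lambda\downarrow0$. Since $\varepsilon$ is arbitrary, $\pi_\lambda(y)\dd y\Rightarrow\delta_{x^*}$.
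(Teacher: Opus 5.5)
Your proof is correct and matches the paper's argument essentially line by line: the same energy gap $m_r=f_*+\delta_r$, the same splitting $e^{-\alpha_\lambda f}=e^{-(\alpha_\lambda-1)f}e^{-f}$ on the far region, and the same small-ball lower bound on $M_\lambda$ (your $\delta_r/2$ is one admissible choice of the paper's $\varepsilon_r\in(0,m_r-f_*)$). Your use of coercivity to justify $\delta_r>0$ fills in a step the paper asserts from continuity and uniqueness alone. Your explicit narrow-convergence argument is the same one the paper uses later in Lemma~\ref{uniform_test}.
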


\begin{proof}
Fix $r > 0$. Since $x^*$ is the unique minimizer and $f$ is continuous,
$$
m_r = \inf_{\Vert y - x^* \Vert \geq r} f(y) > f_*.
$$
Choose $\varepsilon_r \in \big( 0, m_r - f_* \big)$ and then $\rho_r \in (0,r)$ such that $f(y) \leq f_* + \varepsilon_r$ for every $\Vert y - x^* \Vert \leq \rho_r$.
As in the proof of Lemma \ref{partition_laplace}, $M_\lambda \geq \vert B_{\rho_r} \vert e^{-\alpha_\lambda(f_* + \varepsilon_r)}$.
For $\lambda$ small enough we have $\alpha_\lambda \geq 1$, and then
$$
\int_{\Vert y - x^* \Vert \geq r} e^{-\alpha_\lambda f(y)}\dd y
= \int_{\Vert y - x^* \Vert \geq r} e^{-(\alpha_\lambda - 1)f(y)}e^{-f(y)}\dd y 
\leq e^{-(\alpha_\lambda - 1)m_r}\int_{\R^d} e^{-f(y)}\dd y.
$$
Therefore
$$
\pi_\lambda\big( \Vert y - x^* \Vert \geq r \big)
\leq \frac{e^{-(\alpha_\lambda - 1)m_r}\int_{\R^d} e^{-f(y)}\dd y}{\vert B_{\rho_r} \vert e^{-\alpha_\lambda(f_* + \varepsilon_r)}} 
= \frac{e^{m_r}\int_{\R^d} e^{-f(y)}\dd y}{\vert B_{\rho_r} \vert} \exp \bigg( - \alpha_\lambda\big( m_r - f_* - \varepsilon_r \big) \bigg).
$$
Since $\displaystyle \alpha_\lambda = \frac{T}{2\lambda\beta}$ and $m_r - f_* - \varepsilon_r > 0$, this gives the desired estimate.
\end{proof}

To convert the concentration of $\pi_\lambda$ into uniform statements in $(t,x)$ on compact subsets of $[0,T) \times \R^d$, we use the following elementary testing lemma.

\begin{lemma}\label{uniform_test}
Let $\mu_n$ be a sequence of probability measures on $\R^d$ converging narrowly to $\delta_{x^*}$. Let $\mathscr F$ be an equibounded family of real-valued functions on $\R^d$ that is equicontinuous at $x^*$. Then
$$
\sup_{\varphi \in \mathscr F} \bigg\vert \int_{\R^d} \varphi(y)\mu_n(\dd y) - \varphi(x^*) \bigg\vert \to 0
\qquad
\text{as } n \to +\infty.
$$
\end{lemma}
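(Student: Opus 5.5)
The plan is the standard $\varepsilon$--$\delta$ splitting argument, made uniform over $\mathscr F$ by the two hypotheses. Let $K := \sup_{\varphi \in \mathscr F}\sup_{y}\vert\varphi(y)\vert < +\infty$ be the equibound. Fix $\varepsilon > 0$. By equicontinuity at $x^*$, choose $\delta > 0$ such that
$$
\vert\varphi(y) - \varphi(x^*)\vert \leq \varepsilon
\qquad
\text{for every } \varphi \in \mathscr F \text{ and every } \Vert y - x^*\Vert < \delta .
$$
Since each $\mu_n$ is a probability measure, we write
$$
\int_{\R^d}\varphi\,\dd\mu_n - \varphi(x^*) = \int_{\R^d}\big(\varphi(y) - \varphi(x^*)\big)\mu_n(\dd y).
$$
We then split this integral into the ball $B_\delta(x^*)$ and its complement.

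On the ball, the integrand is bounded by $\varepsilon$, so that part contributes at most $\varepsilon$. On the complement, the integrand is bounded by $2K$, so that part contributes at most $2K\,\mu_n\big(\Vert y - x^*\Vert \geq \delta\big)$. Both bounds hold uniformly in $\varphi \in \mathscr F$. Hence
$$
\sup_{\varphi\in\mathscr F}\bigg\vert \int_{\R^d}\varphi\,\dd\mu_n - \varphi(x^*)\bigg\vert \leq \varepsilon + 2K\,\mu_n\big(\Vert y - x^*\Vert \geq \delta\big).
$$

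It remains to show that $\mu_n\big(\Vert y - x^*\Vert \geq \delta\big) \to 0$. This follows from narrow convergence to $\delta_{x^*}$ via the portmanteau theorem. The set $F = \{\Vert y - x^*\Vert \geq \delta\}$ is closed, so $\limsup_n \mu_n(F) \leq \delta_{x^*}(F) = 0$. If one prefers to avoid the portmanteau theorem, test instead against the bounded continuous function $\psi(y) = \min\big(1, \Vert y - x^*\Vert/\delta\big)$. It satisfies $\psi \geq \mathbf 1_F$ and $\psi(x^*) = 0$, so $\mu_n(F) \leq \int\psi\,\dd\mu_n \to 0$. Taking $\limsup_n$ then gives a bound of $\varepsilon$, and since $\varepsilon > 0$ was arbitrary, the supremum tends to zero.

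No step is a real obstacle. The only point needing care is that $\delta$ must be chosen uniformly over $\mathscr F$, which is exactly what equicontinuity at $x^*$ provides. Likewise, the tail bound must not depend on $\varphi$, which is guaranteed by the equiboundedness constant $K$. No measurability issue arises for the supremum, because the estimate is derived pointwise in $\varphi$ before the supremum is taken.
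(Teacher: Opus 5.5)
Your proposal is correct and follows essentially the same argument as the paper: a radius chosen uniformly over $\mathscr F$ by equicontinuity at $x^*$, a uniform tail bound $2M\,\mu_n(\Vert y-x^*\Vert\geq r)$ from equiboundedness, and narrow convergence to $\delta_{x^*}$ to kill the tail. Your explicit justification of the tail step (portmanteau on a closed set, or the test function $\psi(y)=\min(1,\Vert y-x^*\Vert/\delta)$) spells out a detail the paper leaves implicit.
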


\begin{proof}
Fix $\varepsilon > 0$. By equicontinuity at $x^*$, there exists $r > 0$ such that
$$
\sup_{\varphi \in \mathscr F} \sup_{\Vert y - x^* \Vert \leq r} \vert \varphi(y) - \varphi(x^*) \vert \leq \varepsilon.
$$
Let $M = \sup_{\varphi \in \mathscr F} \Vert \varphi \Vert_{L^\infty}$. Then
$$
\sup_{\varphi \in \mathscr F} \bigg\vert \int_{\R^d} \varphi(y)\mu_n(\dd y) - \varphi(x^*) \bigg\vert
\leq \sup_{\varphi \in \mathscr F} \int_{\R^d} \vert \varphi(y) - \varphi(x^*) \vert \mu_n(\dd y)
\leq \varepsilon + 2M\mu_n\big( \Vert y - x^* \Vert > r \big).
$$
Since $\mu_n$ converges narrowly to $\delta_{x^*}$, the second term tends to $0$. Letting $\varepsilon \to 0$ concludes the proof.
\end{proof}

The next proposition makes the global-selection mechanism fully explicit at the level of the conditional terminal law.

\begin{proposition}\label{conditional_mass_concentration}
Under Assumptions \ref{basic_ass} and \ref{unique_min}, for every compact subset $K \Subset [0,T) \times \R^d$ and every $r > 0$, there exist constants $C_{K,r} > 0$, $c_{K,r} > 0$, and $\lambda_{K,r} > 0$ such that
$$
\sup_{(t,x) \in K} \eta_{\lambda,t,x}\big( \Vert y - x^* \Vert \geq r \big) \leq C_{K,r} e^{-c_{K,r}/\lambda}
\qquad
\text{for every } \lambda \in (0,\lambda_{K,r}].
$$
\end{proposition}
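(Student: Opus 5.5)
We follow the strategy of Lemma \ref{gibbs_concentration}, working with the explicit ratio
$$
\eta_{\lambda,t,x}\big(\Vert y-x^*\Vert\geq r\big)
=\frac{\int_{\Vert y-x^*\Vert\geq r} G_{T-t}(x-y)e^{-\alpha_\lambda f(y)}\dd y}{\int_{\R^d} G_{T-t}(x-y)e^{-\alpha_\lambda f(y)}\dd y}.
$$
The idea is to bound the numerator from above and the denominator from below, each uniformly over $(t,x)\in K$. Since $K$ is compact in $[0,T)\times\R^d$, there exist $\tau_K>0$ and $R_K>0$ with $T-t\geq\tau_K$ and $\Vert x\Vert\leq R_K$ for all $(t,x)\in K$. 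We also use $T-t\leq T$.

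\emph{Numerator.} Use the crude bound $G_{T-t}(x-y)\leq (4\pi\beta\tau_K)^{-d/2}$. For $\alpha_\lambda\geq1$, split $e^{-\alpha_\lambda f}=e^{-(\alpha_\lambda-1)f}e^{-f}$ exactly as before. This gives
$$
\text{numerator}\leq (4\pi\beta\tau_K)^{-d/2}e^{-(\alpha_\lambda-1)m_r}\int_{\R^d}e^{-f},
$$
where $m_r=\inf_{\Vert y-x^*\Vert\geq r}f>f_*$ by Assumption \ref{unique_min}, continuity and coercivity.

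\emph{Denominator.} Choose $\varepsilon_r\in(0,m_r-f_*)$, for instance $\varepsilon_r=(m_r-f_*)/2$. Then choose $\rho_r\in(0,r)$ such that $f\leq f_*+\varepsilon_r$ on $B_{\rho_r}(x^*)$. Restrict the integral to $B_{\rho_r}(x^*)$. There, $\Vert x-y\Vert\leq R_K+\Vert x^*\Vert+\rho_r=:D_K$, so
$$
G_{T-t}(x-y)\geq (4\pi\beta T)^{-d/2}\exp\Big(-\frac{D_K^2}{4\beta\tau_K}\Big).
$$
Hence the denominator is at least $c'_K\,\vert B_{\rho_r}\vert\,e^{-\alpha_\lambda(f_*+\varepsilon_r)}$, with $c'_K$ independent of $\lambda$ and of $(t,x)\in K$.

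\emph{Conclusion.} Dividing the two bounds gives
$$
\sup_K\eta_{\lambda,t,x}\big(\Vert y-x^*\Vert\geq r\big)\leq C_{K,r}\,e^{-\alpha_\lambda(m_r-f_*-\varepsilon_r)},
\qquad \alpha_\lambda=\frac{T}{2\lambda\beta}.
$$
This yields the claim with $c_{K,r}=T(m_r-f_*-\varepsilon_r)/(2\beta)$ and $\lambda_{K,r}=T/(2\beta)$, the threshold at which $\alpha_\lambda\geq1$.

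\emph{Main difficulty.} The essential point is that the Gaussian weight does not degenerate on $K$. We need a uniform upper bound on $G_{T-t}$, which relies on $t$ staying away from $T$. We also need a uniform lower bound near $x^*$, which relies on $x$ being bounded and $T-t$ bounded below. The penalization by distance in $\mathcal E_{\lambda,t,x}$ only costs an $\alpha_\lambda$-independent constant. This is what makes the statement uniform on compacts only, and not near $t=T$.
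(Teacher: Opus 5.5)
Your proposal is correct and follows essentially the same route as the paper: the same ratio with $M_\lambda$ cancelled, the uniform bound $G_{T-t}\leq(4\pi\beta\tau_K)^{-d/2}$ together with the splitting $e^{-\alpha_\lambda f}=e^{-(\alpha_\lambda-1)f}e^{-f}$ in the numerator, and a uniform positive lower bound on $G_{T-t}(x-y)$ over $K\times B_{\rho_r}(x^*)$ in the denominator. The only differences are cosmetic: you write that lower bound explicitly as $(4\pi\beta T)^{-d/2}e^{-D_K^2/(4\beta\tau_K)}$ instead of appealing to compactness, and you make $c_{K,r}$ and $\lambda_{K,r}$ explicit.
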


\begin{proof}
Fix a compact set $K \Subset [0,T) \times \R^d$ and $r > 0$. There exists $\delta_K > 0$ such that $T-t \geq \delta_K$ for every $(t,x) \in K$, and the $x$-projection of $K$ is contained in some ball $B_R(0)$.
As above, let
$$
m_r = \inf_{\Vert y - x^* \Vert \geq r} f(y) > f_*.
$$
Choose $\varepsilon_r \in \big( 0, m_r - f_* \big)$ and then $\rho_r \in (0,r)$ such that $f(y) \leq f_* + \varepsilon_r$ whenever $\Vert y - x^* \Vert \leq \rho_r$.
Write
$$
D_{\lambda,t,x} = \int_{\R^d} G_{T-t}(x-y)e^{-\alpha_\lambda f(y)}\dd y.
$$
Since $(t,x,y)$ ranges over the compact set $\big\{ (t,x,y) \ \big\vert\ (t,x) \in K,\ \Vert y - x^* \Vert \leq \rho_r \big\}$, and $T-t \geq \delta_K$, the heat kernel admits a positive lower bound there:
$$
m_{K,r} = \inf \big\{ G_{T-t}(x-y) \ \big\vert\ (t,x) \in K,\ \Vert y - x^* \Vert \leq \rho_r \big\} > 0.
$$
Hence $D_{\lambda,t,x} \geq m_{K,r}\vert B_{\rho_r} \vert e^{-\alpha_\lambda(f_* + \varepsilon_r)}$ for all $(t,x) \in K$.

On the other hand, for $\lambda$ small enough so that $\alpha_\lambda \geq 1$,
\begin{align*}
\int_{\Vert y - x^* \Vert \geq r} G_{T-t}(x-y)e^{-\alpha_\lambda f(y)}\dd y
&\leq
(4\pi\beta\delta_K)^{-d/2}\int_{\Vert y - x^* \Vert \geq r} e^{-\alpha_\lambda f(y)}\dd y \\
&\leq
(4\pi\beta\delta_K)^{-d/2}e^{-(\alpha_\lambda - 1)m_r}\int_{\R^d} e^{-f(y)}\dd y.
\end{align*}
Dividing by the lower bound for $D_{\lambda,t,x}$ yields
$$
\eta_{\lambda,t,x}\big( \Vert y - x^* \Vert \geq r \big)
\leq
\frac{(4\pi\beta\delta_K)^{-d/2}e^{m_r}\int_{\R^d} e^{-f(y)}\dd y}{m_{K,r}\vert B_{\rho_r} \vert}
\exp \left( - \alpha_\lambda \big( m_r - f_* - \varepsilon_r \big) \right),
$$
uniformly in $(t,x) \in K$. Since $\displaystyle \alpha_\lambda = \frac{T}{2\lambda\beta}$, the conclusion follows.
\end{proof}

\begin{theorem}\label{conditional_low_temp}
Under Assumptions \ref{basic_ass} and \ref{unique_min}, the probability measures $\eta_{\lambda,t,x}(y)\dd y$ converge to $\delta_{x^*}$ as $\lambda \downarrow 0$ uniformly for $(t,x)$ in compact subsets of $[0,T) \times \R^d$, i.e.,
$$
\sup_{(t,x) \in K} \bigg\vert \int_{\R^d} \varphi(y)\eta_{\lambda,t,x}(y)\dd y - \varphi(x^*) \bigg\vert \to 0
\qquad
\text{as } \lambda \downarrow 0,
$$
for every compact subset $K \Subset [0,T) \times \R^d$ and every bounded continuous function $\varphi : \R^d \to \R$.
\end{theorem}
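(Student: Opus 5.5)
The plan is to combine Proposition \ref{conditional_mass_concentration} with the testing argument of Lemma \ref{uniform_test}. The family $\mathscr F$ will be replaced by a single function $\varphi$, and the measures by the family $\{\eta_{\lambda,t,x}\}_{(t,x)\in K}$. Uniformity in $(t,x)$ will then come from the tail estimate alone, which is already uniform over $K$.

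Fix a compact $K \Subset [0,T)\times\R^d$, a bounded continuous $\varphi$ with $M = \Vert\varphi\Vert_{L^\infty}$, and $\varepsilon>0$. By continuity of $\varphi$ at $x^*$, choose $r>0$ such that $\vert\varphi(y)-\varphi(x^*)\vert\leq\varepsilon$ whenever $\Vert y-x^*\Vert< r$. Since each $\eta_{\lambda,t,x}$ is a probability density, for every $(t,x)\in K$ we can split the integral over the ball $B_r(x^*)$ and its complement:
$$
\bigg\vert \int_{\R^d}\varphi(y)\eta_{\lambda,t,x}(y)\dd y-\varphi(x^*)\bigg\vert
\leq \int_{\R^d}\vert\varphi(y)-\varphi(x^*)\vert\,\eta_{\lambda,t,x}(y)\dd y
\leq \varepsilon + 2M\,\eta_{\lambda,t,x}\big(\Vert y-x^*\Vert\geq r\big).
$$
Taking the supremum over $(t,x)\in K$ and invoking Proposition \ref{conditional_mass_concentration} with this $K$ and $r$, the second term is at most $2M C_{K,r}e^{-c_{K,r}/\lambda}$ for $\lambda\leq\lambda_{K,r}$. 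This tends to $0$ as $\lambda\downarrow0$. Hence the $\limsup$ of the supremum is at most $\varepsilon$, and letting $\varepsilon\downarrow0$ concludes.

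There is no real obstacle: all the substantive work, namely the uniform lower bound on the heat kernel near $x^*$ (which requires $T-t\geq\delta_K$) and the Laplace-type tail bound, is already contained in Proposition \ref{conditional_mass_concentration}. The one point to state explicitly is that only continuity of $\varphi$ at the single point $x^*$ and boundedness of $\varphi$ are used, so no uniform continuity of $\varphi$ is needed despite the supremum over $K$.
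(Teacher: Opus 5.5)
Your proof is correct and matches the paper's argument: the same split based on continuity of $\varphi$ at $x^*$ bounds the error by $\varepsilon + 2\Vert\varphi\Vert_{L^\infty}\sup_{(t,x)\in K}\eta_{\lambda,t,x}(\Vert y-x^*\Vert\geq r)$. As in the paper, the uniformity over $K$ comes entirely from the tail estimate of Proposition \ref{conditional_mass_concentration}.
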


\begin{proof}
Fix $K \Subset [0,T) \times \R^d$, a bounded continuous function $\varphi$, and $\varepsilon > 0$. By continuity of $\varphi$ at $x^*$, there exists $r > 0$ such that $\vert \varphi(y) - \varphi(x^*) \vert \leq \varepsilon$ whenever $\Vert y - x^* \Vert \leq r$.
Then
\begin{align*}
\sup_{(t,x) \in K} \bigg\vert \int_{\R^d} \varphi(y)\eta_{\lambda,t,x}(y)\dd y - \varphi(x^*) \bigg\vert
&\leq
\sup_{(t,x) \in K}\int_{\R^d} \vert \varphi(y) - \varphi(x^*) \vert \eta_{\lambda,t,x}(y)\dd y \\
&\leq
\varepsilon + 2\Vert \varphi \Vert_{L^\infty} \sup_{(t,x) \in K}\eta_{\lambda,t,x}\big( \Vert y - x^* \Vert \geq r \big).
\end{align*}
The second term tends to $0$ by Proposition \ref{conditional_mass_concentration}. Letting $\varepsilon \downarrow 0$ concludes the proof.
\end{proof}

Theorem \ref{conditional_low_temp} is the global-optimization content of the finite-horizon criterion. Because Assumption \ref{unique_min} allows any number of nonglobal local minima, the theorem says that these local traps disappear in the low-temperature limit at the level of the exact continuous-time criterion: conditionally on the current state $(t,x)$, the terminal law still collapses onto the global minimizer.

We now pass from the conditional law to the drift field and the value function themselves.

\begin{theorem}\label{small_lambda}
Under Assumptions \ref{basic_ass} and \ref{unique_min}, we have
$$
V_\lambda(t,x) \to f_*,
\qquad
a_\lambda(t,x) \to x^*,
\qquad
\mathbf u_\lambda^*(t,x) \to - \frac{x - x^*}{T-t},
$$
as $\lambda \downarrow 0$, uniformly on any compact subset $K \Subset [0,T) \times \R^d$.
\end{theorem}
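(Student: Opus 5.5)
The three limits are proved in the order $a_\lambda$, then $\mathbf u_\lambda^*$, then $V_\lambda$. The first two follow from the exponential tail bound of Proposition \ref{conditional_mass_concentration}; the third is a Laplace-type argument.

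\emph{Barycenter.} We cannot apply Theorem \ref{conditional_low_temp} directly, since $\varphi(y)=y$ is unbounded. We therefore split, for fixed $r>0$,
$$
a_\lambda(t,x)-x^* = \int_{\Vert y-x^*\Vert< r}(y-x^*)\,\eta_{\lambda,t,x}(y)\dd y + \int_{\Vert y-x^*\Vert\geq r}(y-x^*)\,\eta_{\lambda,t,x}(y)\dd y.
$$
The first term is bounded by $r$. For the tail, we rerun the proof of Proposition \ref{conditional_mass_concentration} with the extra weight $\Vert y-x^*\Vert$. The denominator $D_{\lambda,t,x}$ keeps its lower bound $m_{K,r}\vert B_{\rho_r}\vert e^{-\alpha_\lambda(f_*+\varepsilon_r)}$. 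The numerator is at most
$$
(4\pi\beta\delta_K)^{-d/2}e^{-(\alpha_\lambda-1)m_r}\int_{\R^d}\Vert y-x^*\Vert e^{-f(y)}\dd y,
$$
where we use $\alpha_\lambda\geq 1$ as before. The remaining integral is finite: coercivity together with $e^{-cf}\in L^1$ for every $c$ gives integrability of polynomial moments, for instance by writing $e^{-f}=e^{-f/2}e^{-f/2}$ and using that $f$ grows at least like a positive constant times a function tending to infinity. This is the main technical point, and if needed I would add a short lemma asserting $\int(1+\Vert y\Vert^2)e^{-cf}<\infty$. The tail is then $O(e^{-c/\lambda})$ uniformly on $K$, so $\limsup_{\lambda\downarrow 0}\sup_K\Vert a_\lambda-x^*\Vert\leq r$ for every $r>0$.

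\emph{Drift.} This is immediate from the barycentric formula \eqref{def_a_lambda}. Since $T-t\geq\delta_K$ on $K$,
$$
\Big\Vert \mathbf u_\lambda^*(t,x)+\frac{x-x^*}{T-t}\Big\Vert = \frac{\Vert a_\lambda(t,x)-x^*\Vert}{T-t}\leq \frac{1}{\delta_K}\,\Vert a_\lambda(t,x)-x^*\Vert\to 0
$$
uniformly on $K$.

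\emph{Value function.} We use \eqref{direct_value_formula}, $V_\lambda=-\alpha_\lambda^{-1}\log D_{\lambda,t,x}$.
\begin{itemize}
\item Upper bound: $D_{\lambda,t,x}\geq m_{K,r}\vert B_{\rho_r}\vert e^{-\alpha_\lambda(f_*+\varepsilon)}$, which gives $V_\lambda\leq f_*+\varepsilon+O(\lambda)$ uniformly on $K$.
\item Lower bound: $G_{T-t}\leq(4\pi\beta\delta_K)^{-d/2}$, and, as in Lemma \ref{partition_laplace}, $\int e^{-\alpha_\lambda f}\leq e^{-(\alpha_\lambda-1)f_*}\int e^{-f}$. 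This gives $V_\lambda\geq f_*-O(\lambda)$.
\end{itemize}
Letting $\lambda\downarrow 0$ and then $\varepsilon\downarrow 0$ yields uniform convergence of $V_\lambda$ to $f_*$ on $K$.
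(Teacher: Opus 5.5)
Your reduction of the drift limit to the barycenter limit and your value-function argument are correct. The direct two-sided bound on $D_{\lambda,t,x}$ is a valid substitute for the paper's splitting $V_\lambda=-\frac{2\lambda\beta}{T}\log h_\lambda-\frac{2\lambda\beta}{T}\log M_\lambda$ combined with Lemma \ref{partition_laplace}.

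The gap is in the barycenter step. After bounding $G_{T-t}(x-y)\le(4\pi\beta\delta_K)^{-d/2}$, you need $\int_{\R^d}\Vert y-x^*\Vert e^{-f(y)}\dd y<\infty$. This does \emph{not} follow from Assumption \ref{basic_ass}. Integrability of $e^{-cf}$ for all $c>0$ only controls the volume of the sublevel sets $\{f\le s\}$. Coercivity only says these sets are bounded; it gives no rate on their diameter, so thin wells may sit arbitrarily far out.

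Concretely, take $d=1$ and let $f(y)=y^2$ except on the intervals $\vert y-y_n\vert\le 2$, with $y_n=e^{n^2}$ and $n\ge 2$. There set $f(y)=n+(y^2-n)\phi(\vert y-y_n\vert-1)$, where $\phi$ is the standard $C^\infty$ step: $0$ on $(-\infty,0]$, $1$ on $[1,\infty)$, and flat to infinite order at $0$. One checks that each well contributes $\mathrm{O}\big((1+c^{-1})n^4e^{-cn}\big)$ to $\int(1+\vert f'\vert+\vert f''\vert)e^{-cf}$. Hence Assumption \ref{basic_ass} holds, with unique minimizer $x^*=0$. Yet $\int\vert y\vert e^{-cf(y)}\dd y\ge\sum_n 2y_ne^{-cn}=+\infty$ for every $c>0$. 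So the auxiliary lemma $\int(1+\Vert y\Vert^2)e^{-cf}<\infty$ that you propose to add is false in general, and changing the splitting exponent does not help.

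The repair is to let the heat kernel absorb the linear weight instead of discarding it. We have $\Vert y-x^*\Vert\le\Vert y-x\Vert+\Vert x-x^*\Vert$ and $\sup_{z}\Vert z\Vert e^{-\Vert z\Vert^2/(4\beta\tau)}=\sqrt{2\beta\tau}\,e^{-1/2}$. Hence $C_K=\sup\{\Vert y-x^*\Vert\,G_{T-t}(x-y) : (t,x)\in K,\ y\in\R^d\}$ is finite. The tail numerator is then at most $C_Ke^{-(\alpha_\lambda-1)m_r}\int_{\R^d}e^{-f}$. The rest of your argument goes through unchanged, with an exponentially small tail and no moment condition on $e^{-f}$.

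This is exactly the mechanism of the paper's proof. It writes $a_\lambda$ as the ratio of $\int y_iG_{T-t}(x-y)\pi_\lambda(y)\dd y$ and $h_\lambda(t,x)$. It notes that the amplitudes $y_iG_{T-t}(x-y)$ and $G_{T-t}(x-y)$ are equibounded and equicontinuous over $(t,x)\in K$. It then applies Lemmas \ref{gibbs_concentration} and \ref{uniform_test} to numerator and denominator separately, and divides using the positive lower bound of $G_{T-t}(x-x^*)$ on $K$.
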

\begin{proof}
Since $K \Subset [0,T) \times \R^d$, there exists $\delta_K > 0$ such that $T-t \geq \delta_K$ for all $(t,x) \in K$, and the $x$-projection of $K$ is compact. For $(t,x) \in K$, define
$g_{t,x}(y) = G_{T-t}(x-y)$ and $b_{t,x}^i(y) = y_i G_{T-t}(x-y)$, for $i = 1,\dots,d$.
Since $K$ is compact and $T-t \geq \delta_K$, the families
$\mathscr G_K = \big\{ g_{t,x} \ \big\vert\ (t,x) \in K \big\}$
and
$\mathscr B_K^i = \big\{ b_{t,x}^i \ \big\vert\ (t,x) \in K \big\}$
are equibounded and equicontinuous on $\R^d$. By Lemmas \ref{gibbs_concentration} and \ref{uniform_test},
\begin{equation}\label{uniform_h}
\sup_{(t,x) \in K} \vert h_\lambda(t,x) - G_{T-t}(x-x^*) \vert \to 0,
\end{equation}
and, for each $i = 1,\dots,d$,
\begin{equation}\label{uniform_num}
\sup_{(t,x) \in K} \bigg\vert \int_{\R^d} y_i G_{T-t}(x-y)\pi_\lambda(y)\dd y - x_i^* G_{T-t}(x-x^*) \bigg\vert \to 0.
\end{equation}
Since $G_{T-t}(x-x^*)$ is strictly positive on $K$, \eqref{uniform_h} and \eqref{uniform_num} imply
\begin{equation}\label{eq:alambda-limit}
\sup_{(t,x) \in K} \Vert a_\lambda(t,x) - x^* \Vert \to 0.
\end{equation}
The barycentric representation \eqref{def_a_lambda} then gives
$$
\sup_{(t,x) \in K} \bigg\Vert \mathbf u_\lambda^*(t,x) + \frac{x - x^*}{T-t} \bigg\Vert \to 0.
$$
Finally,
$\displaystyle V_\lambda(t,x) = - \frac{2\lambda\beta}{T}\log h_\lambda(t,x) - \frac{2\lambda\beta}{T}\log M_\lambda$.
By Lemma \ref{partition_laplace}, $\displaystyle - \frac{2\lambda\beta}{T}\log M_\lambda \to f_*$.
By \eqref{uniform_h}, $h_\lambda(t,x)$ converges uniformly on $K$ to the strictly positive function $G_{T-t}(x-x^*)$, and therefore $\log h_\lambda$ is uniformly bounded on $K$ for all sufficiently small $\lambda$. Since $\displaystyle \frac{2\lambda\beta}{T} \to 0$, we conclude that
$$
\sup_{(t,x) \in K} \bigg\vert \frac{2\lambda\beta}{T}\log h_\lambda(t,x) \bigg\vert \to 0.
$$
Hence $V_\lambda \to f_*$ uniformly on $K$.
\end{proof}

Theorem \ref{small_lambda} should be read as the field-level counterpart of Theorem \ref{conditional_low_temp}. Away from the terminal singularity, the exact finite-horizon drift converges uniformly on compact sets to the affine field $\displaystyle x \mapsto - \frac{x - x^*}{T-t}$, which points toward the global minimizer. This is the stochastic feedback analogue of Corollary \ref{warmup_small_lambda}.

\begin{remark}[Non-commutativity of the two limits]\label{non_commute}
The two asymptotic regimes of this section do not commute. For fixed $\lambda > 0$, Proposition \ref{terminal_limit} gives $\mathbf u_\lambda^*(t,x) \to - \frac{T}{\lambda}\nabla f(x)$ as $t \uparrow T$. For fixed $t < T$, Theorem \ref{small_lambda} gives $\displaystyle \mathbf u_\lambda^*(t,x) \to - \frac{x - x^*}{T-t}$ as $\lambda \downarrow 0$. At a point $x \neq x^*$ where $\nabla f(x) \neq 0$, these two limits are generically different and point in different directions. At the global minimizer $x = x^*$, both limits vanish. In other words, the global mechanism (low $\lambda$) and the local mechanism (near terminal time) can compete, and the behavior of the drift when $\lambda \downarrow 0$ and $t \uparrow T$ simultaneously depends on the relative rates of the two limits.
\end{remark}

We now translate these pointwise statements into assertions about the optimally controlled process itself.

\begin{corollary}\label{terminal_process_concentration}
Let $X^\lambda$ be the optimally controlled process, started from an initial law $\rho_0$ that does not depend on $\lambda$ and is supported in a compact set $K_0 \subset \R^d$. Under Assumptions \ref{basic_ass} and \ref{unique_min}, we have $\Law(X_T^\lambda) \Rightarrow \delta_{x^*}$ as $\lambda \downarrow 0$, i.e., $\E \big[ \varphi(X_T^\lambda) \big] \to \varphi(x^*)$ for any bounded continuous function $\varphi$.
\end{corollary}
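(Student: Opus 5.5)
The plan is to condition on the initial state and reduce everything to Theorem \ref{conditional_low_temp} applied at time $t=0$.

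By Lemma \ref{kernel_lemma} and Corollary \ref{conditional_cor}, the optimal process started at $X_0^\lambda = x$ has terminal law $\eta_{\lambda,0,x}(y)\dd y$. Since $X_0^\lambda \sim \rho_0$ and the dynamics are Markov with transition kernel $p_\lambda^*$, disintegration with respect to the initial point gives, for every bounded continuous $\varphi$,
$$
\E\big[\varphi(X_T^\lambda)\big] = \int_{K_0} \bigg( \int_{\R^d} \varphi(y)\,\eta_{\lambda,0,x}(y)\dd y \bigg) \rho_0(\dd x).
$$
One point needs a brief justification: the optimally controlled process, i.e. the SDE driven by the feedback $\mathbf u_\lambda^*$ with initial law $\rho_0$, coincides in law with the Markov process built from $p_\lambda^*$ with initial law $\rho_0$. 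This is exactly the content of Lemma \ref{kernel_lemma}, namely that the kernel realizes the optimal controlled dynamics. I will invoke it directly; it is consistent with how Corollary \ref{exact_init} was proved.

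Next, the set $K = \{0\} \times K_0$ is a compact subset of $[0,T) \times \R^d$. Theorem \ref{conditional_low_temp} then yields
$$
\sup_{x \in K_0} \bigg| \int_{\R^d} \varphi(y)\,\eta_{\lambda,0,x}(y)\dd y - \varphi(x^*) \bigg| \to 0 \quad \text{as } \lambda \downarrow 0.
$$
Because $\rho_0$ is a probability measure supported in $K_0$, we obtain
$$
\big| \E[\varphi(X_T^\lambda)] - \varphi(x^*) \big| \leq \sup_{x \in K_0} \bigg| \int_{\R^d} \varphi\,\eta_{\lambda,0,x} - \varphi(x^*) \bigg| \to 0,
$$
which is the claimed narrow convergence of $\Law(X_T^\lambda)$ to $\delta_{x^*}$. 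This step is routine: uniformity over $K_0$ removes any need for dominated convergence. Alternatively, one could use Proposition \ref{conditional_mass_concentration} to get an exponential bound $\Prob(\Vert X_T^\lambda - x^*\Vert \geq r) \leq C e^{-c/\lambda}$.

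The only genuine obstacle is the identification of the law of the controlled SDE with the Doob-transform process. This requires well-posedness, at least in law, of the SDE with the locally Lipschitz drift $2\beta\nabla\log h_\lambda$. If a fully independent argument is wanted, I would first check local Lipschitz continuity of the drift, which holds since $h_\lambda$ is smooth and positive on $[0,T)\times\R^d$. I would then rule out explosion before time $T$ using the bound $\Vert\mathbf u_\lambda^*(t,x)\Vert \leq \Vert x - a_\lambda(t,x)\Vert/(T-t)$, together with control of $a_\lambda$ from Proposition \ref{barycenter_differential}. Finally, uniqueness of the martingale problem with generator $\mathcal L_t^*$ would identify the two laws on $[0,T)$, and the terminal law follows by continuity of paths as $t\uparrow T$. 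As stated, the paper already takes this identification as part of Lemma \ref{kernel_lemma}, so the proof reduces to the two displayed steps above.
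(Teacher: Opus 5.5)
Your proposal is correct and is essentially the paper's proof: you disintegrate $\Law(X_T^\lambda)$ as $\int_{K_0}\eta_{\lambda,0,x}\,\rho_0(\dd x)$ via Lemma \ref{kernel_lemma}, bound the error by the supremum over $x\in K_0$, and apply Theorem \ref{conditional_low_temp} on the compact set $\{0\}\times K_0$. Your optional well-posedness sketch is not needed, since the paper takes the identification from Lemma \ref{kernel_lemma}. If you did want it, note two things: monotonicity of $a_\lambda$ from Proposition \ref{barycenter_differential} only gives a lower bound on $\langle a_\lambda(t,x),x\rangle$, which is the wrong direction for non-explosion; the cleaner route is that $h_\lambda(s,W_s)/h_\lambda(0,x)$ is a positive martingale under Brownian motion, and its Girsanov change of measure yields the feedback SDE on $[0,T)$.
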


\begin{proof}
By conditioning on $X_0^\lambda = x$ and using Lemma \ref{kernel_lemma},
$\Law(X_T^\lambda)(\dd y) = \int_{K_0} \eta_{\lambda,0,x}(y)\rho_0(\dd x)$.
Hence, for every bounded continuous $\varphi$,
$$
\bigg\vert \E \big[ \varphi(X_T^\lambda) \big] - \varphi(x^*) \bigg\vert
\leq \sup_{x \in K_0} \bigg\vert \int_{\R^d} \varphi(y)\eta_{\lambda,0,x}(y)\dd y - \varphi(x^*) \bigg\vert.
$$
The right-hand side tends to $0$ by Theorem \ref{conditional_low_temp}, applied to the compact set $\{0\} \times K_0$.
\end{proof}

\begin{corollary}\label{objective_exact_init}
Let $X^\lambda$ be the optimal process started from the exact density $h_\lambda(0,\cdot)$. Under Assumptions \ref{basic_ass} and \ref{unique_min}, we have $\Law(X_T^\lambda) = \pi_\lambda(y)\dd y$ and
$$
\E \big[ f(X_T^\lambda) \big] = \int_{\R^d} f(y)\pi_\lambda(y)\dd y \to f_*
\qquad
\text{as } \lambda \downarrow 0.
$$
\end{corollary}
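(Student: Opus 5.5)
The first claim, $\Law(X_T^\lambda)=\pi_\lambda(y)\dd y$, follows from Corollary \ref{exact_init} applied at $s=T$. The identity $\E[f(X_T^\lambda)]=\int f\pi_\lambda$ is then just the expectation of $f$ under this law. This integral is finite because $f\pi_\lambda\in L^1$: since $f$ is bounded below, we have $f e^{-\alpha_\lambda f}\le C_\epsilon e^{-(\alpha_\lambda-\epsilon)f}$, and Assumption \ref{basic_ass} ii) applies to the right-hand side. So the real content is the Laplace-type limit $\int f\pi_\lambda\to f_*$. Lemma \ref{uniform_test} is not directly usable, because $f$ is unbounded, so I will prove the limit by hand, splitting the integral near and far from $x^*$.

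Fix $\varepsilon>0$. Since $\pi_\lambda$ is a probability density and $f\ge f_*$, we have $\int f\pi_\lambda-f_*=\int (f-f_*)\pi_\lambda\ge 0$, so only an upper bound is needed. Choose $r>0$ such that $f\le f_*+\varepsilon$ on $B_r(x^*)$. The near region contributes at most $\varepsilon$. For the far region $\{\Vert y-x^*\Vert\ge r\}$, set $g=f-f_*\ge0$. With $m_r$ as in Lemma \ref{gibbs_concentration}, we have $g\ge m_r-f_*>0$ there.

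For the far region I will reproduce the estimate of Lemma \ref{gibbs_concentration} with an extra factor $g$. The key ingredient is an elementary bound: for $\alpha\ge 2$ and $g\ge \delta_r:=m_r-f_*$,
$$
g\,e^{-\alpha g}=g e^{-g}\,e^{-(\alpha-1)g}\le g e^{-g}\,e^{-(\alpha-1)\delta_r}.
$$
Also $\int g e^{-g}\dd y= e^{f_*}\int (f-f_*)e^{-f}\dd y<\infty$, again by Assumption \ref{basic_ass} ii) and lower boundedness. The denominator is handled as in Lemma \ref{gibbs_concentration}: $M_\lambda e^{\alpha_\lambda f_*}\ge |B_{\rho}|e^{-\alpha_\lambda\varepsilon'}$, where $\varepsilon'<\delta_r$ and $\rho$ is chosen so that $g\le\varepsilon'$ on $B_\rho(x^*)$. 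Putting these together,
$$
\int_{\Vert y-x^*\Vert\ge r} g\,\pi_\lambda\dd y\le \frac{e^{\delta_r}\int g e^{-g}}{|B_\rho|}\,e^{-\alpha_\lambda(\delta_r-\varepsilon')}\to0 .
$$
Hence $\limsup_{\lambda\downarrow0}\big(\int f\pi_\lambda-f_*\big)\le\varepsilon$, and letting $\varepsilon\downarrow0$ concludes.

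The main obstacle is the unboundedness of $f$, which blocks a direct use of the narrow convergence in Lemma \ref{gibbs_concentration}. The step that handles it is the bound $g e^{-\alpha g}\le g e^{-g}e^{-(\alpha-1)\delta_r}$ together with the integrability of $f e^{-f}$. The latter comes from Assumption \ref{basic_ass} ii) because $f e^{-f}\le C e^{-f/2}$ for large $f$.
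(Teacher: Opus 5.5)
Your proposal is correct and follows the same route as the paper: the law identity comes from Corollary \ref{exact_init}, then you split near and far from $x^*$, and the far region is controlled as in Lemma \ref{gibbs_concentration} with the extra integrable factor $\vert f\vert e^{-f}$. Your bookkeeping is actually a bit more careful than the written proof, in two ways: centering with $g=f-f_*\geq 0$ removes any sign issue when $f_*+\varepsilon<0$, and choosing the denominator tolerance $\varepsilon'<m_r-f_*$ separately from the near-region $\varepsilon$ is what makes the exponent strictly negative (reusing the same $\varepsilon$, as the paper's text literally does, forces $m_r\leq f_*+\varepsilon$ by continuity on the sphere $\Vert y-x^*\Vert=r$).
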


\begin{proof}
The identity for the terminal law is given by Corollary \ref{exact_init}. Since $f \geq f_*$, it remains to prove the upper bound. Fix $\varepsilon > 0$. By continuity of $f$ at $x^*$, there exists $r > 0$ such that
$f(y) \leq f_* + \varepsilon$ whenever $\Vert y - x^* \Vert \leq r$.
Then
\begin{align*}
\int_{\R^d} f(y)\pi_\lambda(y)\dd y
&\leq (f_* + \varepsilon)\pi_\lambda\big( \Vert y - x^* \Vert \leq r \big) + \int_{\Vert y - x^* \Vert > r} f(y)\pi_\lambda(y)\dd y \\
&\leq f_* + \varepsilon + \frac{1}{M_\lambda}\int_{\Vert y - x^* \Vert > r} \vert f(y) \vert e^{-\alpha_\lambda f(y)}\dd y.
\end{align*}
By coercivity, $\vert f \vert e^{-f} \in L^1(\R^d)$. Setting $m_r = \inf_{\Vert y - x^* \Vert > r} f(y)$, we have $m_r > f_*$. Arguing as in the proof of Lemma \ref{gibbs_concentration}, there exists $\rho_r > 0$ such that $M_\lambda \geq \vert B_{\rho_r} \vert e^{-\alpha_\lambda(f_* + \varepsilon)}$, and therefore
$$
\frac{1}{M_\lambda}\int_{\Vert y - x^* \Vert > r} \vert f(y) \vert e^{-\alpha_\lambda f(y)}\dd y
\leq \frac{e^{-(\alpha_\lambda - 1)m_r}}{\vert B_{\rho_r} \vert e^{-\alpha_\lambda(f_* + \varepsilon)}}\int_{\R^d} \vert f(y) \vert e^{-f(y)}\dd y
\to 0.
$$
Thus
$$
\limsup_{\lambda \downarrow 0} \int_{\R^d} f(y)\pi_\lambda(y)\dd y \leq f_* + \varepsilon.
$$
Since $\varepsilon > 0$ is arbitrary and the reverse inequality is trivial, the result follows.
\end{proof}

Corollaries \ref{terminal_process_concentration} and \ref{objective_exact_init} give the process-level meaning of the low-temperature limit. The exact continuous-time optimizer selected by \eqref{criterion_intro} drives the terminal law toward the global minimizer as $\lambda \downarrow 0$; for the exact self-consistent initialization, even the expected terminal objective converges to the global minimum value.

\subsection{Nondegenerate case and Laplace asymptotics}

The previous results use only low-temperature concentration. When the minimizer is nondegenerate, one can sharpen them by a leading-order Laplace expansion.

\begin{assumption}\label{nondegenerate_ass}
In addition to Assumptions \ref{basic_ass} and \ref{unique_min}, $f \in C^4$ in a neighborhood of $x^*$ and $H_* = D^2 f(x^*)$ is positive definite.
\end{assumption}

\begin{proposition}\label{laplace_prop}
Set $\displaystyle C_* = \frac{(2\pi)^{d/2}}{\sqrt{\det H_*}}$ and recall 
 $\displaystyle\alpha_\lambda = \frac{T}{2\lambda\beta}$.
Under Assumption \ref{nondegenerate_ass}, we have
\begin{align}
\displaystyle \int_{\R^d} G_{T-t}(x-y)e^{-\alpha_\lambda f(y)}\dd y
&=
 \displaystyle e^{-\alpha_\lambda f_*}\alpha_\lambda^{-d/2} C_* G_{T-t}(x-x^*)\big( 1 + \mathrm{O}(\lambda) \big), \label{laplace_integral} \\
\displaystyle h_\lambda(t,x)
&=
 \displaystyle G_{T-t}(x-x^*)\big( 1 + \mathrm{O}(\lambda) \big), \label{laplace_h} \\[2mm]
\displaystyle a_\lambda(t,x)
&=
 \displaystyle x^* + \mathrm{O}(\lambda), \label{laplace_a} \\
\displaystyle \mathbf u_\lambda^*(t,x)
&=
\displaystyle - \frac{x - x^*}{T-t} + \mathrm{O}(\lambda), \label{laplace_u} \\
\displaystyle V_\lambda(t,x)
&=
\displaystyle f_* + \frac{2\lambda\beta}{T}\bigg[ \frac{d}{2}\log \alpha_\lambda - \log \Big( C_* G_{T-t}(x-x^*) \Big) \bigg] + \mathrm{O}(\lambda^2), \label{laplace_V}
\end{align}
as $\lambda \downarrow 0$, uniformly on any compact subset $K \Subset [0,T) \times \R^d$.
\end{proposition}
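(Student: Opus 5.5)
The plan is to prove the single Laplace estimate \eqref{laplace_integral}, together with its first-moment analogue, uniformly in $(t,x)\in K$, and then read off \eqref{laplace_h}--\eqref{laplace_V} by algebra. The key point is that the Gaussian weight $g_{t,x}(y)=G_{T-t}(x-y)$ is independent of $\lambda$. It is smooth in $y$, and since $T-t\geq\delta_K$ on $K$, its derivatives up to order two are bounded uniformly over $(t,x)\in K$. So I would treat it as a fixed smooth amplitude in a Laplace integral with large parameter $\alpha_\lambda=\frac{T}{2\lambda\beta}$. Note that $\alpha_\lambda^{-1}=\mathrm{O}(\lambda)$.

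\textbf{Step 1 (localization).} Fix $r>0$ so small that $f\in C^4(B_r(x^*))$ and $D^2f\geq \frac12 H_*$ on $B_r(x^*)$. Split each integral into $B_r(x^*)$ and its complement. On the complement, the argument of Lemma \ref{gibbs_concentration} / Proposition \ref{conditional_mass_concentration} gives a contribution at most $C e^{-\alpha_\lambda(m_r-f_*)}$, uniformly in $K$. The bound uses $g_{t,x}\leq(4\pi\beta\delta_K)^{-d/2}$, and $\Vert y\Vert g_{t,x}$ is likewise bounded. This is exponentially small relative to $e^{-\alpha_\lambda f_*}\alpha_\lambda^{-d/2}$.

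\textbf{Step 2 (local expansion).} On $B_r(x^*)$, substitute $y=x^*+z/\sqrt{\alpha_\lambda}$ and Taylor expand: $\alpha_\lambda(f(y)-f_*)=\frac12 z^\top H_* z+\alpha_\lambda^{-1/2}R_3(z)+\alpha_\lambda^{-1}R_4$, and $g_{t,x}(y)=g(x^*)+\alpha_\lambda^{-1/2}\nabla g(x^*)\cdot z+\mathrm{O}(\alpha_\lambda^{-1}\Vert z\Vert^2)$. Terms of order $\alpha_\lambda^{-1/2}$ are odd in $z$ against the Gaussian $e^{-\frac12 z^\top H_*z}$, so they vanish. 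The $C^4$ regularity controls the remainders, giving the relative error $1+\mathrm{O}(\alpha_\lambda^{-1})=1+\mathrm{O}(\lambda)$. To justify the expansion under the integral, I would use the domination $e^{-\alpha_\lambda(f-f_*)}\leq e^{-\frac14 z^\top H_* z}$ on $B_r$, which follows from the Hessian lower bound. Since $g_{t,x}(x^*)\geq c_K>0$, the error is relative, and this yields \eqref{laplace_integral}. The same computation with amplitude $y\,g_{t,x}(y)$ gives $\int y\,g_{t,x}e^{-\alpha_\lambda f}=e^{-\alpha_\lambda f_*}\alpha_\lambda^{-d/2}C_*\big(x^*g_{t,x}(x^*)+\mathrm{O}(\lambda)\big)$. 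Taking $g\equiv1$ gives $M_\lambda=e^{-\alpha_\lambda f_*}\alpha_\lambda^{-d/2}C_*(1+\mathrm{O}(\lambda))$.

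\textbf{Step 3 (consequences).} Dividing the two expansions gives \eqref{laplace_h}. Dividing the moment expansion by \eqref{laplace_integral} gives $a_\lambda=x^*+\mathrm{O}(\lambda)$, using the positivity of $g_{t,x}(x^*)$ on $K$. Then \eqref{laplace_u} follows from the barycentric formula \eqref{def_a_lambda}, since $(T-t)^{-1}\leq\delta_K^{-1}$. For \eqref{laplace_V}, take $-\frac{2\lambda\beta}{T}\log$ of \eqref{laplace_integral} in the form \eqref{direct_value_formula}. Since $\log(1+\mathrm{O}(\lambda))=\mathrm{O}(\lambda)$ and it is multiplied by $\frac{2\lambda\beta}{T}$, the error becomes $\mathrm{O}(\lambda^2)$, and the remaining terms are exactly the displayed bracket.

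The main obstacle is Step 2: obtaining a genuinely $\mathrm{O}(\alpha_\lambda^{-1})$ relative error, rather than $\mathrm{O}(\alpha_\lambda^{-1/2})$, uniformly over $K$. This needs the parity cancellation, control of the cubic term (for instance by expanding $e^{-\alpha_\lambda^{-1/2}R_3}$ to second order, whose square term is even), and amplitude derivatives bounded uniformly in $(t,x)$. I would handle it with the standard second-order Laplace lemma with explicit remainder constants, which depend only on $\Vert g\Vert_{C^2}$ and on $f$ near $x^*$.
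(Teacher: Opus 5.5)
Your proposal is correct and follows the paper's proof. The paper likewise treats $g_{t,x}(y)=G_{T-t}(x-y)$ as a $\lambda$-independent amplitude, uniformly bounded in $C^2$ near $x^*$, applies the uniform Laplace expansion with amplitudes $g_{t,x}$, $1$ and $y_i\,g_{t,x}$, and then divides and takes logarithms as in your Step 3; the only difference is that the paper cites the parameter-dependent Laplace method (de Bruijn, Wong), whereas you derive it yourself by localization, rescaling, and the parity cancellation that uses $f\in C^4$.
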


\begin{proof}
Set $g_{t,x}(y) = G_{T-t}(x-y)$.
Since $K \Subset [0,T) \times \R^d$, the family $\{ g_{t,x} \}_{(t,x) \in K}$ is uniformly bounded in $C^2(U)$ on some fixed neighborhood $U$ of $x^*$. The multidimensional Laplace method with parameter-dependent amplitudes (see, for example, \cite[Chapter 4]{deBruijn81} and \cite[Chapter IX]{Wong01}) therefore yields, uniformly on $K$,
$$
\int_{\R^d} g_{t,x}(y)e^{-\alpha_\lambda f(y)}\dd y
=
 e^{-\alpha_\lambda f_*}\alpha_\lambda^{-d/2} C_* \Big( g_{t,x}(x^*) + \mathrm{O}(\alpha_\lambda^{-1}) \Big).
$$
Since $g_{t,x}(x^*) = G_{T-t}(x-x^*)$ and $\alpha_\lambda^{-1} = \frac{2\lambda\beta}{T}$, this proves \eqref{laplace_integral}.
Applying the same expansion with amplitude identically equal to $1$ gives
$M_\lambda = e^{-\alpha_\lambda f_*}\alpha_\lambda^{-d/2} C_* \big( 1 + \mathrm{O}(\lambda) \big)$.
Dividing \eqref{laplace_integral} by this expression yields \eqref{laplace_h}. Applying the same argument once more with amplitudes $g_{t,x}^{(i)}(y) = y_i\, G_{T-t}(x-y)$ gives
$$
\int_{\R^d} y_i\, G_{T-t}(x-y)e^{-\alpha_\lambda f(y)}\dd y
= e^{-\alpha_\lambda f_*}\alpha_\lambda^{-d/2} C_* \Big( x_i^* G_{T-t}(x-x^*) + \mathrm{O}(\lambda) \Big),
$$
uniformly on $K$. Dividing by the denominator in \eqref{laplace_integral} gives \eqref{laplace_a}. The barycentric formula from Proposition \ref{three_forms} then yields \eqref{laplace_u}.
Finally, using \eqref{laplace_integral},
\begin{align*}
V_\lambda(t,x)
&=
- \frac{1}{\alpha_\lambda}\log \bigg( e^{-\alpha_\lambda f_*}\alpha_\lambda^{-d/2} C_* G_{T-t}(x-x^*)\big( 1 + \mathrm{O}(\lambda) \big) \bigg) \\
&=
 f_* + \frac{1}{\alpha_\lambda}\bigg[ \frac{d}{2}\log \alpha_\lambda - \log \Big( C_* G_{T-t}(x-x^*) \Big) \bigg] - \frac{1}{\alpha_\lambda}\log \big( 1 + \mathrm{O}(\lambda) \big).
\end{align*}
Since $\displaystyle \alpha_\lambda^{-1} = \frac{2\lambda\beta}{T}$ and $\log \big( 1 + \mathrm{O}(\lambda) \big) = \mathrm{O}(\lambda)$, the last term is $\mathrm{O}(\lambda^2)$, which proves \eqref{laplace_V}.
\end{proof}

Proposition \ref{laplace_prop} sharpens the low-temperature theory in two ways. First, the drift correction is of order $\lambda$ on compact subsets away from terminal time. Second, the value function has an explicit first correction of size $\lambda \log \big( \frac{1}{\lambda} \big)$ coming from the factor $\alpha_\lambda^{-d/2}$. This is substantially more informative than a mere convergence statement.

\begin{corollary}[Asymptotic covariance of the conditional terminal law]\label{laplace_covariance}
Under Assumption \ref{nondegenerate_ass}, we have
$$
\Covtx
= \frac{2\lambda\beta}{T}H_*^{-1} + \mathrm{O}(\lambda^2),
$$
as $\lambda \downarrow 0$, uniformly for $(t,x) \in K$, for any compact subset $K \Subset [0,T) \times \R^d$, where 
\begin{equation}\label{eq:covariance}
\Covtx = \int_{\R^d} (y - a_\lambda(t,x))(y - a_\lambda(t,x))^\top \eta_{\lambda,t,x}(y)\dd y.
\end{equation}
\end{corollary}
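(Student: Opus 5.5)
The plan is to reduce the covariance to a second moment centered at $x^*$, and then apply the same uniform Laplace method used in Proposition \ref{laplace_prop}, carried to one order beyond leading. Write $\eta=\eta_{\lambda,t,x}$. The centered second moment decomposes exactly as
$$
\Covtx = \int_{\R^d}(y-x^*)(y-x^*)^\top \eta(y)\dd y \;-\; \big(a_\lambda(t,x)-x^*\big)\big(a_\lambda(t,x)-x^*\big)^\top .
$$
By \eqref{laplace_a}, the second term is $\mathrm{O}(\lambda^2)$ uniformly on $K$. It therefore suffices to show that the first term equals $\alpha_\lambda^{-1}H_*^{-1}+\mathrm{O}(\lambda^2)$, since $\alpha_\lambda^{-1}=\frac{2\lambda\beta}{T}$.

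The first term is a ratio $N_\lambda(t,x)/D_\lambda(t,x)$. The numerator is $N_\lambda=\int (y-x^*)(y-x^*)^\top g_{t,x}(y)e^{-\alpha_\lambda f(y)}\dd y$ with $g_{t,x}(y)=G_{T-t}(x-y)$. The denominator $D_\lambda$ is given by \eqref{laplace_integral}, namely $e^{-\alpha_\lambda f_*}\alpha_\lambda^{-d/2}C_*\,g_{t,x}(x^*)(1+\mathrm{O}(\lambda))$.

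\emph{Tail of the numerator.} I split $N_\lambda$ into a small ball $B_r(x^*)$ and its complement. On $K$ we have $T-t\ge\delta_K$, so $\|y-x^*\|^2 g_{t,x}(y)$ is uniformly bounded in $y$ and $(t,x)\in K$. Then the argument of Proposition \ref{conditional_mass_concentration} shows that the complement contributes $e^{-\alpha_\lambda f_*}\cdot\mathrm{O}(e^{-c/\lambda})$, which is negligible.

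\emph{Local part of the numerator.} Inside $B_r(x^*)$ I use the Morse-type rescaling $y=x^*+z/\sqrt{\alpha_\lambda}$ and Taylor-expand $f$ to fourth order, which is allowed since $f\in C^4$ near $x^*$. This gives
$$
\alpha_\lambda\big(f(y)-f_*\big)=\tfrac12 z^\top H_*z+\alpha_\lambda^{-1/2}c_3(z)+\mathrm{O}\big(\alpha_\lambda^{-1}\|z\|^4\big),
$$
with $c_3$ a cubic form. I also expand $g_{t,x}(y)=g_{t,x}(x^*)+\alpha_\lambda^{-1/2}\nabla g_{t,x}(x^*)\cdot z+\mathrm{O}(\alpha_\lambda^{-1}\|z\|^2)$; the remainder is uniform on $K$ because $\{g_{t,x}\}$ is bounded in $C^2$ near $x^*$. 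The amplitude $(y-x^*)(y-x^*)^\top=\alpha_\lambda^{-1}zz^\top$ already carries one factor $\alpha_\lambda^{-1}$.

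\emph{Order-by-order identification.} Against the Gaussian weight $e^{-\frac12 z^\top H_*z}$, the terms come in as follows.
\begin{itemize}
\item At order $\alpha_\lambda^{-1}$, the only term is $g_{t,x}(x^*)\int zz^\top e^{-\frac12 z^\top H_*z}\dd z=g_{t,x}(x^*)\,C_*H_*^{-1}$.
\item At order $\alpha_\lambda^{-3/2}$, the terms are $\nabla g\cdot z\,zz^\top$ and $c_3(z)zz^\top$. Both are odd in $z$, so their Gaussian integrals vanish.
\item The remaining terms are $\mathrm{O}(\alpha_\lambda^{-2})$.
\end{itemize}
Hence $N_\lambda=e^{-\alpha_\lambda f_*}\alpha_\lambda^{-d/2}C_*\big(\alpha_\lambda^{-1}g_{t,x}(x^*)H_*^{-1}+\mathrm{O}(\alpha_\lambda^{-2})\big)$, uniformly on $K$.

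\emph{Conclusion.} I divide $N_\lambda$ by $D_\lambda$, using that $g_{t,x}(x^*)=G_{T-t}(x-x^*)$ is bounded below on $K$. The factor $(1+\mathrm{O}(\lambda))^{-1}$ multiplies an $\mathrm{O}(\lambda)$ quantity, so the quotient is $\alpha_\lambda^{-1}H_*^{-1}+\mathrm{O}(\lambda^2)$, which gives the claim.

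The main obstacle is making the second-order step rigorous and uniform. This means justifying the cancellation of the $\alpha_\lambda^{-3/2}$ terms, and controlling the Taylor remainders inside the exponential (for instance via $|e^{-s}-1+s|\le s^2e^{|s|}$ on $\|z\|\le r\sqrt{\alpha_\lambda}$, with $r$ small so that $f(y)-f_*\ge\frac14 z^\top H_*z/\alpha_\lambda$ there). These must hold uniformly in $(t,x)\in K$ using only $C^2$ bounds on the amplitudes. If this becomes cumbersome, I would instead cite the full Laplace expansion with parameter-dependent amplitudes from \cite{Wong01}: the coefficient of $\alpha_\lambda^{-d/2-1}$ for an amplitude vanishing to second order is $\frac12 C_*\operatorname{tr}(H_*^{-1}D^2\psi(x^*))$, and here $D^2$ of $(y-x^*)_i(y-x^*)_j\,g$ at $x^*$ equals $g(x^*)(e_ie_j^\top+e_je_i^\top)$.
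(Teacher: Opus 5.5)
Your argument is correct, but it is organized differently from the paper's proof. The paper does not separate off the mean, and it does not treat the heat kernel as an amplitude. Instead, via Proposition \ref{penalized_gibbs}, it reads $\eta_{\lambda,t,x}$ as a Gibbs law at inverse temperature $\alpha_\lambda$ for the $\lambda$-dependent energy $\mathcal E_{\lambda,t,x}$. It computes $D_y^2\mathcal E_{\lambda,t,x}(x^*)=H_*+\frac{\lambda}{T(T-t)}I_d$, quotes the standard second-moment Laplace formula $\alpha_\lambda^{-1}\big(D_y^2\mathcal E_{\lambda,t,x}(x^*)\big)^{-1}+\mathrm{O}(\alpha_\lambda^{-2})$, and expands the inverse. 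That takes two lines and keeps the proximal-energy picture in view. However, it leaves to the citation two facts: this phase depends on $\lambda$, and its minimizer lies $\mathrm{O}(\lambda)$ away from $x^*$. Moreover, the correction $\frac{\lambda}{T(T-t)}I_d$ it carries only moves the covariance at order $\lambda^2$, i.e.\ inside the error term. Your version uses, in effect, that $\alpha_\lambda\frac{\lambda}{2T(T-t)}\Vert y-x\Vert^2=\frac{\Vert y-x\Vert^2}{4\beta(T-t)}$ does not depend on $\lambda$. So the phase stays the fixed function $f$, and uniformity on $K$ reduces to the same $C^2$ bounds on $g_{t,x}$ already used for \eqref{laplace_integral}. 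The exact decomposition about $x^*$, together with \eqref{laplace_a}, disposes of the mean shift at cost $\mathrm{O}(\lambda^2)$. Your explicit expansion also shows exactly where $f\in C^4$ and the odd-symmetry cancellation at order $\alpha_\lambda^{-3/2}$ are needed to reach $\mathrm{O}(\lambda^2)$ rather than $\mathrm{O}(\lambda^{3/2})$. The remainder bookkeeping is sound as you sketch it. For $r$ small, $\vert s\vert\leq Cr\Vert z\Vert^2\leq\frac14 z^\top H_*z$ on $\Vert z\Vert\leq r\sqrt{\alpha_\lambda}$, so $\Vert z\Vert^2 s^2e^{\vert s\vert}e^{-\frac12 z^\top H_*z}$ is integrable with an $\mathrm{O}(\alpha_\lambda^{-1})$ bound. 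The cross terms involving $\nabla g_{t,x}(x^*)\cdot z$ and the $C^2$ remainder of $g_{t,x}$ are handled the same way. Because this direct computation uses exactly the regularity granted by Assumption \ref{nondegenerate_ass}, it is preferable to your fallback of citing a general two-term expansion.
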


\begin{proof}
The penalized energy \eqref{penalized_energy} satisfies $\displaystyle D_y^2 \mathcal E_{\lambda,t,x}(x^*) = H_* + \frac{\lambda}{T(T-t)}I_d$. Since $\eta_{\lambda,t,x}$ is a Gibbs law at inverse temperature $\alpha_\lambda$ for this energy, the standard Laplace argument for second moments (see, for example, \cite[Chapter~4]{deBruijn81}) gives
$$
\Covtx
=
\frac{1}{\alpha_\lambda}\bigg( H_* + \frac{\lambda}{T(T-t)}I_d \bigg)^{-1} + \mathrm{O}(\alpha_\lambda^{-2}).
$$
Since $\displaystyle \alpha_\lambda^{-1} = \frac{2\lambda\beta}{T}$, expanding the inverse matrix to first order gives the result uniformly on $K$, because $T-t \geq \delta_K > 0$ on $K$.
\end{proof}

Thus, in the nondegenerate regime, the conditional terminal law is approximately Gaussian with mean $x^* + \mathrm{O}(\lambda)$ and covariance $(2\lambda\beta/T)H_*^{-1}$. The directions in which $f$ is flatter at $x^*$ (small eigenvalues of $H_*$) produce larger spread in the terminal law, as expected.

\medskip

\paragraph{\bf Low-temperature behavior of the OSL coefficient.}
As a direct consequence of Corollary \ref{laplace_covariance}, we obtain the low-temperature behavior of the trace coefficient defined in \eqref{eq:Ceta}. For $t$ in a compact subset of $[0,T)$ and $(x_1,x_2)$ in a compact subset of $\R^d\times\R^d$, the segment $x(\theta)=x_1+\theta(x_2-x_1)$ stays bounded, and Corollary \ref{laplace_covariance} gives
\begin{equation}\label{Ceta_laplace}
C_{\eta_\lambda}(t,x_1,x_2)
=\frac{2\lambda\beta}{T}\operatorname{tr}(H_*^{-1})+\mathrm{O}(\lambda^2),
\end{equation}
uniformly in that set. Equivalently, since $T-t$ is bounded away from zero on such compact subsets,
$$
\int_0^1\big(d-(T-t)\Delta\Phi_\lambda(t,x(\theta))\big)\dd\theta
=
\frac{\lambda}{T(T-t)}\operatorname{tr}(H_*^{-1})+\mathrm{O}(\lambda^2).
$$
Thus the factorized trace OSL coefficient is of order $\lambda$ on compact subsets away from the terminal time. Exponential estimates concern the tails away from neighborhoods of $x^*$, whereas the full covariance is generically of order $\lambda$ in the nondegenerate Laplace regime.

\section{Gradient-free discretization}\label{numerics_sec}
The continuous finite-horizon theory produces a natural gradient-free drift formula \eqref{def_a_lambda}, which explains the underlying exploration-exploitation mechanism. The purpose of this section is not to develop a full complexity theory. It is simply to show how the exact barycentric representation \eqref{def_a_lambda} suggests a practical gradient-free discretization and to indicate how the parameters $T$, $\lambda$, $N$, and $h$ should be interpreted.

Suppose that at time $t$ the current position is $x$. To approximate the barycenter $a_\lambda(t,x)$ from \eqref{def_a_lambda}, one may draw $Y^1,\dots,Y^N$ independently from the Gaussian law $\mathcal N(x,2\beta(T-t)I_d)$ and use the importance-sampling estimator
\begin{equation}\label{mc_barycenter}
a_{\lambda,N}(t,x) = \frac{\sum_{j=1}^N Y^j \exp \big( - \alpha_\lambda f(Y^j) \big)}{\sum_{j=1}^N \exp \big( - \alpha_\lambda f(Y^j) \big)}.
\end{equation}
This yields the approximate drift
\begin{equation}\label{approx_drift}
\mathbf u_{\lambda,N}(t,x) = - \frac{x - a_{\lambda,N}(t,x)}{T-t}.
\end{equation}
A simple Euler-Maruyama discretization (see \cite{KloedenPlaten92}) on the grid $t_k = kh$, stopped at $t_K = T - \delta$ with $\delta > 0$, reads
\begin{equation}\label{em_scheme}
X_{k+1} = X_k + h\mathbf u_{\lambda,N}(t_k,X_k) + \sqrt{2\beta h}\xi_k,
\qquad
\xi_k \sim \mathcal N(0,I_d),
\qquad
k = 0,\dots,K-1.
\end{equation}
The parameters have the following interpretation:
\begin{enumerate}[label=\roman*)]
\item $T$ fixes the horizon and therefore the search radius $\sqrt{2\beta(T-t)}$ at time $t$.
\item $\lambda$ fixes the Gibbs concentration through $\alpha_\lambda = \frac{T}{2\lambda\beta}$: small $\lambda$ means low temperature and stronger concentration near minimizers.
\item $N$ controls the Monte Carlo approximation of the barycenter $a_\lambda(t,x)$ in \eqref{mc_barycenter}.
\item $h$ is the time discretization step.
\end{enumerate}
A satisfactory non-asymptotic theorem for \eqref{em_scheme} should combine the time discretization error, the Monte Carlo variance, the near-terminal stiffness coming from the factor $(T-t)^{-1}$, the low-temperature covariance scaling in \eqref{Ceta_laplace} and a fair comparison at equal computational budget with other global methods. A detailed description of the computational algorithm along these lines is left to a follow-up work. 

Figure \ref{fig:shrinking} visualizes that mechanism on the two-dimensional Ackley landscape. The colormap represents the conditional terminal density $\eta_{\lambda,t,x}$, the cyan dot is the current position $x$, the red star is the minimizer, and the white arrow is the drift $\mathbf u_\lambda^*(t,x)$. As $t$ increases, the conditional terminal law shrinks and the drift passes from a genuinely global barycentric direction to a local direction. We stress that this figure is only qualitative: the Ackley function is used as a familiar visual benchmark, but it does not exactly satisfy Assumption \ref{basic_ass}.

\begin{figure}[h]
\centering
\includegraphics[width=\textwidth]{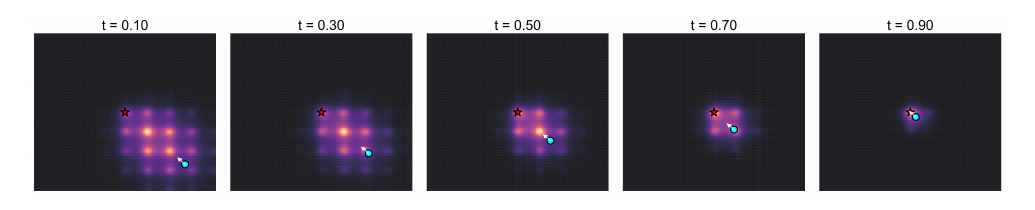}
\caption{Visualization of the conditional terminal density $\eta_{\lambda,t,x}$ and of the barycentric drift $\mathbf u_\lambda^*(t,x)$ at several times. Early times correspond to broad terminal averaging, while late times recover a local direction.}
\label{fig:shrinking}
\end{figure}

\section{Conclusion and further comments}\label{further_comments_sec}

This paper studies the finite-horizon criterion \eqref{criterion_intro} as an optimization problem over drift fields. Starting from the deterministic warm-up, we solved the corresponding stochastic control problem on $\R^d$, identified the exact Markov feedback, and reorganized it around the conditional terminal law of the optimal process. This law is the Gibbs measure of the penalized energy \eqref{penalized_energy}; it yields the potential, averaged-gradient, and barycentric representations \eqref{potential_form}, \eqref{gradient_and_barycenter} and \eqref{def_a_lambda}. 

We then analyzed two asymptotic regimes that are directly relevant for optimization. As $t \uparrow T$, the drift reconnects with a scaled gradient descent. As $\lambda \downarrow 0$, the conditional terminal law, the optimal process, and the drift select the unique global minimizer, even when $f$ has nonglobal local minima; in the nondegenerate case we also obtained leading-order Laplace asymptotics for the drift, the value function, and the covariance of the conditional terminal law. The two regimes are generically non-commutative (Remark \ref{non_commute}), which reflects the tension between global exploration and local exploitation.

Finally, we recorded a simple gradient-free discretization suggested by the exact barycentric formula.

The Hopf-Cole closed form and its relation with linearly solvable control, reciprocal diffusions, mean-field games, and recent Hamilton-Jacobi or Wasserstein-proximal viewpoints are classical. The present paper focuses on a narrower aspect which, we believe, enables a sharper message: it isolates the optimization content of that classical formula in a coherent framework on $\R^d$; it places the conditional terminal law at the center of the discussion; it makes the endogenous search radius $\sqrt{2\beta(T-t)}$ and temperature $\displaystyle \frac{2\lambda\beta}{T}$ completely explicit; and it states the low-temperature asymptotics in a form directly relevant for global optimization. Thus, the novelty is not a new Hopf-Cole transform, but the systematic extraction of the optimizer selected by this transform and the precise global-selection interpretation of its conditional terminal law. In particular, the paper highlights that the exact continuous-time criterion selects the global minimizer asymptotically under uniqueness of that minimizer, without any assumption excluding non-global local minima.

We conclude with a few further comments and perspectives.

\medskip
\paragraph{\bf Bounded domains and reflection.}
Throughout the paper we have deliberately worked on the whole space $\R^d$. If one wishes instead to impose a hard state constraint on a smooth bounded domain $\Omega \subset \R^d$, the coherent way to do so is to replace the free diffusion on $\R^d$ by a reflected diffusion
$$
X_t = x_0 + \int_0^t u_s \dd s + \sqrt{2\beta}B_t - \int_0^t n(X_s)\dd K_s,
$$
where $n$ is the outward unit normal and $K_t$ is the boundary local time. At the PDE level, the corresponding Fokker-Planck equation carries a no-flux condition and the backward equation for $h_\lambda$ becomes the backward heat equation with Neumann boundary condition. In that reflected setting, a counterpart of the present analysis should be possible with the Neumann heat semigroup. The local terminal cancellation discussed in Remark \ref{terminal_semiconcavity_rem} should then be understood only away from boundary effects. Near the boundary, reflected heat-kernel contributions and the geometry of the boundary may alter the Riccati/caustic picture, and the covariance factorization used in the whole-space proof must be replaced by its reflected analogue. By contrast, declaring $f = +\infty$ outside $\Omega$ while still evolving a free diffusion on $\R^d$ mixes two different models and is not a satisfactory way to encode state constraints.

\medskip
\paragraph{\bf Several global minimizers.}
If $f$ has several global minimizers, the Gibbs measures $\pi_\lambda(y)\dd y$ still concentrate on the set $\operatorname*{argmin} f$, but the limiting distribution need not be a single Dirac mass. Accordingly, the barycenter $a_\lambda(t,x)$ may depend in a nontrivial way on the Gaussian factor $G_{T-t}(x-\cdot)$ and on the geometry of the minimizer set. In that regime one should not expect the single affine limit $\displaystyle - \frac{x - x^*}{T-t}$ without additional symmetry-breaking assumptions.

\medskip
\paragraph{\bf Unique global minimizer and several local minima.}
This is the regime covered by Section \ref{asymp_sec}. The only structural assumption there is the uniqueness of the global minimizer; any number of nonglobal local minima or saddle points is allowed. The exponential concentration estimate of Proposition \ref{conditional_mass_concentration} shows that, at the level of the exact continuous-time criterion, those local minima do not affect the low-temperature selection mechanism: the terminal law still concentrates near $x^*$. What the paper does \emph{not} prove is a finite-$\lambda$ or complexity guarantee for a practical algorithm. For fixed small $\lambda$ and for the Monte Carlo discretization of Section \ref{numerics_sec}, metastability, sampling error, and near-terminal stiffness may still matter. Thus the result is a global-selection theorem for the exact drift in the asymptotic regime $\lambda \downarrow 0$, not yet a full computational-complexity statement.

\medskip
\paragraph{\bf Other running costs and control constraints.}
The quadratic running cost is special because it leads to the Hopf-Cole linearization of the Hamilton-Jacobi equation \eqref{hjb} via the heat equation \eqref{h_lambda_heat}, and therefore to the explicit formulas of Theorem \ref{main_value}, Proposition \ref{penalized_gibbs}, and Proposition \ref{three_forms}; see also \cite{Kappen05,Todorov06,Todorov09}. Replacing it by an $L^1$ running cost, or imposing a hard bound $\Vert u_t \Vert \leq 1$, leads to a completely different optimal control problem. From the Hamiltonian point of view, this is where a Pontryagin maximum principle analysis should become highly relevant. In the $L^1$ case the minimizing Hamiltonian is no longer quadratic and one expects nonsmooth Hamilton-Jacobi equations and possibly sparse drifts. Under the hard bound $\Vert u_t \Vert \leq 1$, the minimizing controls should saturate the constraint on the region where the adjoint gradient is nonzero, leading to bang-bang type feedbacks. These variants are mathematically natural from the optimization viewpoint, but they fall outside the linearly solvable framework of the present paper.


\begin{thebibliography}{99}

\bibitem{AndrieuChopinFincatoGerber24}
C. Andrieu, N. Chopin, E. Fincato, and M. Gerber,
\emph{Gradient-free optimization via integration},
arXiv:2408.00888, 2024.


\bibitem{AgrachevSachkov04}
A. A. Agrachev and Y. L. Sachkov,
\emph{Control Theory from the Geometric Viewpoint},
Encyclopaedia of Mathematical Sciences \textbf{87}, Springer, 2004.

\bibitem{BardiCapuzzoDolcetta97}
M. Bardi and I. Capuzzo-Dolcetta,
\emph{Optimal Control and Viscosity Solutions of Hamilton-Jacobi-Bellman Equations},
Birkh\"auser, Boston, 1997.

\bibitem{BonnardCaillauTrelat07}
B. Bonnard, J.-B. Caillau, and E. Tr\'elat,
Second order optimality conditions in the smooth case and applications in optimal control,
\emph{ESAIM Control Optim. Calc. Var.} \textbf{13} (2007), 207-236.


\bibitem{BensoussanFrehseYam13}
A. Bensoussan, J. Frehse, and P. Yam,
\emph{Mean Field Games and Mean Field Type Control Theory},
Springer, 2013.


\bibitem{CannarsaSinestrari04}
P. Cannarsa and C. Sinestrari,
\emph{Semiconcave Functions, Hamilton-Jacobi Equations, and Optimal Control},
Progress in Nonlinear Differential Equations and Their Applications \textbf{58}, Birkh\"auser, Boston, 2004.


\bibitem{CarrilloChoiTotzeckTse18}
J. A. Carrillo, Y.-P. Choi, C. Totzeck, and O. Tse,
An analytical framework for consensus-based global optimization method,
\emph{Math. Models Methods Appl. Sci.} \textbf{28} (2018), 1037-1066.

\bibitem{DaiJiaoKangLuYang23}
Y. Dai, Y. Jiao, L. Kang, X. Lu, and J. Z. Yang,
Global optimization via Schr\"odinger-F\"ollmer diffusion,
\emph{SIAM J. Control Optim.} \textbf{61} (2023), 2953-2980.

\bibitem{DaiPra91}
P. Dai Pra,
A stochastic control approach to reciprocal diffusion processes,
\emph{Appl. Math. Optim.} \textbf{23} (1991), 313-329.

\bibitem{deBruijn81}
N. G. de Bruijn,
\emph{Asymptotic Methods in Analysis},
third edition, Dover, 1981.

\bibitem{DemboZeitouni}
A. Dembo and O. Zeitouni,
\emph{Large Deviations Techniques and Applications},
Springer, 2010.

\bibitem{FlemingSoner06}
W. H. Fleming and H. M. Soner,
\emph{Controlled Markov Processes and Viscosity Solutions},
second edition, Springer, 2006.

\bibitem{Follmer85}
H. F\"ollmer,
An entropy approach to the time reversal of diffusion processes,
in \emph{Stochastic Differential Systems, Filtering and Control},
Lecture Notes in Control and Information Sciences \textbf{69},
Springer, 1985, pp.~156-163.

\bibitem{FornasierKlockRiedl24}
M. Fornasier, T. Klock, and K. Riedl,
Consensus-based optimization methods converge globally,
\emph{SIAM J. Optim.} \textbf{34} (2024), 2973-3004.

\bibitem{HeatonFungOsher24}
H. Heaton, S. Wu Fung, and S. Osher,
Global solutions to nonconvex problems by evolution of Hamilton-Jacobi PDEs,
\emph{Commun. Appl. Math. Comput.} \textbf{6} (2024), 790-810.

\bibitem{HuangJiaoKangLiaoLiuLiu21}
J. Huang, Y. Jiao, L. Kang, X. Liao, J. Liu, and Y. Liu,
\emph{Schr\"odinger-F\"ollmer sampler: sampling without ergodicity},
arXiv:2106.10880, 2021.

\bibitem{HuangKaliseKouhkouh25}
Y. Huang, D. Kalise, and H. Kouhkouh,
Non-convex global optimization as an optimal stabilization problem: dynamical properties,
\emph{arXiv preprint} arXiv:2511.10815, 2025.

\bibitem{Hwang80}
C.-R. Hwang,
Laplace's method revisited: weak convergence of probability measures,
\emph{Ann. Probab.} \textbf{8} (1980), 1177-1182.

\bibitem{IwakiriWangItoTakeda22}
H. Iwakiri, Y. Wang, S. Ito, and A. Takeda,
Single loop Gaussian homotopy method for non-convex optimization,
in \emph{Advances in Neural Information Processing Systems} \textbf{35},
2022, pp.~7065-7076.

\bibitem{Kappen05}
H. J. Kappen,
Path integrals and symmetry breaking for optimal control theory,
\emph{J. Stat. Mech. Theory Exp.} 2005, P11011.

\bibitem{KloedenPlaten92}
P. E. Kloeden and E. Platen,
\emph{Numerical Solution of Stochastic Differential Equations},
Springer, 1992.

\bibitem{kruzhkov1964cauchy}
S. N. Kruzhkov,
The Cauchy problem in the large for nonlinear equations and for certain quasilinear systems of first order with several variables,
\emph{Soviet Math. Dokl.} \textbf{5} (1964), 493-496.
 
\bibitem{LasryLions07}
J.-M. Lasry and P.-L. Lions,
Mean field games,
\emph{Japan. J. Math.} \textbf{2} (2007), 229-260.

\bibitem{LiLiuOsher23}
W. Li, S. Liu, and S. Osher,
A kernel formula for regularized Wasserstein proximal operators,
\emph{Res. Math. Sci.} \textbf{10} (2023), Article 43.

\bibitem{LiYong95}
X. Li and J. Yong,
\emph{Optimal Control Theory for Infinite Dimensional Systems},
Birkh\"auser, Boston, 1995.

\ifx
\bibitem{lin2001}
C.-T. Lin and E. Tadmor,
$L^1$-stability and error estimates for approximate Hamilton-Jacobi solutions,
\emph{Numer. Math.} \textbf{87} (2001), 701-735.
 \fi
 
\bibitem{lions1982generalized}
P.-L. Lions,
\emph{Generalized Solutions of Hamilton-Jacobi Equations},
Research Notes in Mathematics \textbf{69}, Pitman, 1982.

\bibitem{OsherHeatonFung23}
S. Osher, H. Heaton, and S. Wu Fung,
A Hamilton-Jacobi-based proximal operator,
\emph{Proc. Natl. Acad. Sci. USA} \textbf{120} (2023), e2220469120.

\bibitem{Pavliotis14}
G. A. Pavliotis,
\emph{Stochastic Processes and Applications: Diffusion Processes, the Fokker-Planck and Langevin Equations},
Springer, 2014.

\bibitem{PontryaginBook}
L. S. Pontryagin, V. G. Boltyanskii, R. V. Gamkrelidze, and E. F. Mishchenko,
\emph{The Mathematical Theory of Optimal Processes},
Interscience, New York, 1962.

\bibitem{Risken96}
H. Risken,
\emph{The Fokker-Planck Equation: Methods of Solution and Applications},
second edition, Springer, 1996.

\bibitem{Todorov06}
E. Todorov,
Linearly-solvable Markov decision problems,
in \emph{Advances in Neural Information Processing Systems} \textbf{19},
2006, pp.~1369-1376.

\bibitem{Todorov09}
E. Todorov,
Efficient computation of optimal actions,
\emph{Proc. Natl. Acad. Sci. USA} \textbf{106} (2009), 11478-11483.

\bibitem{TrelatBook}
E. Tr\'elat,
\emph{Control in Finite and Infinite Dimension},
SpringerBriefs in PDEs and Data Science, Springer, 2024.

\bibitem{TzenRaginsky19}
B. Tzen and M. Raginsky,
Theoretical guarantees for sampling and inference in generative models with latent diffusions,
in \emph{Proceedings of Machine Learning Research} \textbf{99},
2019, pp.~3084-3114.

\bibitem{Wong01}
R. Wong,
\emph{Asymptotic Approximations of Integrals},
SIAM, 2001.

\bibitem{YongZhou99}
J. Yong and X. Y. Zhou,
\emph{Stochastic Controls: Hamiltonian Systems and HJB Equations},
Springer, 1999.

\bibitem{ZhangChen22}
Q. Zhang and Y. Chen,
Path integral sampler: a stochastic control approach for sampling,
in \emph{International Conference on Learning Representations},
2022.


\end{thebibliography}
\end{document}